\documentclass[hidelinks,onefignum,onetabnum]{siamart251216}

\usepackage{graphicx}%
\usepackage{multirow}%
\usepackage{amsmath,amssymb,amsfonts}%

\usepackage{mathrsfs}%
\usepackage{booktabs}%
\usepackage{float}
\usepackage{multirow}
\usepackage{graphicx,epsfig}
\usepackage{color}

\usepackage{epsfig,subfigure,epstopdf}
\usepackage{graphicx,euscript}

\usepackage{multirow,array}
\usepackage[draft,commentmarkup=margin]{changes}
\usepackage{cleveref}

\def\cM{\mathcal{M}}
\def\cN{\mathcal{N}}
\def\vu{\vec{u}}
\def\vnu{\vec{\nu}}
\def\hnu{\hat{\nu}}
\def\vx{\vec{x}}
\def\gM{\nabla_\cM}

\def\gGamma{\nabla_{\Gamma(t)}}

\def\gMh{\nabla_{\cM_h}}

\def\vX{\vec{X}}

\def\hX{\hat{X}}

\def\vxi{\vec{\xi}}

\def\Id{{\rm Id}}

\def\cI{\mathcal{I}}

\def\half{{1/2}}

\def\vn{\vec{n}}
\def\vw{\vec{w}}

\def\hnu{\hat{{\nu}}}

\def\mO{\mathcal{O}}
\def\mbfu{\mathbf{u}}
\def\mbfz{\mathbf{z}}

\def\hGam{\hat{\Gamma}_h^{m+\half}}

\def\mJ{\mathcal{J}}
\def\mS{\mathcal{S}}

\def\tn{\bar{n}}
\def\vz{\vec{z}}
\def\tr{{\rm{tr}}}
\def\d{{\rm{d}}}
\def\tT{\tilde{T}}
\def\tp{\tilde{p}}

\graphicspath{{figs/SDF/cuboid/}{figs/SDF/H/}}

\newsiamremark{remark}{Remark}

\theoremstyle{plain}
\newtheorem{example}{Example}[section]%

\allowdisplaybreaks[4] 
\headers{A Second-order Structure-preserving PFEM}{B.~Duan and Z.~Yang}
\title{A Second-order Structure-preserving Parametric FEM for Surface Evolution
	\thanks{Submitted to the editors \today.
\funding{The research of the first author was partially supported by Shenzhen Science and Technology Program (grant No.~JCYJ20250604173044021) and the National Natural Science Foundation of China (grant No.~12201418).
        The research of the second author was partially supported by the Fundamental Research Funds for the Central Universities (grant No.~G2025KY05239) and the National Natural Science Foundation of China (grant No.~12401545).}}
}
\author{Beiping Duan\thanks{%
    MSU-BIT-SMBU Joint Research Center of  Applied Mathematics, 
    Shenzhen MSU-BIT University, Shenzhen  518172, P.R. China
    (\email{duanbeiping@smbu.edu.cn}).}
\and Zongze Yang\thanks{%
    School of Mathematics and Statistics,
    Northwestern Polytechnical University, Xi'an, Shaanxi, 710072, P.R. China
    (\email{zongze.yang@nwpu.edu.cn}).}}

\ifpdf
\hypersetup{
  pdftitle={A Second-order Structure-preserving Parametric FEM for surface evolution},
  pdfauthor={Beiping Duan and Zongze Yang}
}
\fi

\begin{document}

\maketitle

\begin{abstract}
    In this paper, we propose a second-order-in-time, structure-preserving, and mesh-robust parametric finite element method for surface diffusion and the volume-preserving mean curvature flow. We first reformulate the original evolution equations into new systems in which the tangential motion is governed by a harmonic map heat flow. This heat flow maps a fixed reference surface onto the unknown evolving surface and drives points on the evolving surface to move in their tangent spaces so as to reduce the associated harmonic energy. As a result, in the discrete setting, the mesh quality can be maintained at a level comparable to that of the reference surface, unless singularities occur. 
    The volume-preserving property is theoretically guaranteed by the delicate design of the scheme, while energy dissipation is enforced through a Lagrange multiplier. We present several numerical experiments to demonstrate the second-order convergence in time and the advantage of the proposed method in preserving mesh quality. The structure-preserving properties are further confirmed by the numerical results. Finally, the proposed framework can be readily extended to other geometric flows.
\end{abstract}

\begin{keywords}
    surface evolution, parametric finite element method, structure-preserving, harmonic map heat flows
\end{keywords}

\begin{MSCcodes}
53E10, 53E40, 65M60, 35K55
\end{MSCcodes}

\section{Introduction}

Let $\cM$ denote a closed {\it regular surface} in $\mathbb{R}^{3}$ (see \cite[Section 2-2]{do2016differential} for the definition). Assume that $\vX(t): \cM\rightarrow \Gamma(t)$ is a diffeomorphism that parameterizes the closed evolving surface  $\Gamma(t)$ over $\cM$, then the evolution of $\Gamma(t)$ can be described by the flow map $\vX(t)$:
\begin{equation}\label{SD}
    \frac{\partial \vX(t)}{\partial t}\circ\vX^{-1}(t)\cdot \vnu=v_n
\end{equation}
with $\nu$ the unit outward normal vector and $v_n$ the normal component of the velocity defined on $\Gamma(t)$. In this paper, we focus on surface diffusion and the volume-preserving mean curvature flow. In both cases, we have
\begin{equation*}
    \frac{d}{dt}|\Gamma(t)|\le 0\quad \mbox{and }\int_{\Gamma(t)}v_n \d_{\Gamma}=0,
\end{equation*}
where $|\Gamma(t)|:=\int_{\Gamma(t)}1\;\d_\Gamma$ denotes the surface area of $\Gamma(t)$.
That is, both flows decrease the surface area while preserving the enclosed volume.

Parametric finite element methods (PFEMs) for surface evolution equations originate from Dzuik's pioneering work \cite{Dziuk-1990}. The basic idea of PFEMs is to represent the unknown numerical surface by continuous piecewise polynomials of degree $k\ge1$ parameterized on  reference simplices. The numerical surface is then uniquely determined by the parametrization.
In the case of Lagrangian basis,  we  obtain the positions of all the nodes $\{p_j\}_{j=1}^{N_p}$ after solving the numerical system. However, the nodes may cluster improperly, causing the parametrization to become singular, which can severely degrade accuracy and even terminate the simulation.

One way to avoid mesh distortion (improper clustering of nodes) is to add artificial tangential motions into the original evolution  equation \eqref{SD}. In 2008, by coupling the original evolution equation with the geometric identity $\Delta_{\Gamma(t)}\Id=-H\vnu$, Barrett, Garcke and N\"urnberg \cite{BGN2008} proposed a linear Euler scheme for  mean curvature flow and surface diffusion. Their scheme is energy-decaying, but the enclosed volume is not guaranteed to be conserved for surface diffusion. Based on the Barrett-Garcke-N\"urnberg (BGN)  method, in 2021, Bao and Zhao \cite{Bao-Zhao-2021} constructed a structure-preserving nonlinear Euler scheme for surface diffusion, which means it is energy-decaying and volume-preserving in the discrete sense. Recently, Garcke et~al.~\cite{Garcke-Jiang-2025}  presented a family of second-order-in-time, structure-preserving schemes based on the BGN formulation. The energy-decaying and volume-preserving properties are enforced by two Lagrange multipliers. 

Besides the formulations based on the BGN method, several other approaches have been proposed to introduce artificial tangential motions in PFEMs. In 2014, Mikula et~al.~\cite{mikula2014manifold} introduced the volume-oriented and length-oriented tangential redistribution methods in a general setting of an  $m$-dimensional manifold evolving in an $n$-dimensional manifold. In 2017, Elliott and Fritz \cite{Elliott-Fritz-2017} proposed fully discrete schemes for mean curvature flow based on DeTurck's trick, which can avoid the mesh distortion that may arise in formulations based on the BGN method. In 2022, Hu and Li \cite{hu2022evolving} introduced another strategy to maintain node distribution for evolving surfaces under mean curvature flow and Willmore flow. The idea is to minimize the deformation rate (MDR) at each time step by restricting $\Delta_{\Gamma(t)}\partial_t\vX$ to be parallel to the normal vector field $\vnu$. In 2024, Duan and Li \cite{duan2024new} introduced  a new formulation to induce artificial tangential motions by requiring the flow map $\vX$ to be harmonic between the tangent spaces of $\Gamma(0)$ and $\Gamma(t)$, which is equivalent to minimizing the deformation (MD) between $\Gamma(0)$ and $\Gamma(t)$. Since harmonic maps are conformal or nearly conformal, parametric schemes based on this formulation can maintain the mesh quality of the evolving surface at the same level as that of  the initial mesh. Subsequently, by coupling the original geometric evolution equation with a harmonic map heat flow  from a fixed reference surface $\cM$ onto the unknown surface $\Gamma(t)$, Duan \cite{duan2024energy} proposed a family of  energy-stable schemes for mean curvature flow  that exhibit good mesh quality. Later in 2025, energy-decaying nonlinear Euler schemes for mean curvature flow and surface diffusion were proposed in \cite{duan2025mesh}  based on the reformulation in  \cite{duan2024energy}.

However, none of the above methods preserve the enclosed volume $V$ when applied to surface diffusion. Recently, Gao and Li \cite{gao2025geometric} proposed a structure-preserving approach for surface diffusion based on the MD formulation. Conservation of $V$ is achieved by adopting the strategy in \cite{Bao-Zhao-2021}, while energy decay is enforced through a Lagrange multiplier.
In the most recent work \cite{Gao-Garcke-Li-Tang-2026}, through a delicate analysis of the differences between the BGN method and the MDR method, the authors built a linear energy-decaying Euler schemes for mean curvature flow and surface diffusion that avoid the aforementioned drawback of the BGN method and produce mesh distributions similar to those of the MDR method.

We would also like to mention some related work on error analysis. For example, the skill of matrix formulation developed in \cite{KLLP2017,Kovacs-Li-Lubich-2019,B2021A} applies to finite elements of degree $k\ge 2$, while the trick of dynamic Ritz projection proposed in \cite{li2025dynamic} fills the gap for elements of degree $k=1$. For numerical analysis of curve diffusion, interested readers may refer to the recent work of Deckelnick and N\"urnberg \cite{deckelnick2025finite,deckelnick2025second} on continuous-in-time semidiscrete schemes in arbitrary codimension and fully discrete schemes for curve diffusion in $\mathbb{R}^2$.

Let $\mS$ and $\mS'$ be two smooth and orientable surfaces with or without boundary, and let $\Psi$ be a map from $\mS$ onto $\mS'$. Then straightforward calculations give
\begin{equation}\label{harmonic-ineq}
    \frac{1}{2}\int_{\mS}|\nabla_\mS {\Psi}|^2 \d_\mS\ge |\mS'|, \quad \mbox{and}\quad \frac{1}{2}\int_{\mS}|\nabla_\mS \Id|^2 \d_\mS=|\mS|.
\end{equation}
The first inequality in \eqref{harmonic-ineq}  shows that the harmonic energy (the left-hand side) is always no smaller than the surface area of the target surface $\mS'$, while the second identity implies that the identity map is harmonic.

For geometric flows driven by the surface area functional $|\Gamma(t)|$, essentially the only available technique for designing energy-decaying schemes without using a Lagrange multiplier is based on \eqref{harmonic-ineq}. Specifically, the surface area at $t_{n+1}$ is obtained from the inequality in \eqref{harmonic-ineq}, whereas the area at $t_{n}$ follows from the identity in \eqref{harmonic-ineq}. However, to the best of our knowledge, among time-discretization methods only the Euler scheme can exploit this trick, which leads to first-order accuracy. On the other hand, to construct volume-preserving schemes, e.g., in \cite{gao2025geometric,garcke2023structure}, a commonly used strategy is to adopt the averaged-in-time normal vector proposed in \cite{Bao-Zhao-2021}, which also yields first-order accuracy.

The aim of this paper is to develop second-order-in-time schemes that decay the energy while preserving the enclosed volume. To maintain mesh quality, we adopt the method developed in our previous work \cite{duan2024energy}, where the tangential velocity is determined by a harmonic map heat flow from a reference surface $\cM$ onto the unknown surface $\Gamma(t)$. Since the tangential motion decreases the harmonic energy $\frac{1}{2}\int_{\cM}|\nabla_{\cM}\vX|^2\d_{\cM}$ and harmonic maps are conformal or nearly conformal, the mesh quality can be maintained as the same level as that of the reference surface $\cM$ during the evolution away from singularities. A second-order averaged-in-time normal vectors are utilized to keep the enclosed volume unchanged. Energy decay is enforced through a Lagrange multiplier.

The remainder of this paper is organized as follows. We take surface diffusion as the example to elaborate the way of constructing our structure-preserving scheme in \Cref{Se-SD}. The idea is then generalized to solving the volume-preserving mean curvature flow in \Cref{Se-VCMCF}. Various numerical examples are presented in \Cref{Se-NE} to demonstrate the convergence rate in time, and to verify the structure-preserving and mesh-preserving properties.

\section{Surface diffusion}\label{Se-SD}
In this section we take surface diffusion as the example to illustrate our method. Let  $\cM$ and $\Gamma(t)$ denote the reference surface and the evolving surface, respectively. We use $\vX(t):=\vX(t,\cdot)$ to denote the diffeomorphism from $\cM$ onto $\Gamma(t)$, and $\vnu, H$ to represent the unit outward normal and mean curvature on $\Gamma(t)$. The mean curvature of $\Gamma(t)$ is given by
\begin{equation*}
    H=-\nabla_{\Gamma(t)}\cdot \vnu^T=-\Delta_{\Gamma(t)} \Id\cdot \vnu,
\end{equation*}
where $\nabla_{\Gamma(t)}\cdot \vnu_h^T=\tr(\nabla_{\Gamma(t)}\vnu_h)$. We adopt the convention that the (surface) gradient operator maps a scalar to a row vector. For vector-valued functions, e.g., $\vw=[w_1,w_2,\cdots,w_q]^T$, the $j$th row of $\nabla_{\cM}\vw$ is $\nabla_{\cM} w_j$. For more details of calculus on surfaces, we refer interested readers to \cite{deckelnick2005computation}. Surface diffusion can then be described by the evolution of the flow map $\vX(t,\cdot)$
\begin{equation}\label{SD-1}
    \frac{\partial \vX}{\partial t}\circ\vX^{-1}\cdot \vnu=\Delta_{\Gamma(t)}H.
\end{equation}
\subsection{Weak formulation}
To maintain the mesh quality during the simulation, we adopt the method proposed in our previous work  \cite{duan2024energy} to induce the tangential motions. The idea is to fix the normal velocity and to force the flow map $\vX(t)$ satisfying a modified harmonic map heat flow from a reference surface $\cM$ onto $\Gamma(t)$, given by
\begin{equation}\label{SEEs-HMHF}
         (I-\vnu\otimes\vnu)\frac{\partial \vX}{\partial t}\circ \vX^{-1} = k(t,\vx)(I-\vnu\otimes\vnu)(\Delta_{\cM} \vX) \circ \vX^{-1},
\end{equation}
where $k(t,\vx)$ is a scalar on $\Gamma(t)$ such that $\int_{\Gamma(t)}k(t,\vx) \d_{\Gamma}=\alpha\int_{{\cM}}1 \;\d_{\cM} $ with $\alpha\in \mathbb{R}|_{\ge 0}$ a constant. This heat flow decreases the harmonic energy $\frac{1}{2}\int_{{\cM}}|\nabla_{\cM}\vX|^2\,d_\cM$ measuring the deformation. As a result, it maintains the mesh quality in the discrete sense. 

 Adding \eqref{SD-1} and \eqref{SEEs-HMHF} together, using the curvature expression, we obtain the formulation: Find $\vX(t), H$ and $\lambda(t)\in \mathbb{R}$ with $\vX(0)=\vX^0: \cM\rightarrow \Gamma(0)$ such that
\begin{equation}\label{eq-r-SD}
    \left\{
    \begin{aligned}
        \frac{\partial \vX}{\partial t}\circ \vX^{-1}&=\lambda(\Delta_{\Gamma(t)}H)\vnu + k(t,\vx)(I-\vnu\otimes\vnu)(\Delta_{\cM} \vX) \circ \vX^{-1},\\
        H&= -\Delta_{\Gamma(t)} \Id\cdot \vnu,\\
        \frac{d}{dt}|\Gamma(t)|&=-\int_{\Gamma(t)} |\nabla_{\Gamma(t)}H|^2 \d_\Gamma,
    \end{aligned}
    \right.
\end{equation}
where $\lambda \in \mathbb{R}$ is a Lagrange multiplier. The following lemma reveals that $\lambda\equiv 1$ provided $\Gamma(t)$ is not a sphere. 
\begin{proposition}\label{prop-SDF}
    Suppose there exist sufficiently smooth $\vX(t)$ and $H$ solving \eqref{eq-r-SD} for $t\in (0,T]$, then it follows that $\lambda(t)\equiv 1$ for $t\in (0,T]$ provided that $\Gamma(t)$ is not a sphere.
    \begin{proof}
        Testing the first equation in \eqref{eq-r-SD} with $H\vnu$ yields
        \begin{equation*}
            \int_{\Gamma(t)}\frac{\partial \vX}{\partial t}\circ \vX^{-1} \cdot H\vnu \d_\Gamma =-\lambda\int_{\Gamma(t)}|\nabla_{\Gamma(t)}H |^2\;\d_\Gamma.
        \end{equation*}
    Note that
    \begin{equation*}
        \begin{aligned}
            \frac{d}{dt}|\Gamma(t)|&=\int_{\Gamma(t)}\nabla_{\Gamma(t)}\Id :\nabla_{\Gamma(t)}(\partial_t\vX\circ\vX^{-1})\;\d_{\Gamma}\\
            &=-\int_{\Gamma(t)} \partial_t\vX\circ\vX^{-1}\cdot \Delta_{\Gamma(t)}\Id\; \d_{\Gamma}=\int_{\Gamma(t)} \partial_t\vX\circ\vX^{-1}\cdot H\vnu\; \d_{\Gamma}.    
        \end{aligned}
    \end{equation*}
    Therefore, we have
    \begin{equation*}
        \frac{d}{dt}|\Gamma(t)|=-\lambda\int_{\Gamma(t)}|\nabla_{\Gamma(t)}H |^2\;\d_\Gamma.
    \end{equation*}
Comparing with the last equation in \eqref{eq-r-SD}, we obtain
 \begin{equation}\label{eq-lambda}
     (\lambda-1)\int_{\Gamma(t)}|\nabla_{\Gamma(t)}H |^2\;\d_\Gamma=0.
 \end{equation}
Hence, if $\Gamma(t)$ is not a sphere, the integral in \eqref{eq-lambda} will not vanish, which implies  $\lambda\equiv 1$.
    \end{proof}
\end{proposition}

The corresponding weak formulation reads: Find $\vX(t)\in \mathbf{H}^1(\cM)^3, H\in \mathbf{H}^1(\Gamma(t))$ and $\lambda\in \mathbb{R}$ such that
\begin{equation}\label{weak-formulation}
    \left\{
    \begin{aligned}
        \Big(\partial_t\vX \circ\vX^{-1}, \vxi\Big)_{\Gamma(t)}+\alpha\Big(\gM \vX,  \gM(\mathbb{P}\,\vxi\circ \vX)\Big)_{\cM}&\\
        +\lambda\Big(\nabla_{\Gamma(t)}H,\nabla_{\Gamma(t)}(\vxi\cdot \vnu)\Big)_{\Gamma(t)} &=0, \\
         \Big(\nabla_{\Gamma(t)}\Id,\nabla_{\Gamma(t)}(\eta\vnu)\Big)_{\Gamma(t)}-\Big(H,\eta \Big)_{\Gamma(t)}&=0, \\
        \frac{d}{dt}|\Gamma(t)|+\int_{\Gamma(t)} |\nabla_{\Gamma(t)}H|^2 \d_\Gamma&=0,
    \end{aligned}
    \right.
\end{equation}
holds  $ \forall\vxi\in H^1(\Gamma(t))^3$ and $\eta\in H^1(\Gamma(t))$, where $\mathbb{P}=I-\vnu\otimes\vnu$.
\begin{remark}
    In some situations, for example when using FEM packages such as {\texttt{Firedrake}} \cite{Rathgeber2017Firedrake}, the system must be formulated on a single surface, either $\Gamma(t)$ or $\mathcal{M}$. In the former case, the domain of the term involving $\alpha$ can be transplanted to $\Gamma(t)$ via
    \begin{equation*}
        \Big(\gM \vX,  \gM(\mathbb{P}\,\vxi\circ \vX)\Big)_{\cM}=\Big(\gGamma \Id\, J^{-1},  \gGamma(\mathbb{P}\,\vxi)\sqrt{\det(J)}\Big)_{\Gamma(t)},
    \end{equation*}
where $J=(\gGamma\vX^{-1})^{T}\gGamma\vX^{-1}+\vnu\otimes\vnu$, see e.g., \cite{duan2025mesh}.
\end{remark}
\subsection{Semidiscrete scheme}
Denote by $\{\cM_h\}_h$ an admissible family of shape-regular and quasi-uniform triangulations of $\cM$. For any $\cM_h\in \{\cM_h\}_h$, let $\{\tT_k\}_{k=1}^{N_T}$ be the collocation of simplices and $\{\tp_k\}_{k=1}^{N_p}$ the collocation of vertices. For any polyhedral surface $\cN_h = \cup_{k}T_k$ where $T_k$ are simplices, we introduce the finite element spaces
\begin{equation*}
    S(\cN_h)=\left\{w|\; w \mbox{ is continuous  and piecewise linear on each simplex} \right\}
\end{equation*}
and 
\begin{equation*}
    W(\cN_h)=\left\{w|\;w \mbox{ is constant on each simplex}\right\}.
\end{equation*}
In addition,  we define
\begin{equation}\label{eq-J}
    \mJ: \cN_h \rightarrow W(\cN_h)^3,\quad \mbox{with } \mJ(T)|_{T\in \{T_k\}}=(p_2-p_1)\times (p_3-p_1),
\end{equation}
where $\{p_j\}_{j=1}^3$, ordered counterclockwise  when viewed from the outward normal direction, are the vertices of the simplex $T$.

Denote by $\vX_h(t)\in S(\cM_h)^3$ the parameterization of $\Gamma_h(t)$ consisting of simplices $\{T_k(t)\}_{k=1}^{N_T}$ with vertices $\{p_k(t)\}_{k=1}^{N_p}$.
For any two functions $f_h$, $g_h$ defined on $\cN_h$ which are continuous on each simplex but may have jumps across the edges, we use $(\cdot,\cdot)_{\cN_h}^h$ to represent the mass lumped inner product on $\cN_h$, given by
\begin{equation*}
(f_h, g_h)_{\cN_h}^h=\frac{1}{3}\sum_{T\in \cN_h}|T|\sum_{j=1}^3 f_h(p_j)g_h(p_j), 
\end{equation*}
where $\{p_j\}_{j=1}^3$ are the vertices of $T$.

For the polyhedral surface $\Gamma_h(t)$, let $\pi_h$ denote the $L^2$-projection from $W(\Gamma_h(t))^3$ onto $S(\Gamma_h(t))^3$, defined by 
\begin{equation*}
    \left(\pi_h\vec{\chi}_h, \vxi_h\right)_{\Gamma_h(t)}^h=\left(\vec{\chi}_h, \vxi_h\right)_{\Gamma_h(t)}, \quad \forall\vxi_h\in S(\Gamma_h(t))^3.
\end{equation*}
The nodal values of $\pi_h\vec{{\chi}}_h$ can be written explicitly as
\begin{equation}\label{def-vmu0}
    (\pi_h\vec{\chi}_h)(p_j)=\frac{\sum_{k,p_j\in T_k} |T_k|\vec{\chi}_{h,k}}{{\sum_{k,p_j\in T_k} |T_k|}},
    \quad \vec{\chi}_{h,k}=\vec{\chi}_h|_{T_k}.
\end{equation}
Let $\vn_{h}\in W({\Gamma_h(t)})^3$ denote the unit outward normal vector of $\Gamma_h(t)$ and set $\vn_{h,k}=\vn_h|_{T_k}$, where we omitted the dependence of time for simplicity. We further define the weighted outward normal vector $\vnu_h\in S({\Gamma_h(t)})^3$ by 
\begin{equation}\label{def-vnu0}
    {\vnu}_{h}(p_j):=\frac{\pi_h\vn_h(p_j)}{|\pi_h\vn_h(p_j)|}=\frac{\sum_{k,p_j\in T_k} |T_k|\vn_{h,k}}{\left|\sum_{k,p_j\in T_k} |T_k|\vn_{h,k}\right|}.
\end{equation}

The semidiscrete scheme in space reads: Find $\vX_h(t)\in S(\cM_h)^3$, $H_h\in S(\Gamma_h(t))$ and $\lambda_h\in \mathbb{R}$ such that 
\begin{equation}\label{semi-scheme}
    \left\{
\begin{aligned}
    \Big(\partial_t\vX_h \circ\vX_h^{-1}, \vxi_h\Big)_{\Gamma_h(t)}^h+\alpha\Big(\gMh \vX_h,  \gMh\cI_h(\mathbb{P}_{h}\vxi_h\circ \vX_h)\Big)_{\cM_h} &\\
    +\lambda_h\Big(\nabla_{\Gamma_h(t)}H_h,\nabla_{\Gamma_h(t)}\cI_h\big(|\pi_h\vn_h|^{-1}\vxi_h\cdot\vnu_h\big)\Big)_{\Gamma_h(t)}&=0, \\
    \Big(\nabla_{\Gamma_h(t)}\Id,\nabla_{\Gamma_h(t)}\cI_h(\eta_h\vnu_h)\Big)_{\Gamma(t)}-\Big(H_h,\eta_h\Big)^h_{\Gamma_h(t)}&=0, \\
    \frac{d}{dt}|\Gamma_h(t)|+\int_{\Gamma_h(t)}|\nabla_{\Gamma_h(t)}H_h|^2 \;\d_{\Gamma_h}&=0,
\end{aligned}
\right.
\end{equation}
holds  for all $ \vxi_h\in H^1(\Gamma_h(t))^3$ and $\eta_h\in H^1(\Gamma_h(t))$, where $\mathbb{P}_{h}=\vnu_h\cdot \pi_h\vn_h-(\pi_h\vn_h) \vnu^T_h$ and $\cI_h$ denotes the piecewise linear interpolation operator.
\begin{remark}
    The standard $L^2$-inner product $(H_h, \eta_h)_{\Gamma_h(t)}$  also can be used instead of the mass-lumped one $(H_h, \eta_h)_{\Gamma_h(t)}^h$ in \eqref{semi-scheme} and in the fully discrete scheme, as well as in the corresponding formulations for the volume-preserving mean curvature flow. Mass lumping only affects the sparsity of the resulting system.
\end{remark}
\begin{theorem}
    Let $\Gamma_h(t)$ be the solution of \eqref{semi-scheme} and let $V_h(t)$ denote the  volume  enclosed by $\Gamma_h(t)$. Then $\frac{d}{dt}V_h(t)=0$.
\end{theorem}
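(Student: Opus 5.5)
The plan is to express $\frac{d}{dt}V_h(t)$ as a lumped inner product of the discrete velocity against $\pi_h\vn_h$. We then take $\vxi_h=\pi_h\vn_h$ as test function in the first equation of \eqref{semi-scheme} and check that the $\alpha$-term and the $\lambda_h$-term both vanish identically. This is exactly why the scheme uses the particular weights $\mathbb{P}_h$ and $|\pi_h\vn_h|^{-1}$.

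\emph{Step 1 (volume derivative).} Write $\vu_h:=\partial_t\vX_h\circ\vX_h^{-1}$. Since $\Gamma_h(t)$ is polyhedral and moves with the piecewise linear velocity $\vu_h\in S(\Gamma_h(t))^3$, the standard transport identity gives
\begin{equation*}
\frac{d}{dt}V_h(t)=\int_{\Gamma_h(t)}\vu_h\cdot\vn_h\,\d_{\Gamma_h}=\big(\vn_h,\vu_h\big)_{\Gamma_h(t)}.
\end{equation*}
One can verify this either via the divergence theorem or by differentiating the signed tetrahedral volumes $\frac16\sum_T p_1\cdot\big((p_2-p_1)\times(p_3-p_1)\big)$ with $\mJ$ from \eqref{eq-J}. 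By the definition of $\pi_h$, with $\vec\chi_h=\vn_h\in W(\Gamma_h(t))^3$ and $\vxi_h=\vu_h$, this equals $\big(\pi_h\vn_h,\vu_h\big)^h_{\Gamma_h(t)}$. This identity is exact because a linear function on a triangle integrates exactly to $\frac{|T|}{3}$ times the sum of its vertex values, which also gives \eqref{def-vmu0}.

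\emph{Step 2 (choice of test function).} Take $\vxi_h=\pi_h\vn_h\in S(\Gamma_h(t))^3$ in the first equation of \eqref{semi-scheme}. The first term then equals $\frac{d}{dt}V_h$ by Step 1.

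For the $\alpha$-term, we read $\mathbb{P}_h$ at each node as $(\vnu_h\cdot\pi_h\vn_h)I-(\pi_h\vn_h)\vnu_h^T$. Using $\vnu_h=\pi_h\vn_h/|\pi_h\vn_h|$ from \eqref{def-vnu0}, at every node $p_j$ we get
\begin{equation*}
\mathbb{P}_h\pi_h\vn_h=|\pi_h\vn_h|\,\pi_h\vn_h-\pi_h\vn_h\,|\pi_h\vn_h|=\vec 0 .
\end{equation*}
Hence $\cI_h(\mathbb{P}_h\vxi_h\circ\vX_h)\equiv\vec0$ and the $\alpha$-term vanishes.

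For the $\lambda_h$-term, the nodal values are $|\pi_h\vn_h|^{-1}\pi_h\vn_h\cdot\vnu_h=|\pi_h\vn_h|^{-1}|\pi_h\vn_h|=1$. So $\cI_h(\cdots)\equiv1$, its surface gradient is zero, and this term vanishes regardless of $\lambda_h$ and $H_h$.

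Combining the two steps gives $\frac{d}{dt}V_h(t)=0$. The main point requiring care is Step 1: the lumped product must reproduce $\int\vu_h\cdot\vn_h$ exactly, which holds because $\vn_h$ is piecewise constant. A further requirement is $\pi_h\vn_h(p_j)\neq\vec0$, so that $\vnu_h$ and $|\pi_h\vn_h|^{-1}$ are well defined; this is implicitly assumed by the scheme itself. Everything else is a nodal algebraic check, relying on the fact that all operators in the first equation act through nodal interpolation.
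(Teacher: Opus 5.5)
Your proposal is correct and follows essentially the same route as the paper. You test the first equation of \eqref{semi-scheme} with $\vxi_h=\pi_h\vn_h$, note that $\mathbb{P}_h\pi_h\vn_h=\vec 0$ at the nodes and that the interpolant in the $\lambda_h$-term is identically $1$, and then identify $(\partial_t\vX_h\circ\vX_h^{-1},\pi_h\vn_h)^h_{\Gamma_h(t)}=(\partial_t\vX_h\circ\vX_h^{-1},\vn_h)_{\Gamma_h(t)}=\frac{d}{dt}V_h(t)$ via the definition of $\pi_h$ and the transport theorem (the paper's only shortcut is that $(\vnu_h\cdot a)a-a(\vnu_h^Ta)=\vec 0$ holds for any $\vnu_h$, so the normalization $\vnu_h=\pi_h\vn_h/|\pi_h\vn_h|$ is not needed for the $\alpha$-term).
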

\begin{proof}
    Taking $\vxi_h=\pi_h\vn_h$ in \eqref{semi-scheme} with noting that 
    \begin{equation*}
        \begin{aligned}
            \mathbb{P}_h\pi_h\vn_h&=\big(\vnu_h\cdot \pi_h\vn_h-(\pi_h\vn_h) \vnu^T_h\big)\pi_h\vn_h\\
            &=(\vnu_h\cdot \pi_h\vn_h)\pi_h\vn_h-\pi_h\vn_h(\vnu^T_h\pi_h\vn_h)\\
            &=\vec{0}
        \end{aligned}
    \end{equation*}
and $\cI_h\big(|\pi_h\vn_h|^{-1}\pi_h\vn_h\cdot\vnu_h\big)=\cI_h(\vnu_h\cdot\vnu_h)=1$, we obtain 
\begin{equation*}
    \Big(\partial_t\vX_h \circ\vX_h^{-1}, \pi_h\vn_h\Big)_{\Gamma_h(t)}^h=\Big(\partial_t\vX_h \circ\vX_h^{-1}, \vn_h\Big)_{\Gamma_h(t)}=\frac{d}{dt}V_h(t)=0,
\end{equation*}
where the first step follows from the definition of $\pi_h$ and the second step follows from the transport theorem.
\end{proof}
\subsection{Fully discrete scheme}
Let $0=t_0<t_1<\cdots<t_{M-1}<t_M=T$ be a quasi-uniform partition  of $[0,T]$ and denote by $\tau_m=t_{m}-t_{m-1}$ the time step size. Let $\Gamma_h(t_m)$ represent the image of $\vX_h(t_m)$ and  $\hat{\Gamma}_h(t_{m+\half})=\cup_{k}\hat{T}_k(t_{m+\half})$ the image of the extrapolation $\hX_h(t_{m+\half}):=\vX_h(t_m)+\frac{\tau_{m+1}}{2\tau_m}\big(\vX_h(t_m)-\vX_h(t_{m-1})\big) (m\ge 1)$. To introduce our second order scheme, we define an averaged-in-time outward normal vector $\tn_h(t_{m+\half})$ for $\Gamma_h(t_{m+\half})$:
\begin{equation}\label{modify-n}
    \tn_{h,k}(t_{m+\half}):=\tn_h(t_{m+\half})\big|_{T_k(t_{m+\half})}=\frac{\mJ_k(t_m)+4\mJ_k(t_{m+\half})+\mJ_k(t_{m+1}) }{6|\hat{\mJ}(t_{m+\half})|}
\end{equation}
where $\mJ_k(t):=\mJ(T_k(t))$ and $\hat{\mJ}_k(t_{m+\half})=\mJ(\hat{T}_k(t_{m+\half}))$. This normal is formally a second-order approximation to $\vn_{h,k}$. In fact, Taylor expansion gives
\begin{equation*}
\begin{aligned}
    &\tn_{h,k}(t_{m+\half})-\vn_{h,k}(t_{m+\half})\\
    &=\frac{\mJ_k(t_m)+4\mJ_k(t_{m+\half})+\mJ_k(t_{m+1}) }{6|\hat{\mJ}_k(t_{m+\half})|}-\frac{\mJ_k(t_{m+\half})}{|\hat{\mJ}_k(t_{m+\half}|}+\frac{\mJ_k(t_{m+\half})}{|\hat{\mJ}_k(t_{m+\half}|}-\frac{\mJ_k(t_{m+\half})}{|\mJ_k(t_{m+\half}|}\\
    &=\frac{\mJ_k(t_m)+\mJ_k(t_{m+1})-2\mJ_k(t_{m+\half}) }{6|\hat{\mJ}(t_{m+\half})|}+\frac{\mJ_k(t_{m+\half})}{|\hat{\mJ}_k(t_{m+\half}|}-\frac{\mJ_k(t_{m+\half})}{|\mJ_k(t_{m+\half}|}\\
 &= \mO(\tau_{m+1}^2)  +\vn_{h,k}\mO\big(\tau_m^2+\tau_{m+1}^2\big).
\end{aligned}
\end{equation*}
For $m=0$, since no extrapolation can be used, we define $\tn_{h}(t_\half)$ by
\begin{equation}\label{modify-n-0}
    \tn_{h,k}(t_{\half}):=\tn_h(t_{\half})\big|_{T_k(t_{\half})}=\frac{\mJ_k(t_0)+4\mJ_k(t_{\half})+\mJ_k(t_{1}) }{6|{\mJ}(t_{\half})|}.
\end{equation}

Denote by $\Gamma_h^{m}=\cup_k{T_k^m}$ the polyhedral surface produced by the numerical scheme, where $\{p_{k,j}^m\}_{j=1}^3$ (ordered counterclockwise) are the vertices of the simplex $T_k^m$. Let $\vX_h^m\in S(\cM_h)^3$ be the parameterization of $\Gamma_h^m$, and 
\begin{equation*}
\hX_h^{m+\half}:=\vX_h^m+\frac{\tau_{m+1}}{2\tau_m}(\vX_h^{m}-\vX_h^{m-1})\quad m\ge 1
\end{equation*}
be the linear extrapolation whose image is $\hGam=\cup_k\hat{T}_k^{m+\half}$.  For $m\ge 0$ we further denote $\Gamma_h^{m+\half}=\cup_k {T}_k^{m+\half}$ which is the image of the flow map $\vX_h^{m+\half}:=\frac{1}{2}(\vX_h^{m}+\vX_h^{m+1})$. We further let $\hnu_h^{m+\half}\in S(\hGam)^3$ represent the weighted unit outward normal vector of $\hGam$. 

For $m\ge 1$, the fully discrete scheme reads: Find $\vu_h^{m+1}\in S(\hGam)^3, H_h^{m+\half}\in S(\hGam)$ and $\lambda_h^{m+1}\in \mathbb{R}$ such that 
\begin{equation}\label{CN-scheme}
        \left\{
\begin{aligned}
    \left(\frac{\vu_h^{m+1}-\vu_h^m}{\tau_{m+1}},\vxi_h\right)^h_{\hGam} +\alpha\Big(\gMh \vX_h^{m+\half},
    \gMh\cI_h\big(\mathbb{P}_{h}^{m+\half}\vxi_h\circ \hX_h^{m+\half}\big)\Big)_{\cM_h}&\\
        +\,\lambda_h^{m+1}\left(\nabla_{\hGam} H_h^{m+\half},
            \nabla_{\hGam}\cI_h\left[
                \frac{\vxi_h\cdot\pi_h\tn_h^{m+\half}}{|\pi_h\tn_h^{m+\half}|^2}
            \right]\right)_{\hGam}=0&,\\
    \left(\nabla_{\hGam}\frac{\vu_h^{m+1}+\vu_h^m}{2}, \nabla_{\hGam}\cI_h(\eta_h\vnu_h^{m+\half})\right)_{\hGam}
        =\left(H_h^{m+\half},\eta_h\right)^h_{\hGam},&\\
    \frac{|\Gamma_h^{m+1}|-|\Gamma_h^m|}{\tau_{m+1}}
        +\left(\nabla_{\hGam}H_h^{m+\half},\nabla_{\hGam}H_h^{m+\half}\right)_{\hGam}=0,&
\end{aligned}
    \right.
\end{equation}
holds for $\forall \vxi_h\in S(\hGam)^3$ and $\eta_h\in S(\hGam)$, where, for simplicity, we adopted the notation $\vu_h^m:=\vX_h^{m}\circ(\hX_h^{m+\half})^{-1}$ and denoted by $\vnu_h^{m+\half}$ the weighted unit outward normal of $\Gamma_h^{m+\half}$ lifted on $\hGam$. The projection operator $\mathbb{P}_{h}^{m+\half}$ is given by
\begin{equation}\label{def-P}
    \mathbb{P}_{h}^{m+\half}=\hnu_h^{m+\half}\cdot \pi_h\tn_h^{m+\half}-(\pi_h\tn_h^{m+\half})(\hnu_h^{m+\half})^T
\end{equation}
with $\tn_h^{m+\half}$ defined by 
\begin{equation*}
    \tn_h^{m+\half}\big|_{\hat{T}_{k}^{m+\half}}=\frac{\mJ_k^{m}+4\mJ_k^{m+\half}+\mJ_k^{m+1}}{6|\hat{\mJ}_k^{m+\half}|}\qquad m\ge 1, 
\end{equation*}
where $\mJ_k^m=\mJ(T_k^m)$, $\mJ_k^{m+\half}=\mJ(T_k^{m+\half})$ and $\hat{\mJ}_k^{m+\half}=J(\hat{T}_k^{m+\half})$.

For $m=0$, we adopt a fully nonlinear scheme in which $\hat{\Gamma}_h^\half$ is replaced by the unknown $\Gamma_h^\half$. The scheme reads: Find $\vu_h^1\in S(\Gamma_h^\half)^3$, $H_h^\half\in S(\Gamma_h^\half)$ and $\lambda_h^1\in \mathbb{R}$ such that 
\begin{equation}\label{CN-scheme-start}
    \left\{
    \begin{aligned}
        \left(\frac{\vu_h^{1}-\vu_h^0}{\tau_{1}} ,\vxi_h\right)^h_{\Gamma_h^\half}+ \lambda_h^1\left(\nabla_{\Gamma_h^\half} H_h^{\half},\nabla_{\Gamma_h^\half}\cI_h\left[\frac{\vxi_h\cdot\pi_h\tn_h^{\half}}{|\pi_h\tn_h^{\half}|^2}\right]\right)_{\Gamma_h^\half}&\\
        +\,\alpha\Big(\gMh \vX_h^{\half},  \gMh\cI_h\big(\mathbb{P}_h^{\half}\vxi_h\circ \vX_h^{\half}\big)\Big)_{\cM_h}&=0, \\
        \frac{1}{2}\left(\nabla_{\Gamma_h^\half}(\vu_h^{1}+\vu_h^0), \nabla_{\Gamma_h^\half}\cI_h(\eta_h\vnu_h^{\half})\right)_{\Gamma_h^\half}
        -\left(H_h^{\half},\eta_h\right)^h_{\Gamma_h^\half}&=0 ,\\
        \frac{|\Gamma_h^{1}|-|\Gamma_h^0|}{\tau_{1}}+\left(\nabla_{\Gamma_h^\half}H_h^{\half},\nabla_{\Gamma_h^\half}H_h^{\half}\right)_{\Gamma_h^\half}&=0,
    \end{aligned}
    \right.
\end{equation}
for $\forall \vxi_h\in S(\Gamma_h^\half)^3$ and $\eta_h\in S(\Gamma_h^\half)$. In the scheme above 
\begin{equation}\label{tn-start}
    \tn_h^{\half}\big|_{{T}_{k}^{\half}}=\frac{\mJ_k^{0}+4\mJ_k^{\half}+\mJ_k^{1}}{6|{\mJ}_k^{\half}|}, 
\end{equation}
and $\mathbb{P}_{h}^{\half}=\vnu_h^{\half}\cdot \pi_h\tn_h^{\half}-(\pi_h\tn_h^{\half})(\vnu_h^{\half})^T$ with $\vnu_h^\half\in S(\Gamma_h^\half)^3$  the weighted outward unit nodal normal of $\Gamma_h^\half$.

\begin{theorem}\label{thm-Vol-cons}
    Let $V_h^{m}$ denote the  volume enclosed by $\Gamma_h^{m}$. Then scheme \eqref{CN-scheme} satisfies $V_h^{m+1}=V_h^{m}=\cdots=V_h^1=V_h^0$ and $|\Gamma_h^{m+1}|\le |\Gamma_h^{m}|\le \cdots\le|\Gamma_h^1|\le |\Gamma_h^0|$.
\end{theorem}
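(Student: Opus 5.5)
The plan is to mimic the semidiscrete proof, with the exact volume change over one time step in place of the transport theorem. The energy part is immediate: the third equation of \eqref{CN-scheme} (and of \eqref{CN-scheme-start} for $m=0$) gives
\begin{equation*}
|\Gamma_h^{m+1}|-|\Gamma_h^m|=-\tau_{m+1}\int_{\hGam}|\nabla_{\hGam}H_h^{m+\half}|^2\,\d_{\Gamma_h}\le 0,
\end{equation*}
and induction yields the chain of inequalities. So the substance is volume conservation.

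For the volume, I take $\vxi_h=\pi_h\tn_h^{m+\half}\in S(\hGam)^3$ in the first equation of \eqref{CN-scheme}. The $\alpha$-term vanishes by the same computation as in the semidiscrete proof:
\begin{equation*}
\mathbb{P}_h^{m+\half}\pi_h\tn_h^{m+\half}=(\hnu_h^{m+\half}\cdot\pi_h\tn_h^{m+\half})\pi_h\tn_h^{m+\half}-\pi_h\tn_h^{m+\half}\big((\hnu_h^{m+\half})^T\pi_h\tn_h^{m+\half}\big)=\vec 0
\end{equation*}
nodewise, so its interpolant is zero. In the $\lambda$-term, the interpolant of $|\pi_h\tn_h^{m+\half}|^2/|\pi_h\tn_h^{m+\half}|^2$ equals $1$, a constant, so its surface gradient vanishes. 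It remains that $(\vu_h^{m+1}-\vu_h^m,\pi_h\tn_h^{m+\half})^h_{\hGam}=0$, and by the definition of $\pi_h$ this equals $(\vu_h^{m+1}-\vu_h^m,\tn_h^{m+\half})_{\hGam}=0$.

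The key step is to identify this quantity with $V_h^{m+1}-V_h^m$. On each triangle $\hat T_k^{m+\half}$, the integral against the elementwise constant $\tn_h^{m+\half}$ is, after pulling back to the reference triangle, $\frac{|\hat{\mJ}_k|}{2}\int_{\hat T}(\vec w)\,\d\hat x\cdot\tn_{h,k}$, where $\vec w$ is the linear displacement field. The $|\hat{\mJ}_k^{m+\half}|$ factors cancel, leaving
\begin{equation*}
\frac{1}{6}\int_{\text{ref}}(\vX_h^{m+1}-\vX_h^m)\cdot(\mJ_k^m+4\mJ_k^{m+\half}+\mJ_k^{m+1}),
\end{equation*}
with the Jacobian-free reference measure. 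Now consider the straight-line homotopy $\vX(s)=\vX_h^m+s(\vX_h^{m+1}-\vX_h^m)$ for $s\in[0,1]$. Then $\frac{d}{ds}V(s)=\sum_k\frac12\int_{\text{ref}}\partial_s\vX\cdot\mJ_k(s)$, and $\mJ_k(s)$ is a quadratic polynomial in $s$ (a cross product of affine edge vectors). Simpson's rule integrates it exactly over $[0,1]$, and $\mJ_k(1/2)=\mJ_k^{m+\half}$ because $\vX_h^{m+\half}$ is the average. Since $\partial_s\vX$ is independent of $s$, this gives
\begin{equation*}
V_h^{m+1}-V_h^m=\int_0^1\frac{d}{ds}V(s)\,\d s=(\vu_h^{m+1}-\vu_h^m,\tn_h^{m+\half})_{\hGam}=0.
\end{equation*}
For $m=0$ the same argument applies to \eqref{CN-scheme-start} with \eqref{tn-start}, where $\Gamma_h^\half$ replaces $\hat\Gamma_h^\half$ and $|\mJ_k^\half|$ plays the role of the normalizing factor.

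The main obstacle is making this identification rigorous. First, one must check the constants: the factor $\frac12$ from the area of the reference triangle against the normalization of $\mJ$, and the consistent counterclockwise orientation of vertices across $T_k^m$, $T_k^{m+\half}$, $T_k^{m+1}$ and $\hat T_k^{m+\half}$. Second, one must justify the volume derivative formula $\frac{d}{ds}V=\int\partial_s\vX\cdot\vn\,\d_\Gamma$ along the homotopy even if intermediate surfaces self-intersect. For this I would define the signed volume by $\frac13\int\vx\cdot\vn$, which is valid for any closed oriented polyhedral surface, so no embeddedness is needed.
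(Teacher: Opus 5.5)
Your proposal is correct and follows essentially the paper's route. You test with $\pi_h\tn_h^{m+\half}$ so that the $\alpha$- and $\lambda$-terms vanish, pass from the lumped to the exact inner product via the definition of $\pi_h$, and then identify $(\vu_h^{m+1}-\vu_h^m,\tn_h^{m+\half})_{\hGam}$ with $V_h^{m+1}-V_h^m$ through the linear homotopy, the cancellation of $|\hat{\mJ}_k^{m+\half}|$, and the exactness of Simpson's rule for the quadratic $\mJ_k(\theta)$. The only slip is bookkeeping: with your $\frac12$ pull-back factor the intermediate coefficient should be $\frac{1}{12}$ rather than $\frac16$. This is harmless, since both sides use the same pull-back and the quantity vanishes anyway. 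Your signed-volume remark adds rigor that the paper leaves implicit.
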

\begin{proof}
    Denote $\vz(\theta):=\vz(\theta,\cdot)=(1-\theta)\vu_h^m+\theta\vu_h^{m+1}$ for $\theta\in[0,1]$. Let $\mathbf{u}^m$ be the collocations of  the vertexes of $\Gamma_h^m$ and  we use $\Xi(\theta)$ to represent the polyhedral surface with nodal positions $\mbfz(\theta):=(1-\theta)\mbfu^m+\theta\mbfu^{m+1}$. Let $\vn_h(\theta)\in W(\Xi(\theta))^3$ be the unit outward normal vector of $\Xi(\theta)$. Suppose $\Xi(\theta)=\cup_{k}T_k(\theta)$ and $\{p_{k,j}(\theta)\}_{j=1}^3$ are the nodal positions of $T_k$ located counterclockwise. Denote $\mJ_k(\theta)=(p_{k,2}-p_{k,1})\times(p_{k,3}-p_{k,1})$ and $V(\theta)$ the enclosed volume by $\Xi(\theta)$, then
        \begin{equation*}
        \begin{aligned}
            \frac{d}{d\theta}V(\theta)&=\int_{S(\theta)}\frac{d}{d\theta}\vz(\theta)\circ\vz^{-1}(\theta)\cdot\vn_h(\theta) \,\d_{\Xi(\theta)}\\
            &=\sum_{k}\int_{T_k(\theta)}\frac{d}{d\theta}\vz(\theta)\circ\vz^{-1}(\theta)\cdot \frac{\mJ_{k}(\theta)}{|\mJ_{k}(\theta)|} \,\d_{\Xi(\theta)}.
        \end{aligned}
    \end{equation*}
Utilizing the transplant 
\begin{equation*}
    \int_{T_k(\theta)}|\mJ_k(\theta)|^{-1}\,\d_{\Xi(\theta)}=\int_{\hat{T}_h^{m+\half}}|\hat{\mJ}_{k}^{m+\half}|^{-1}\,\d_{\Gamma_h}
\end{equation*}
and the definition of $\vz(\theta)$ it follows
\begin{equation*}
    \begin{aligned}
        \frac{d}{d\theta}V(\theta)    &=\sum_{k}\int_{\hat{T}_k^{m+\half}}(\vu_h^{m+1}-\vu_h^m)\cdot\frac{\mJ_{k}(\theta)}{|\hat{\mJ}_{k}^{m+\half}|} \,\d_{\Gamma_h}.
    \end{aligned}
\end{equation*}
    Note that $\mJ_k(\theta)$ is a quadratic polynomial with respect to $\theta$. Hence, applying Simpson's rule we get
    \begin{equation}\label{Vol-cons}
        \begin{aligned}
            V(1)-V(0)&=\sum_{k} \int_0^1 \int_{\hat{T}_k^{m+\half}}(\vu_h^{m+1}-\vu_h^m) \cdot\frac{\mJ_{k}(\theta)}{|\hat{\mJ}_{k}^{m+\half}|} \,\d_{\Gamma_h} \,d\theta\\
            &=\sum_{k}\int_{\hat{T}_k^{m+\half}}(\vu_h^{m+1}-\vu_h^m) \cdot\frac{\mJ_{k}(0)+4\mJ_k(\half)+\mJ_k(1)}{6|\hat{\mJ}_{k}^{m+\half}|}\,\d_{\Gamma_h}\\
            &=\int_{\hGam}(\vu_h^{m+1}-\vu_h^m) \cdot\tn_h^{m+\half}\,\d_{\Gamma_h}.
        \end{aligned}
    \end{equation}
Take $\vxi=\pi_h\tn_h^{m+\half}$ in the first equation in \eqref{CN-scheme}. Noting that 
\[\cI_h(\mathbb{P}_h^{m+\half}\pi_h\tn_h^{m+\half})= \vec{0}\]
and
\[\cI_h\big( \pi_h\tn_h^{m+\half}\cdot\pi_h\tn_h^{m+\half}\,{\big|\pi_h\tn_h^{m+\half}\big|^{-2}}\big)=1,\]
it follows that
\begin{equation*}
    \left(\vu_h^{m+1}-\vu_h^m,\pi_h\tn_h^{m+\half}\right)_{\hGam}^h=0.
\end{equation*}
Appealing to the definition of $\pi_h$ we get
\begin{equation*}
    \left(\vu_h^{m+1}-\vu_h^m,\tn_h^{m+\half}\right)_{\hGam}=0.
\end{equation*}
Therefore,$V(1)-V(0)=0$, which implies $V_h^{m+1}=V_h^{m}$ for $m\ge 1$. For $m=0$, take $\vxi_h=\pi_h\tn_h^{\half}$ in the first equation of \eqref{CN-scheme-start} and   repeat the procedures in \eqref{Vol-cons} with $\hat{\Gamma}_h^\half$  replaced by $\Gamma_h^\half$ and $\tn_h^\half$ defined in \eqref{tn-start}, then one can get $V_h^1=V_h^0$. The energy decay follows directly from the last equations in \eqref{CN-scheme} and \eqref{CN-scheme-start}.
\end{proof}
\begin{remark}
    Empirically, Euler scheme is often used to initialize a second-order method involving standard extrapolation. For example, one may employ the following Euler scheme at the first time step
    \begin{equation}\label{Euler-scheme}
        \left\{
        \begin{aligned}
            \left(\frac{\vu_h^{1}-\vu_h^0}{\tau_{1}} ,\vxi_h\right)^h_{\Gamma_h^0}+ \lambda_h^1\left(\nabla_{\Gamma_h^0} H_h^{1},\nabla_{\Gamma_h^0}\cI_h\left[\frac{\vxi_h\cdot\pi_h\tn_h^{1}}{|\pi_h\tn_h^{1}|^2}\right]\right)_{\Gamma_h^0}&\\
            +\alpha\Big(\gMh \vX_h^{1},  \gMh\cI_h(\mathbb{P}_h^1\vxi_h\circ \vX_h^0)\Big)_{\cM_h}=0&, \quad \forall \vxi_h\in S(\Gamma_h^0)^3\\
            \left(\nabla_{\Gamma_h^0}\vu_h^{1}, \nabla_{\Gamma_h^0}\cI_h(\eta_h\vnu_h^{0})\right)_{\Gamma_h^0}
            -\left(H_h^{1},\eta_h\right)^h_{\Gamma_h^0}=0&, \quad \forall \eta_h\in S(\Gamma_h^0)\\
            \frac{|\Gamma_h^{1}|-|\Gamma_h^0|}{\tau_1}+\left(\nabla_{\Gamma_h^0}H_h^{1},\nabla_{\Gamma_h^0}H_h^{1}\right)_{\Gamma_h^0}=0&,
        \end{aligned}
        \right.
    \end{equation}
    where 
    \begin{equation}\label{Euler-n-P}
        \tn_h^{1}\big|_{{T}_{k}^{0}}=\frac{\mJ_k^{0}+4\mJ_k^{\half}+\mJ_k^{1}}{6|{\mJ}_k^{0}|} \quad \mbox{and}\quad \mathbb{P}_{h}^{1}=\vnu_h^{0}\cdot \pi_h\tn_h^{1}-(\pi_h\tn_h^{1})(\vnu_h^{0})^T.
    \end{equation}
Like before, one can verify easily that $V_h^1=V_h^0$ holds. However, numerical experiments (see \Cref{con-t-ex1}) indicate that the temporal convergence rate in time is reduced if this Euler scheme is used.
\end{remark}

\subsection{Iterative method}
In practice, we employ the following simple and effective iterative method to solve \eqref{CN-scheme}. Let $(\vu_{h,l}^{m+1},H_{h,l}^{m+\half},\lambda_{h,l}^{m+1})$ denote the solution after the $l$th iteration with $\Gamma_{h,l}^{m+1}$ the corresponding numerical surface given by $\vX_{h,l}^{m+1}:=\vu_{h,l}^{m+1}\circ\hat{X}_{h}^{m+\half}$. We choose the initial guess $(\vu_{h,0}^{m+1},H_{h,0}^{m+\half},\lambda_{h,0}^{m+1})=(\vu_{h}^{m},H_{h}^{m-\half},\lambda_{h}^{m})$ and update the solution by solving the following linear system
 \begin{equation}\label{CN-scheme-linear}
     \left\{
    \begin{aligned}
        \left(\frac{\vu_{h,l+1}^{m+1}-\vu_h^m}{\tau_{m+1}} ,\vxi_h\right)^h_{\hGam} +\alpha\Big(\gMh \vX_{h,l+1}^{m+\half},
        \gMh\cI_h\big(\mathbb{P}_{h,l}^{m+\half}\vxi_h\circ \hX_h^{m+\half}\big)\Big)_{\cM_h}& \\
       +(\lambda_{h,l+1}^{m+1}-\lambda_{h,l}^{m+1})\left(
                \nabla_{\hGam} H_{h,l}^{m+\half},
                \nabla_{\hGam}\cI_h\left[
                    \frac{\vxi_h\cdot\pi_h\tn_{h,l}^{m+\half}}{|\pi_h\tn_{h,l}^{m+\half}|^2}
                \right]\right)_{\hGam}& \\
                  +\lambda_{h,l}^{m+1}\left(\nabla_{\hGam} H_{h,l+1}^{m+\half},
                \nabla_{\hGam}\cI_h\left[
                \frac{\vxi_h\cdot\pi_h\tn_{h,l}^{m+\half}}{|\pi_h\tn_{h,l}^{m+\half}|^2}
                \right]\right)_{\hGam}=0&,  \\
        \left(\nabla_{\hGam}\frac{\vu_{h,l+1}^{m+1}+\vu_h^m}{2}, \nabla_{\hGam}\cI_h(\eta_h\vnu_{l}^{m+\half})\right)_{\hGam}
            =\left(H_{h,l+1}^{m+\half},\eta_h\right)^h_{\hGam}&, \\
        \frac{|\Gamma_{h,l}^{m+1}|-|\Gamma_h^m|}{\tau_{m+1}}
            +\left(\nabla_{\hGam}H_{h,l}^{m+\half},\nabla_{\hGam}H_{h,l}^{m+\half}\right)_{\hGam} \hspace{2.5cm}& \\
        +\frac{1}{\tau_{m+1}}\int_{\Gamma_{h,l}^{m+1}}\nabla_{\Gamma_{h,l}^{m+1}}\cdot\big(\vu_{h,l+1}^{m+1}-\vu_{h,l}^{m+1}\big)\,\d_{\Gamma_h}\hspace{1cm}&\\
        +2\left(\nabla_{\hGam}(H_{h,l+1}^{m+\half}-H_{h,l}^{m+\half}),\nabla_{\hGam}H_{h,l}^{m+\half}\right)_{\hGam}=0&, 
    \end{aligned}
     \right.
 \end{equation}
where
\begin{equation*}
    \tn_{h,l}^{m+\half}\big|_{\hat{T}_{k}^{m+\half}}=\frac{\mJ_k^{m}+4\mJ_{k,l}^{m+\half}+\mJ_{k,l}^{m+1}}{6|\hat{\mJ}_{k}^{m+\half}|},
\end{equation*}
$\mJ_{k,l}^{m+\half}$ and $\mJ_{k,l}^{m+1}$ correspond to $\mJ(\Gamma_{h,l}^{m+\half})$ and $\mJ(\Gamma_{h,l}^{m+1})$, respectively, and  $\Gamma_{h,l}^{m+\half}$ denotes the image of $\frac{1}{2}(\vX_{h,l}^{m+1}+\vX_h^{m})$ with $\vnu_{h,l}^{m+\half}$ the weighted unit outward normal.  For $m=0$ we apply a similar procedure, where the integral domain is replaced by the image of $\vX_{h,l}^{\half}$ at the $(l+1)$th iteration and the initial domain is taken as $\Gamma_h^0$.  We stop the iteration and set $(\vu_h^{m+1},H_h^{m+\half},\lambda_h^{m+1})=(\vu_{h,l+1}^{m+1},H_{l+1}^{m+\half},\lambda_{h,l+1}^{m+1})$ if the stopping criterion $\max |\vu_{h,l+1}^{m+1}-\vu_{h,l}^{m+1}|\le tol$ is satisfied for a prescribed tolerance $tol$.

\section{Volume-preserving mean curvature flow}\label{Se-VCMCF}
The volume-preserving mean curvature flow originates from the work of Gage \cite{Gage1986} for planar curves in 1986, and Huisken \cite{Huisken1987} for hypersurfaces in 1987. Since then it has been widely studied including the existence and uniqueness, the behavior of the solution, and the connections with diffusion interface models, see e.g., \cite{mugnai2016global,rubinstein1992nonlocal,Mellet2025,Bronsard1997} and the references therein.  The volume-preserving mean curvature flow can be interpreted as the $L^2$-gradient flow induced by the  functional of surface area under unchanged enclosed volume, given by 
 \begin{equation}\label{VCMCF}
     \frac{\partial \vX}{\partial t}\circ\vX^{-1} \cdot \vnu=-\big(H-\langle H\rangle\big),
 \end{equation}
where  $\langle H\rangle:=\frac{1}{|\Gamma(t)|}\int_{\Gamma(t)} H \d_{\Gamma}$ is the averaged mean curvature over $\Gamma(t)$. In the equation above we continued to use $\cM$ and $\Gamma(t)$ to denote the reference surface and the evolving surface, respectively, and $\vX$ to denote the flow map from $\cM$ onto $\Gamma(t)$.   Like before, we couple \eqref{VCMCF}  with the harmonic map heat flow and introduce a Lagrange multiplier for the energy then reformulate it into the following system
\begin{equation}\label{eq-r-VCMCF}
    \left\{
    \begin{aligned}
        \frac{\partial \vX}{\partial t}\circ \vX^{-1}&=-\lambda\big(H-\langle H\rangle\big)\vnu + k(t,\vx)(I-\vnu\otimes\vnu)(\Delta_{\cM} \vX) \circ \vX^{-1},\\
        H&= -\Delta_{\Gamma(t)} \Id\cdot \vnu,\\
        \frac{d}{dt}|\Gamma(t)|&=-\int_{\Gamma(t)} |H-\langle H\rangle|^2 \,\d_\Gamma.
    \end{aligned}
    \right.
\end{equation}
\begin{proposition}\label{prop-MCF}
    Suppose there exist sufficiently smooth $\vX$ and $H$ solving \eqref{eq-r-VCMCF}. Then $\lambda\equiv 1$ provided that $\Gamma(t)$ is not a sphere.
\end{proposition}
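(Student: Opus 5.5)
We follow the argument of \Cref{prop-SDF}: compute $\frac{d}{dt}|\Gamma(t)|$ from the first equation of \eqref{eq-r-VCMCF}, then compare it with the energy law in the third equation.

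\textbf{Step 1 (first variation of area).} Exactly as in \Cref{prop-SDF}, integrating by parts on the closed surface $\Gamma(t)$ and using $\Delta_{\Gamma(t)}\Id=-H\vnu$ gives
\begin{equation*}
\frac{d}{dt}|\Gamma(t)|=\int_{\Gamma(t)}\partial_t\vX\circ\vX^{-1}\cdot H\vnu\,\d_\Gamma .
\end{equation*}

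\textbf{Step 2 (test the velocity equation with $H\vnu$).} Take the inner product of the first equation of \eqref{eq-r-VCMCF} with $H\vnu$. The tangential term is annihilated, since $(I-\vnu\otimes\vnu)$ applied to any vector is orthogonal to $\vnu$. This leaves
\begin{equation*}
\frac{d}{dt}|\Gamma(t)|=-\lambda\int_{\Gamma(t)}(H-\langle H\rangle)H\,\d_\Gamma .
\end{equation*}
By the definition of $\langle H\rangle$ we have $\int_{\Gamma(t)}(H-\langle H\rangle)\,\d_\Gamma=0$. We may therefore subtract $\langle H\rangle\int_{\Gamma(t)}(H-\langle H\rangle)\,\d_\Gamma$ from the integral and obtain
\begin{equation*}
\frac{d}{dt}|\Gamma(t)|=-\lambda\int_{\Gamma(t)}|H-\langle H\rangle|^2\,\d_\Gamma .
\end{equation*}
This mean-zero rewriting is the only genuinely new ingredient compared with the surface diffusion case.

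\textbf{Step 3 (compare and conclude).} Comparing with the third equation of \eqref{eq-r-VCMCF} yields
\begin{equation*}
(\lambda-1)\int_{\Gamma(t)}|H-\langle H\rangle|^2\,\d_\Gamma=0 .
\end{equation*}
Suppose the integral vanishes. By smoothness, $H$ is then constant on $\Gamma(t)$. By Alexandrov's theorem, a closed embedded surface with constant mean curvature is a round sphere, which is excluded by hypothesis. Hence the integral is positive and $\lambda(t)=1$ for every $t\in(0,T]$.

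\textbf{Main obstacle.} The delicate point is the last step. It requires $\Gamma(t)$ to be embedded, since it bounds a volume, so that Alexandrov's theorem applies. If $\Gamma(t)$ were only immersed, constant mean curvature would not force a sphere, and we would instead assume directly that $H$ is not constant.
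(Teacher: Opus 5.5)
Your proof is correct and follows essentially the same route as the paper. The paper likewise derives $\frac{d}{dt}|\Gamma(t)|=-\lambda\int_{\Gamma(t)}|H-\langle H\rangle|^2\,\d_\Gamma$ from the first two equations and compares this with the third, then simply asserts that nonconstant $H$ is equivalent to $\Gamma(t)$ not being a sphere. Your explicit mean-zero rewriting and your appeal to Alexandrov's theorem fill in details the paper leaves implicit. Alexandrov's theorem applies here because $\Gamma(t)$ is an embedded closed surface, being the diffeomorphic image of $\cM$.
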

\begin{proof}
    The first two equations in \eqref{eq-r-VCMCF}  imply
    \begin{equation*}
        \frac{d}{dt}|\Gamma(t)|=-\lambda\int_{\Gamma(t)} |H-\langle H\rangle|^2 \,\d_\Gamma.
    \end{equation*}
Comparing with the third equation in \eqref{eq-r-VCMCF}, we obtain
\begin{equation*}
    (\lambda-1)\int_{\Gamma(t)} |H-\langle H\rangle|^2 \,\d_\Gamma=0.
\end{equation*}
Hence $\lambda\equiv 1$ if $H$ is not constant on $\Gamma(t)$, which is equivalent to $\Gamma(t)$  not being a sphere.
\end{proof}

The corresponding weak formulation reads: Find  $\vX(t)\in \mathbf{H}^1(\cM)^3, H\in \mathbf{H}^1(\Gamma(t))$ and $\lambda\in \mathbb{R}$ such that
\begin{equation}\label{weak-formulation-VCMCF}
    \left\{
    \begin{aligned}
        \Big(\partial_t\vX \circ\vX^{-1}, \vxi\Big)_{\Gamma(t)}+\lambda\Big(H-\langle H\rangle, \vxi\cdot \vnu\Big)_{\Gamma(t)}\qquad &\\
        +\alpha\Big(\gM \vX,  \gM(\mathbb{P}\,\vxi\circ \vX)\Big)_{\cM}&=0, \\
        \Big(\nabla_{\Gamma(t)}\Id,\nabla_{\Gamma(t)}(\eta\vnu)\Big)_{\Gamma(t)}-\Big(H,\eta \Big)_{\Gamma(t)}&=0, \\
        \frac{d}{dt}|\Gamma(t)|+\int_{\Gamma(t)} |H-\langle H\rangle|^2 \,\d_\Gamma&=0,
    \end{aligned}
    \right.
\end{equation}
holds  $ \forall\vxi\in H^1(\Gamma(t))^3$ and $\eta\in H^1(\Gamma(t))$.

\subsection{Fully discrete scheme}
In the following we adopt the same notation as that used for surface diffusion. Suppose the approximate solution $\Gamma_h^m(m\ge 1)$   is known. Then the numerical scheme for computing $\Gamma_h^{m+1}$ reads: Find $\vu_h^{m+1}\in S(\hGam)^3, H_h^{m+\half}\in S(\hGam)$ and $\lambda_h^{m+1}\in \mathbb{R}$ such that 
\begin{equation}\label{CN-scheme-VCMCF}
    \left\{
    \begin{aligned}
        \left(\frac{\vu_h^{m+1}-\vu_h^m}{\tau_{m+1}} ,\vxi_h\right)^h_{\hGam}+\alpha\Big(\gMh \vX_h^{m+\half},
        \gMh\cI_h\big(\mathbb{P}_{h}^{m+\half}\vxi_h\circ \hX_h^{m+\half}\big)\Big)_{\cM_h}&\\
            +\lambda_h^{m+1}\left(
                H_h^{m+\half}-\langle {H}_h^{m+\half}\rangle,\cI_h\left[
                    \frac{\vxi_h\cdot\pi_h\tn_h^{m+\half}}{|\pi_h\tn_h^{m+\half}|^2}\right]\right)_{\hGam}=0,&\\
        \left(\nabla_{\hGam}\frac{\vu_h^{m+1}+\vu_h^m}{2}, \nabla_{\hGam}(\eta_h\vnu_h^{m+\half})\right)_{\hGam}
            =\left(H_h^{m+\half},\eta_h\right)^h_{\hGam}& ,\\
        \frac{|\Gamma_h^{m+1}|-|\Gamma_h^m|}{\tau_{m+1}}
            + \left(H_h^{m+\half}-\langle H_h^{m+\half}\rangle ,H_h^{m+\half}-\langle H_h^{m+\half}\rangle \right)_{\hGam}=0,&
    \end{aligned}
    \right.
\end{equation}
holds $\forall \vxi_h\in S(\hGam)^3$ and $\eta_h\in S(\hGam)$. In the formulation above, $\langle H_h^{m+\half}\rangle$ is the averaged value of the curvature $H_h^{m+\half}$ over $\hGam$, given by 
\begin{equation*}
    \langle H_h^{m+\half}\rangle:=\frac{1}{|\hGam|}\int_{\hGam} H_h^{m+\half} \,\d_{\Gamma_h}.
\end{equation*}

For $m=0$, similar to \eqref{CN-scheme-start}, we replace $\hat{\Gamma}_h^{\half}$ by the unknown $\Gamma_h^{\half}$ to be determined by the iteration.
\begin{theorem}
    Let $V_h^{m}$ denote the volume enclosed by $\Gamma_h^{m}$. Then scheme \eqref{CN-scheme-VCMCF} satisfies $V_h^{m+1}=V_h^{m}=\cdots=V_h^1=V_h^0$ and $|\Gamma_h^{m+1}|\le |\Gamma_h^{m}|\le \cdots\le |\Gamma_h^1|\le |\Gamma_h^0|$.
\end{theorem}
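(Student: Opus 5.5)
The plan is to follow the proof of \Cref{thm-Vol-cons} almost verbatim. The volume identity \eqref{Vol-cons} is purely geometric: it depends only on the linear interpolation $\vz(\theta)=(1-\theta)\vu_h^m+\theta\vu_h^{m+1}$ between consecutive surfaces and on the definition of $\tn_h^{m+\half}$, not on the PDE. It therefore carries over unchanged to \eqref{CN-scheme-VCMCF}. Concretely, $\mJ_k(\theta)$ is quadratic in $\theta$, so Simpson's rule is exact, and after transplanting to $\hGam$ we get
\begin{equation*}
V_h^{m+1}-V_h^{m}=\int_{\hGam}(\vu_h^{m+1}-\vu_h^m)\cdot\tn_h^{m+\half}\,\d_{\Gamma_h}=\left(\vu_h^{m+1}-\vu_h^m,\pi_h\tn_h^{m+\half}\right)^h_{\hGam},
\end{equation*}
where the last equality is the defining property of the $L^2$-projection $\pi_h$.

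It then remains to show that the right-hand side vanishes, and for this I would test the first equation of \eqref{CN-scheme-VCMCF} with $\vxi_h=\pi_h\tn_h^{m+\half}$.

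\textbf{The $\alpha$-term.} By \eqref{def-P}, $\mathbb{P}_h^{m+\half}\pi_h\tn_h^{m+\half}=\vec 0$ nodewise, exactly as in the computation preceding \eqref{Vol-cons}. Hence $\cI_h(\mathbb{P}_h^{m+\half}\vxi_h\circ\hX_h^{m+\half})=\vec 0$ and this term drops out.

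\textbf{The Lagrange-multiplier term.} Unlike surface diffusion, this term involves no gradient, so I cannot simply use that the gradient of a constant is zero. Instead, the interpolant $\cI_h\big[\vxi_h\cdot\pi_h\tn_h^{m+\half}/|\pi_h\tn_h^{m+\half}|^2\big]$ equals $1$ at every node, hence is identically $1$. The term then reduces to
\begin{equation*}
\lambda_h^{m+1}\int_{\hGam}\big(H_h^{m+\half}-\langle H_h^{m+\half}\rangle\big)\,\d_{\Gamma_h},
\end{equation*}
which vanishes because $\langle H_h^{m+\half}\rangle$ is the exact ($L^2$, not mass-lumped) average over the same surface $\hGam$. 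This is the one point requiring care: the average must be taken in the same inner product as the $\lambda$-term, and here both are standard $L^2$ on $\hGam$. Moreover, the test function $\pi_h\tn_h^{m+\half}$ must lie in $S(\hGam)^3$, which holds since $\pi_h$ maps into that space.

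Together these give $(\vu_h^{m+1}-\vu_h^m,\pi_h\tn_h^{m+\half})^h_{\hGam}=0$, hence $V_h^{m+1}=V_h^m$ for $m\ge1$. For $m=0$ I would repeat the argument with $\hat\Gamma_h^\half$ replaced by $\Gamma_h^\half$ and $\tn_h^\half$ as in \eqref{tn-start}; the average is then taken over $\Gamma_h^\half$, and the same cancellation holds. Induction yields the chain of equalities.

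The energy statement is immediate from the third equation of \eqref{CN-scheme-VCMCF}:
\begin{equation*}
|\Gamma_h^{m+1}|-|\Gamma_h^m|=-\tau_{m+1}\big\|H_h^{m+\half}-\langle H_h^{m+\half}\rangle\big\|^2_{L^2(\hGam)}\le 0,
\end{equation*}
and similarly for the starting step. I expect no real obstacle beyond checking the cancellation of the mean-curvature term described above.
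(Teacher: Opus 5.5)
Your proposal is correct and follows the paper's proof: test the first equation of \eqref{CN-scheme-VCMCF} with $\vxi_h=\pi_h\tn_h^{m+\half}$, reuse the Simpson-rule identity \eqref{Vol-cons} from the surface-diffusion case, observe that the $\alpha$-term drops and that $\big(H_h^{m+\half}-\langle H_h^{m+\half}\rangle,1\big)_{\hGam}=0$, and read off energy decay from the last equation. Your caution about matching inner products is harmless but not needed, since for a piecewise linear function the mass-lumped and exact integrals against the constant $1$ coincide.
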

\begin{proof}
    Take $\vxi_h=\pi_h \tn_h^{m+\half}$ in the first equation of \eqref{CN-scheme-VCMCF} and follow the same procedures as for the surface diffusion, with  noting that 
    \begin{equation*}
        \big( H_h^{m+\half}-\langle H_h^{m+\half}\rangle,1\big)_{\hGam}=\big( H_h^{m+\half},1\big)_{\hat{\Gamma}_h^{m+\half}}-|\hGam| \langle H_h^{m+\half}\rangle=0,
    \end{equation*}
    the desired volume-preserving property follows. The energy-decaying property follows directly from the last equation of \eqref{CN-scheme-VCMCF}.
\end{proof}
\begin{remark}
    As an alternative, one can also apply the following Euler scheme to start the computation:
    \begin{equation}\label{Euler-scheme-VCMCF}
        \left\{
        \begin{aligned}
            \left(\frac{\vu_h^{1}-\vu_h^0}{\tau_{1}} ,\vxi_h\right)^h_{\Gamma_h^0}+     \left( H_h^{1}-\langle {H}_h^{1}\rangle,\cI_h\left[\frac{\vxi_h\cdot\pi_h\tn_h^{1}}{|\pi_h\tn_h^{1}|^2}\right]\right)_{\Gamma_h^0}&\\
            +\alpha\Big(\gMh \vX_h^{1},  \gMh\cI_h(\mathbb{P}_{h}^1\vxi_h\circ \vX_h^0)\Big)_{\cM_h}&=0, \quad \forall \vxi_h\in S(\Gamma_h^0)^3\\
            \left(\nabla_{\Gamma_h^0}\vu_h^{1}, \nabla_{\Gamma_h^0}(\eta_h\vnu_h^{0})\right)_{\Gamma_h^0}
            -\left(H_h^{1},\eta_h\right)^h_{\Gamma_h^0}&=0 ,\quad \forall \eta_h\in S(\Gamma_h^0)\\
            \frac{|\Gamma_h^{1}|-|\Gamma_h^0|}{\tau_1}+\left(\nabla_{\Gamma_h^0}H_h^{1},\nabla_{\Gamma_h^0}H_h^{1}\right)_{\Gamma_h^0}&=0,
        \end{aligned}
        \right.
    \end{equation}
    where $\tn_h^1$ and $\mathbb{P}_{h}^1$ are given in \eqref{Euler-n-P}. Numerical results indicate that using the Euler scheme \eqref{Euler-scheme-VCMCF} or the scheme mentioned above to initialize the method leads to no significant difference. 
\end{remark}

\subsection{Iterative method}
To solve the nonlinear system \eqref{CN-scheme-VCMCF}, we adopt an iterative method similar to \eqref{CN-scheme-linear}.
The initial guess is chosen as
\[(\vu_{h}^{m+1,0},H_{h,0}^{m+\half},\lambda_{h,0}^{m+1})=(\vu_{h}^{m},H_{h}^{m-\half},\lambda_{h}^{m})\]
and the solution is updated by successively solving the following linear system:
\begin{equation}\label{VCMCF-iteration}
    \left\{
    \begin{aligned}
        \left(\frac{\vu_{h,l+1}^{m+1}-\vu_h^m}{\tau_{m+1}} ,\vxi_h\right)^h_{\hGam}+\alpha\Big(\gMh \vX_{h,l+1}^{m+\half},
        \gMh\cI_h\big(\mathbb{P}_{h}^{m+\half}\vxi_h\circ \hX_h^{m+\half}\big)\Big)_{\cM_h}&\\
            + \lambda_{h,l+1}^{m+1}\left(
                H_{h,l}^{m+\half}-\langle H_{h,l}^{m+\half}\rangle,
                \cI_h\left[\frac{\vxi_h\cdot\pi_h\tn_{h,l}^{m+\half}}{|\pi_h\tn_{h,l}^{m+\half}|^2}\right]\right)_{\hGam}&\\
            +\lambda_{h,l}^{m+1}\left(H_{h,l+1}^{m+\half}- H_{h,l}^{m+\half},
                \cI_h\left[\frac{\vxi_h\cdot\pi_h\tn_{h,l}^{m+\half}}{|\pi_h\tn_{h,l}^{m+\half}|^2}\right]\right)_{\hGam}=0,&\\
        \left(\nabla_{\hGam}\frac{\vu_{h,l+1}^{m+1}+\vu_h^m}{2}, \nabla_{\hGam}\cI_h(\eta_h\vnu_{h,l}^{m+\half})\right)_{\hGam}
        =\left(H_{h,l+1}^{m+\half},\eta_h\right)^h_{\hGam},& \\
        \frac{|\Gamma_{h,l}^{m+1}|-|\Gamma_h^m|}{\tau_{m+1}}-\left(H_{h,l}^{m+\half}- \langle H_{h,l}^{m+\half} \rangle,H_{h,l}^{m+\half}-\langle H_{h,l}^{m+\half}\rangle \right)_{\hGam}&\\
        +\frac{1}{\tau_{m+1}} \int_{\Gamma_{h,l}^{m+1}}\nabla_{\Gamma_{h,l}^{m+1}}\cdot\big(\vu_{h,l+1}^{m+1}-\vu_{h,l}^{m+1}\big)\,\d_{\Gamma_h}&\\
        +2\left(H_{h,l+1}^{m+\half}- \langle H_{h,l+1}^{m+\half} \rangle, H_{h,l}^{m+\half}- \langle H_{h,l}^{m+\half} \rangle\right)_{\hGam}=0,&
    \end{aligned}
    \right.
\end{equation}
where 
\begin{equation*}
    \langle H_{h,l}^{m+\half} \rangle=\frac{1}{|\hGam|}\int_{\hGam}H_{h,l}^{m+\half} \,\d_{\Gamma_h}.
\end{equation*}
 For $m=0$, we apply a similar procedure where the integral domain $\hat{\Gamma}_h^{\half}$ is replaced by the image of $\vX_{h,l}^{\half}$ at the $(l+1)$th iteration with the initial domain $\Gamma_h^0$.  We stop the iteration and move to the next step if  $\max |\vu_{l+1}^{m+1}-\vu_{h,l}^{m+1}|\le tol$.

\section{Numerical examples}\label{Se-NE}
In this section, we present several numerical experiments for surface diffusion and the  volume-preserving mean curvature flow to demonstrate the volume-preserving and energy-decaying properties of our method, as well as the advantage of mesh distribution. Throughout this section, $N_p$ and $N_T$ denote the numbers of vertices and plane triangles in the mesh, respectively. Moreover, let  $U(t_m)$ denote the numerical solution at time $t=t_m$, with the $j$th component $U_j(t_m)\in \mathbb{R}^{3}$ representing the position of the $j$th vertex. The mesh quality of a two-dimensional polyhedral surface $\cN_h$ is measured by (see e.g., \cite[eq. (7.3)]{Elliott-Fritz-2017})
    \begin{equation}\label{msh_q}
        \sigma_{\rm max}(\cN_h)=\max_{T\in \cN_h}\frac{\mbox{diam } B^*(T)}{\mbox{diam } B_*(T)},
    \end{equation}
    where $B^*(T)$ and $B_*(T)$ denote the  circumcircle and the maximal inscribed circle of the triangle $T$, respectively. 
    
In addition, \Cref{prop-SDF} and \Cref{prop-MCF} indict that the Lagrange multiplier $\lambda$ can be arbitrary when $\Gamma(t)$ becomes a sphere. In practice, the iterative system will approach singular if the numerical surface is close to a sphere. This is also the case suffered in \cite{Garcke-Jiang-2025}. Similar to their strategy we remove $\lambda$ and the last equation in our schemes then solve the resulted systems. An empirical and sufficient criterion for such operation is when the energy decaying rate satisfies  $\frac{1}{\tau_{m}}\big({|\Gamma_h^{m}|-|\Gamma_h^{m-1}|}\big)\le 10^{-1}$. We utilized this rule in \Cref{ex-SDF-cuboid} and \Cref{ex-box-MCF}.

\subsection{Surface diffusion}
\begin{example}\label{ex-con_SDF}
    As the first example, we investigate the temporal convergence rates to verify the second-order accuracy. The initial surface is a $2:1:1$ ellipsoid whose mesh is generated by scaling the $x$-axis of an triangulated unit sphere by a factor of $2$. This triangulated sphere is used as the reference surface $\cM_h$, and we simply set $\alpha=1$ in all computations for this example. See \Cref{fig-ellipsoid-2-1-1} (a) for the reference mesh and \Cref{fig-ellipsoid-2-1-1} (b) for the initial mesh. The iterative tolerance we set in this example is  $tol=5\times 10^{-12}$.
    
    To test the temporal convergence rates, we fix the spatial mesh with $(N_p,N_T)=(1320,2636)$ to eliminate the pollution from spatial discretization errors, and run the scheme using several uniform time step sizes. We define 
    \begin{equation}\label{def-Err}
        {\rm Error}_t(t;\tau)=\max_{j}\left|U_j^{\tau}(t)-U_j^{\tau/2}(t)\right|,
    \end{equation}
where the superscript for $U(t)$ indicates the time stepsize. The temporal convergence rates is then computed by
\begin{equation}\label{rate_t}
    \mbox{Convergence rate}=\frac{\ln\left({\rm Error}_t(t;\tau)/{\rm Error}_t(t;\tau/2)\right)}{\ln 2}.
\end{equation}
We list ${\rm Error_t}(0.5;\tau)$ for various $\tau$ in \Cref{con-t-ex1}, by using \eqref{CN-scheme-start} and \eqref{Euler-scheme} as the starting step, respectively. One can observe  a second-order convergence rate in time when \eqref{CN-scheme-start} is applied at the first step, while the rate is reduced to about $1.5$ if \eqref{Euler-scheme} is chosen. Next we fix $\tau=1/200$ and examine the convergence of $\lambda_h$ with respect to the spatial mesh size at $t=0.3$ (slightly far away from being a sphere). The sequence of spatial mesh used is
\begin{equation}\label{sq-mesh}
    (N_p,N_T)=(1806,3608),\quad (3816,7628),\quad (7446,14888), \quad (11856,23708).
\end{equation}
Since the exact value of $\lambda$ is $1$ in the continuous model, we define the error of $\lambda$  by  ${\rm{Error}}_\lambda(h)=|\lambda_h-1|$ and compute the convergence rate by
\begin{equation}\label{rate-lam}
    \mbox{Convergence rate}=\frac{\ln\left({\rm Error}_\lambda(h_0)/{\rm Error}_\lambda(h'_0)\right)}{\ln (h_0/h'_0)}
\end{equation}
where $h_0$ and $h'_0$ are the maximal element diameters of two successive initial meshes. From \Cref{con-h-lam-SDF} we observe that $\lambda_h$ converges at an $\mO(h_0^2)$ rate as $h_0\rightarrow 0^+$. 

\Cref{fig-ellipsoid-2-1-1-EVI} displays the surface area, relative volume loss with respect to time, and the number of iterations at each step under different step sizes. As expected, the surface area decreases monotonically, while the enclosed volume is preserved up to a tolerance of $\mO(10^{-14})$. One also can observe the iterative number does not increase significantly when we reduce the step size. It costs more iterations at the first step since \eqref{CN-scheme-linear} is fully nonlinear.

\begin{table}[htp!]
	\centering
	\caption{\Cref{ex-con_SDF}: {${\rm Error}_t(0.5;\tau)$} under different $\tau$, $\tau_0=6.25\times 10^{-3}$.}\label{CT-SDF-t}
	\begin{tabular}{cccccc}
		\toprule
		$\tau$            & $\tau_0$        & $\tau_0/2$     & $\tau_0/4$   & $\tau_0/8$ & $\tau_0/16$\\ 
		\toprule
		Error (using \eqref{CN-scheme-start})    
		&  6.251e-04  & 1.013e-04 &  7.601e-06  & 1.828e-06   & 4.531e-07 \\ 
		Convergence rate  & --     &2.63&   3.74& 2.06  & 2.01  \\ 
		\bottomrule
		Error (using \eqref{Euler-scheme})
		&   1.306e-04  & 5.001e-05 &  1.884e-05  & 6.919e-06  & 2.497e-06  \\ 
		Convergence rate  & --     &1.39 &    1.41  &    1.45  &    1.47  \\ 
		\bottomrule
	\end{tabular}\label{con-t-ex1}
\end{table}    
\begin{table}[htp!]
    \centering
    \caption{\Cref{ex-con_SDF}: {${\rm Error}_\lambda(h)$} under different spatial meshes at $t=0.3$.}
    \label{con-h-lam-SDF}
    \begin{tabular}{ccccc}
        \toprule

        $h_0$             & 2.35e-01 &  1.44e-01 &  1.05e-01 &  8.55e-02 \\ 
        \toprule

        Error & 1.318e-02  & 6.478e-03 &  3.368e-03  & 2.102e-03 \\
        Convergence rate &     -- &   1.44 &   2.05 &   2.36 \\
        \bottomrule

    \end{tabular}
\end{table}    
\begin{figure}[!htp]
    \centering
    \subfigure[Reference surface]{\includegraphics[width=.26\textwidth,trim=0 140 0 170, clip]{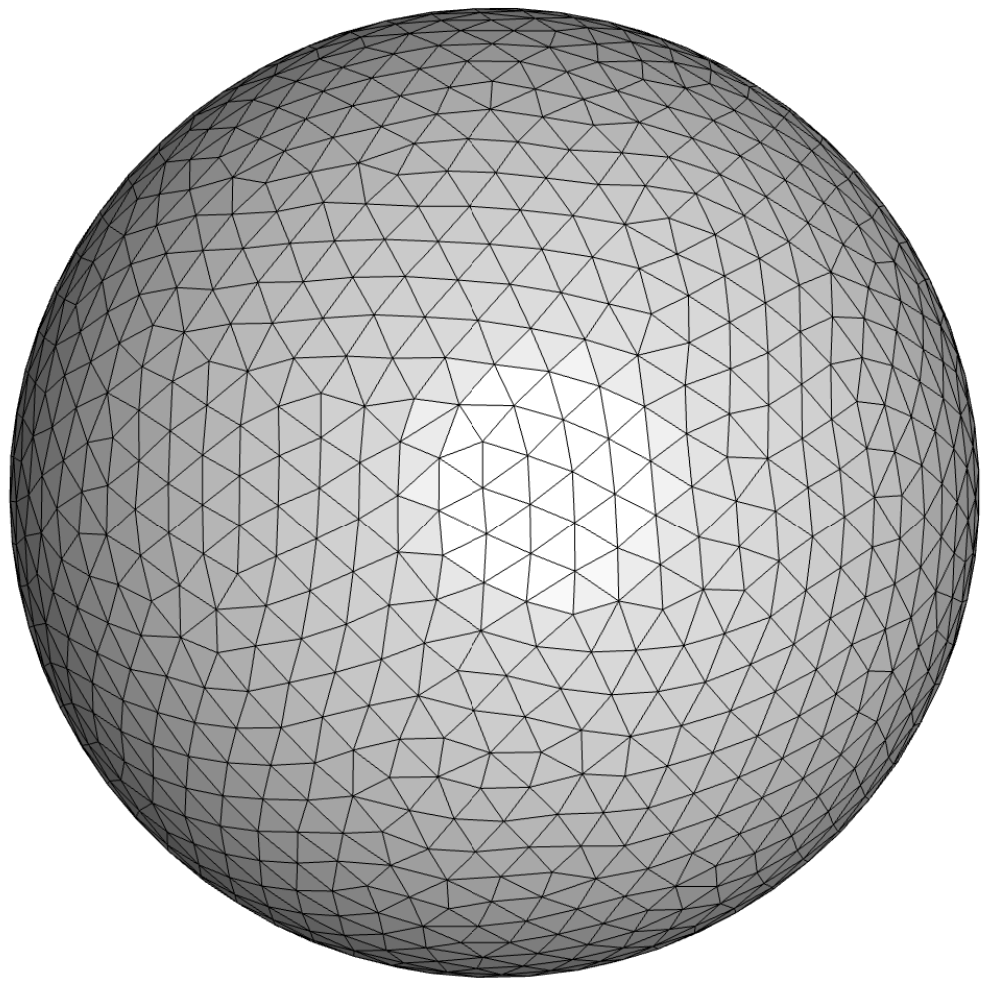}}
    \subfigure[Initial surface]{\includegraphics[width=.36\textwidth,trim=0 220 0 180, clip]{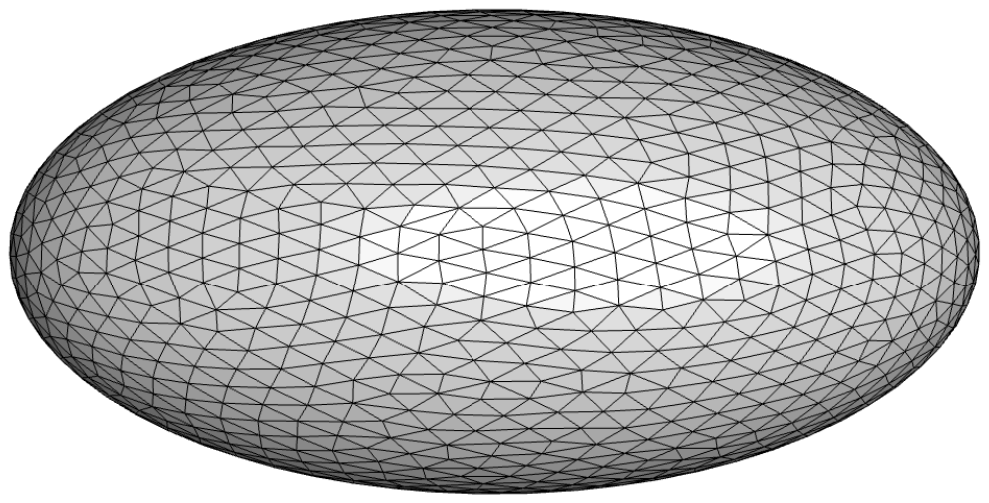}}
    \subfigure[$t=0.5$]{\includegraphics[width=.26\textwidth,trim=0 140 0 170, clip]{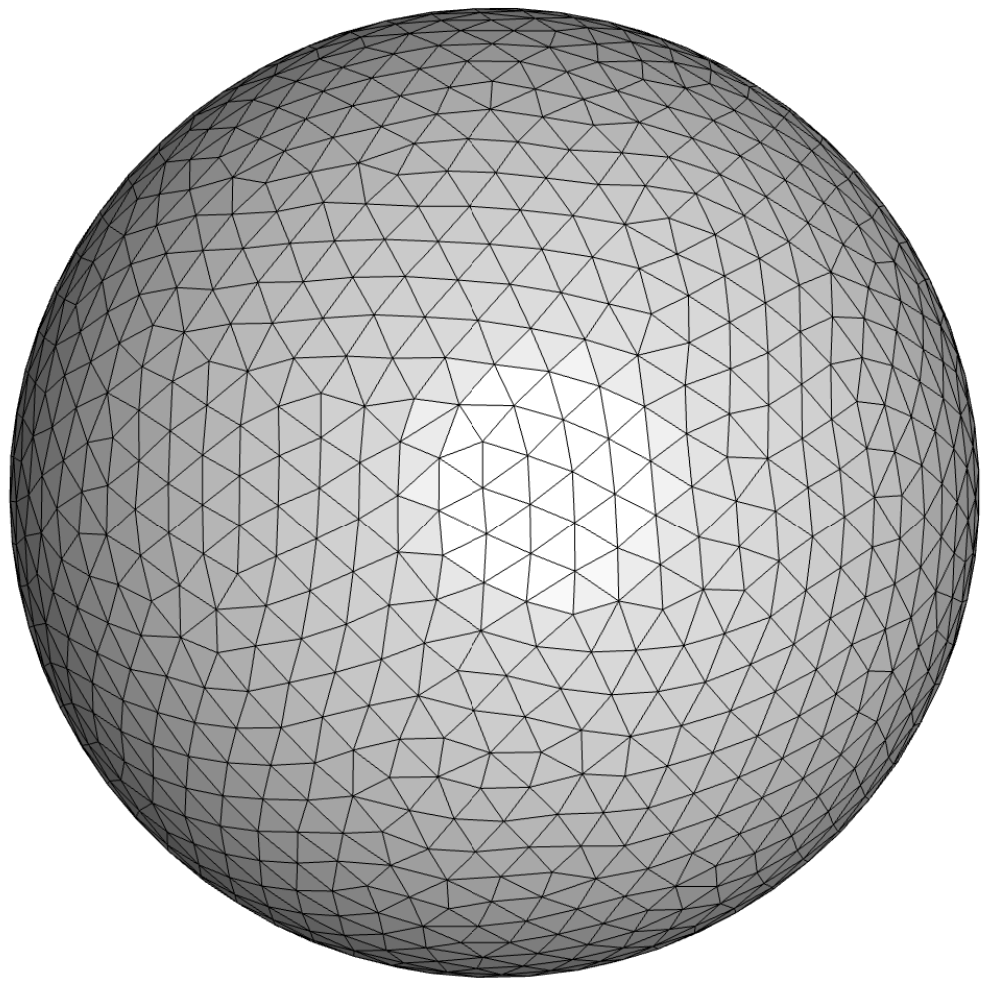}}
    \caption{Reference surface (a), initial surface (b),  and numerical surface at $t=0.5$ (c) for \Cref{ex-con_SDF} with $(N_p,N_T)=(1320,2636)$, the images are rescaled.}
    \label{fig-ellipsoid-2-1-1}
\end{figure}

\begin{figure}[!htp]
    \centering
    \subfigure[Surface area]{\includegraphics[width=.32\textwidth,trim=0 200 10 220, clip]{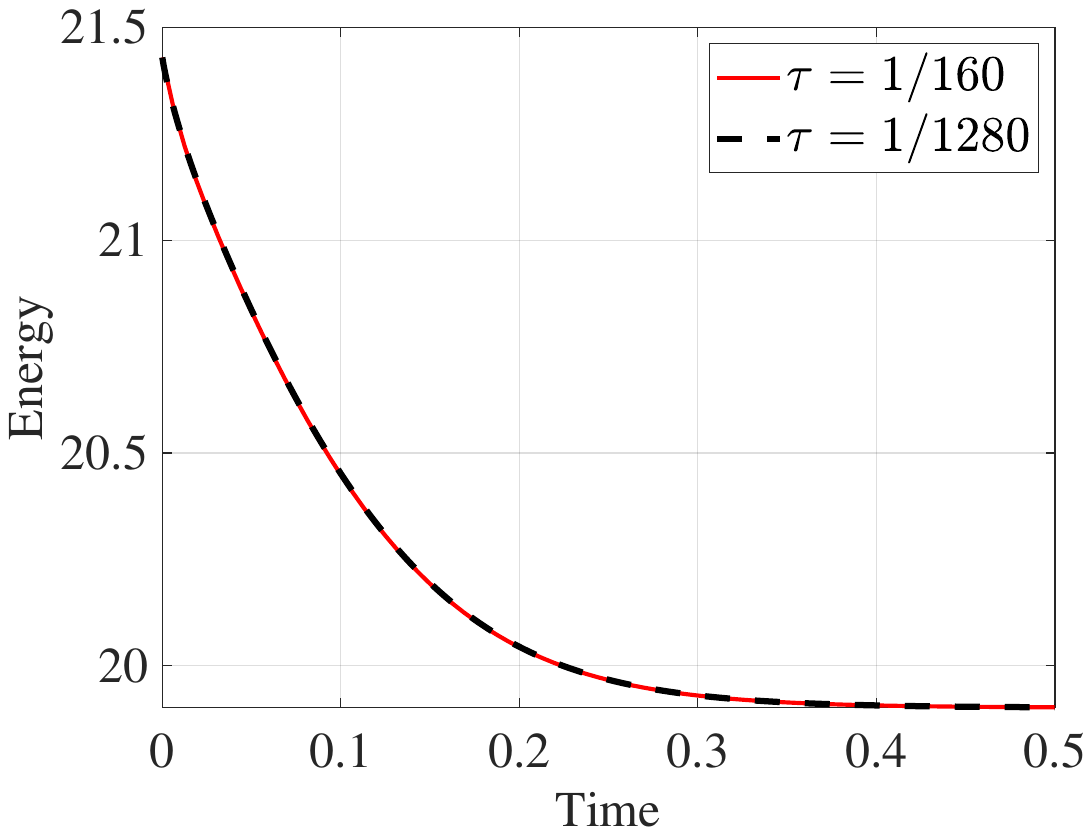}}
    \subfigure[Volume]{\includegraphics[width=.32\textwidth,trim=0 200 10 210, clip]{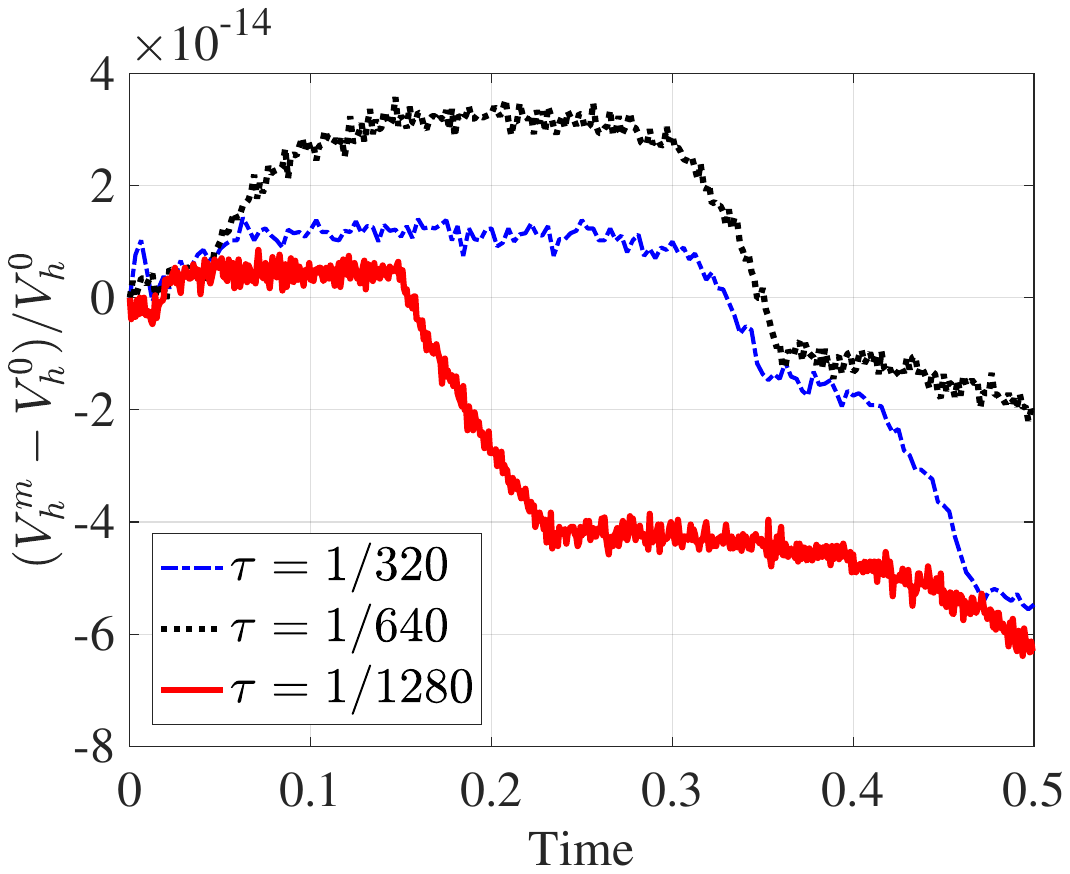}}
    \subfigure[Iterations]{\includegraphics[width=.32\textwidth,trim=0 200 10 220, clip]{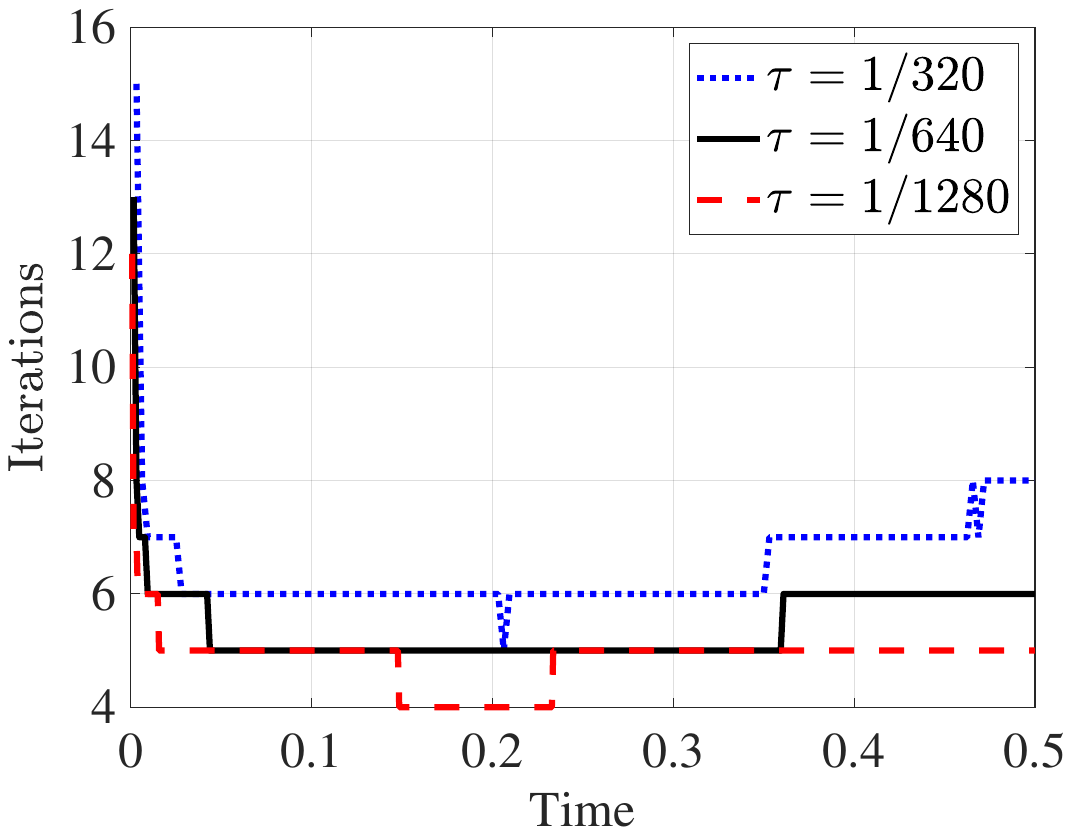}}
    \caption{Energy with respect to time (a), relative volume loss (b),  and iterations at each step (c) for \Cref{ex-con_SDF} with $(N_p,N_T)=(1320,2636)$ and $tol=5\times 10^{-12}$.}
    \label{fig-ellipsoid-2-1-1-EVI}
\end{figure}
\end{example}

\begin{example}\label{ex-SDF-H}
    In this example, we consider the surface diffusion of an H-shaped surface whose initial mesh is shown in \Cref{fig-H-initial} (a) with $(N_p,N_T)=(3153,6302)$. We compare our scheme with the SP-Euler and SP-CN schemes proposed in \cite{Garcke-Jiang-2025}. Since the initial surface has sharp corners, we run all the schemes under graded mesh in time whose nodes are given by $t_m = T(\frac{m}{M})^2,\, m=0,1\cdots, M$. We set $T=1$, $M=2000$ and $tol=1\times 10^{-10}$ in this example.

    Numerical results of different schemes for $t=0.04$, $t=0.16$ and $t=1$ are presented in \Cref{fig-H-SDF} where we set $\alpha=10$ in our scheme. One can see that meshes produced by our scheme are better than those of SP-Euler and SP-CN schemes at $t=1$. For the SP-CN scheme, instead of extrapolation, they applied the original BGN scheme to predict $\hGam$. So we present the number of solvers that different schemes cost in \Cref{fig-H-SDF} for comparison. One can see that except the first step where we used a fully nonlinear scheme, the cost of our method is at the same level as theirs.
    
     \Cref{fig-H-cmp} reports the surface area, enclosed volume, and mesh quality versus time for the above mentioned three schemes. We used a log-scaled $x$-axis for time to plot the energy since it decreases very fast at the beginning. One can see that all curves of energy match each other very well except the beginning parts when the energy decreases very fast.  The relative volume loss of our method is $\mO(10^{-12})$ while theirs is $\mO(10^{-15})$. This is because in their method, the enclosed volume is forced to be constant by an extra Lagrange multiplier which means only machine errors involved. \Cref{fig-H-cmp} (c) presents the mesh quality index $\sigma_{{\rm max}}$ as a function of time. For our method, the mesh of the concave part undergoes compression in one direction and stretching in another direction, so $\sigma_{{\rm max}}$ increases at the beginning. Due to the continuous effort of the harmonic map heat flow, $\sigma_{{\rm max}}$ reduces to the same level as $\sigma_{{\rm max}}(0)$. Compared with our method, the meshes produced by SP-Euler and SP-CN schemes keep getting worse. In fact, $\sigma_{\rm max}(1)\approx 772$ for the SP-CN scheme.

\begin{figure}[!htp]
    \centering
    \subfigure[Initial mesh]{\includegraphics[width=0.5\textwidth, trim=450 245 380 210, clip]{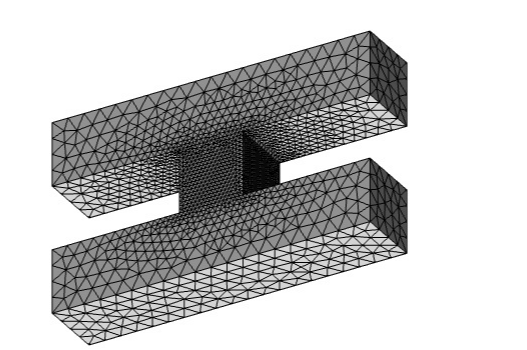}}

    \subfigure[Number of Solvers]{\includegraphics[width=0.35\textwidth]{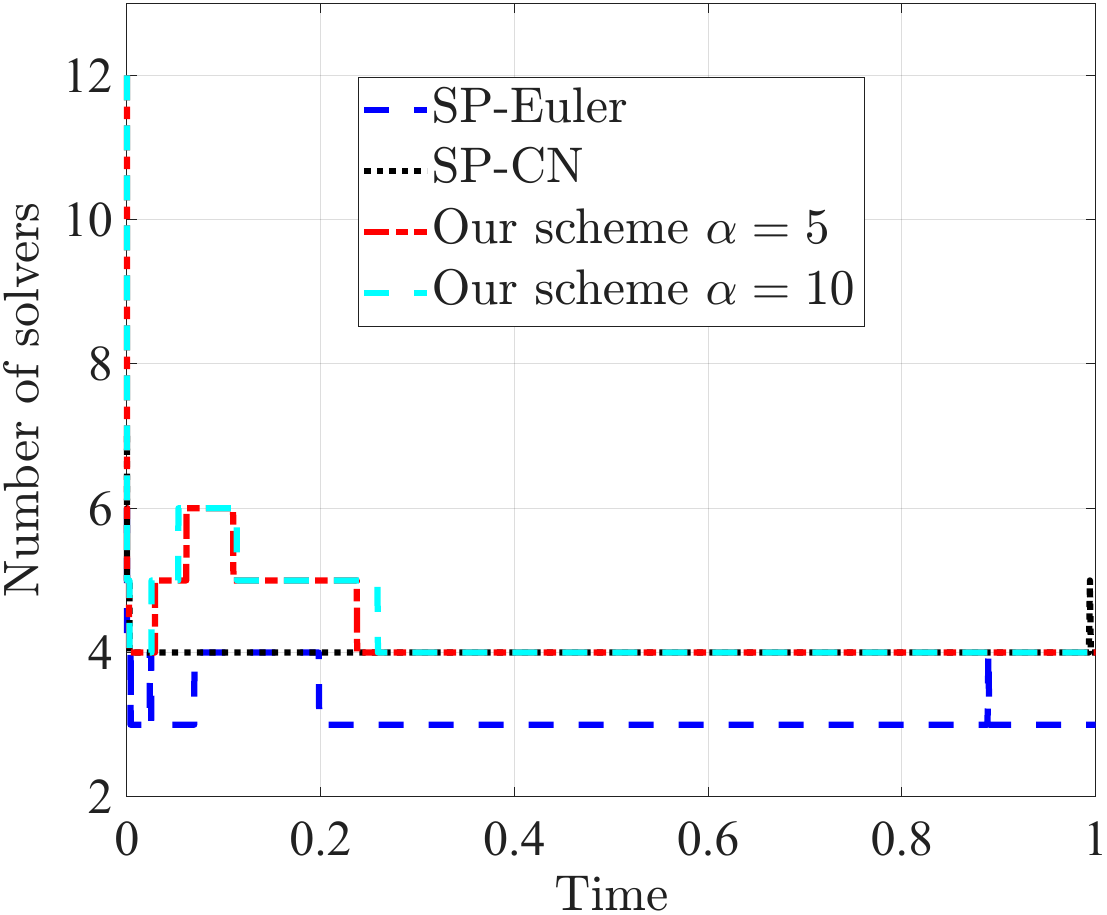}}
    \caption{Initial mesh and number of solvers of different schemes for \Cref{ex-SDF-H}.}
    \label{fig-H-initial}
\end{figure}
\begin{figure}[!htp]
    \centering
    \subfigure[SP-Euler:   $t=0.04$]{\includegraphics[width=0.32\textwidth, trim=250 200 250 180, clip]{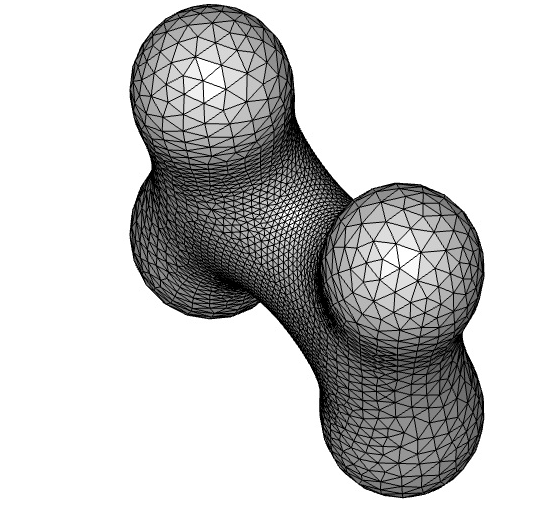}}
    \subfigure[SP-Euler:   $t=0.16$]{\includegraphics[width=0.26\textwidth, trim=250 180 250 150, clip]{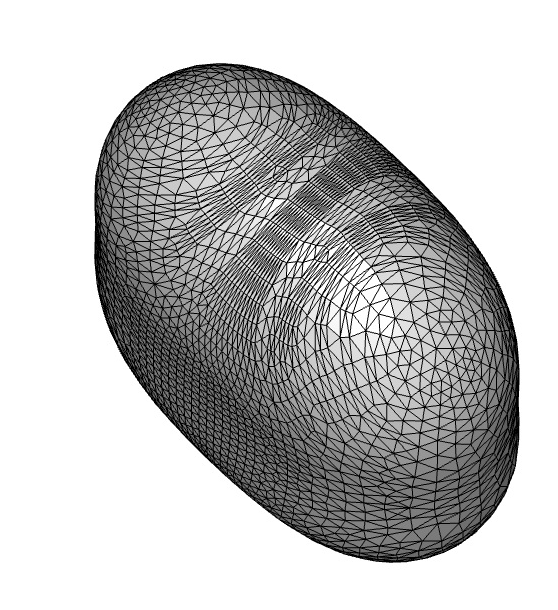}}
    \subfigure[SP-Euler:      $t=1$]{\includegraphics[width=0.26\textwidth, trim=250 160 200 150, clip]{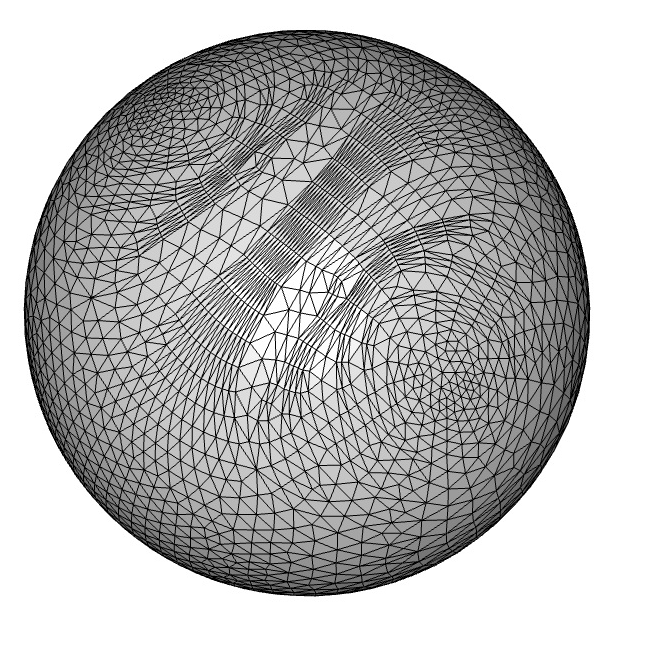}} \\
    \subfigure[SP-CN:      $t=0.04$]{\includegraphics[width=0.32\textwidth, trim=250 200 250 180, clip]{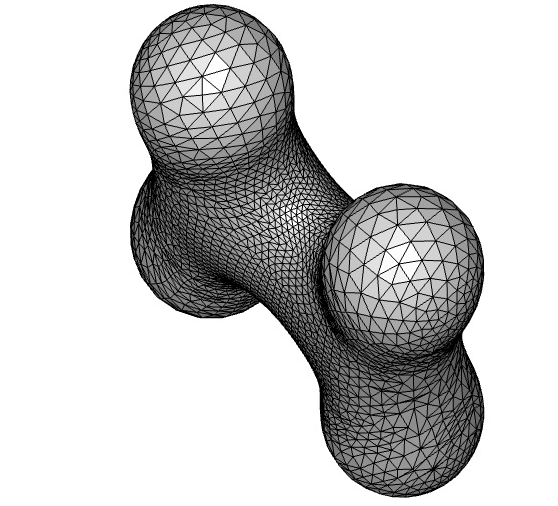}}              
    \subfigure[SP-CN:      $t=0.16$]{\includegraphics[width=0.26\textwidth, trim=250 180 250 150, clip]{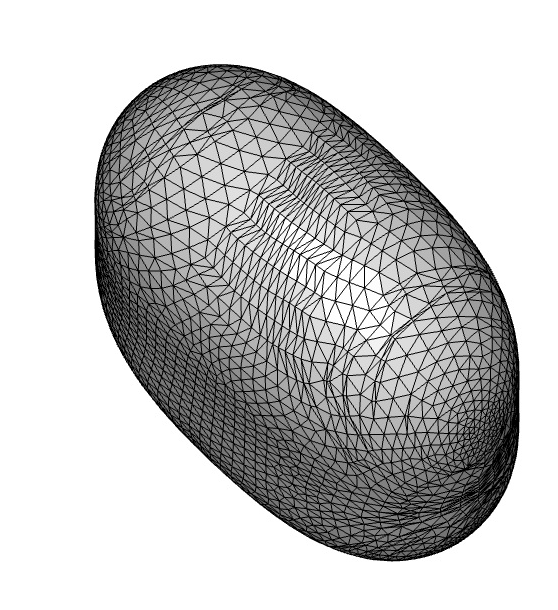}}              
    \subfigure[SP-CN:         $t=1$]{\includegraphics[width=0.26\textwidth, trim=250 160 200 150, clip]{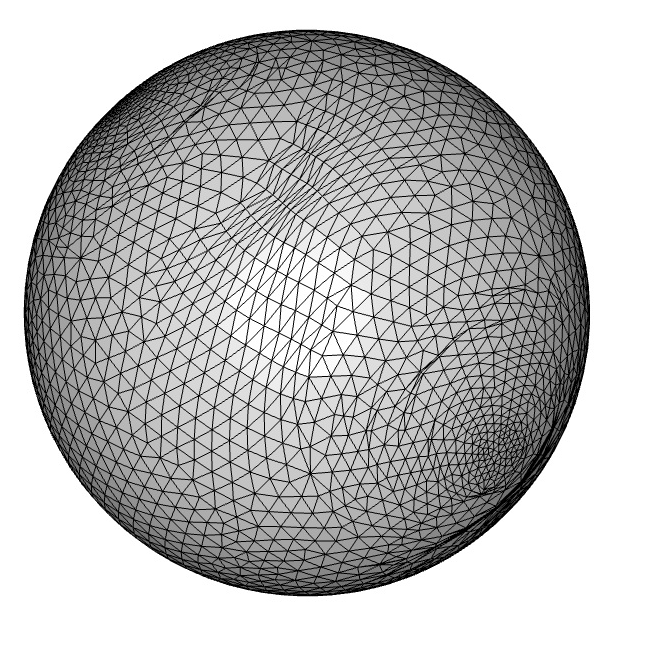}}        \\      
    \subfigure[Our scheme: $t=0.04$]{\includegraphics[width=0.32\textwidth, trim=250 200 250 180, clip]{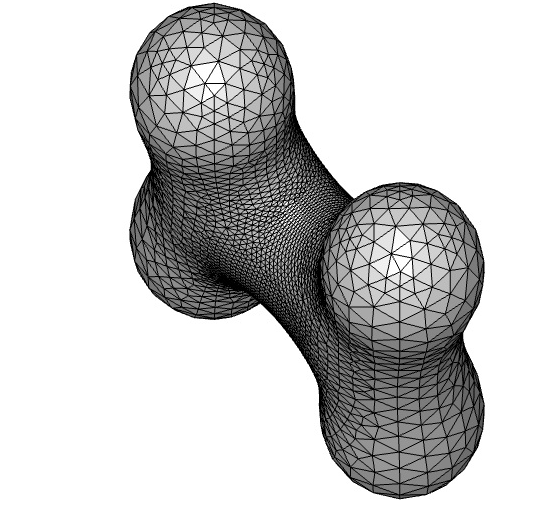}}              
    \subfigure[Our scheme: $t=0.16$]{\includegraphics[width=0.26\textwidth, trim=250 180 250 150, clip]{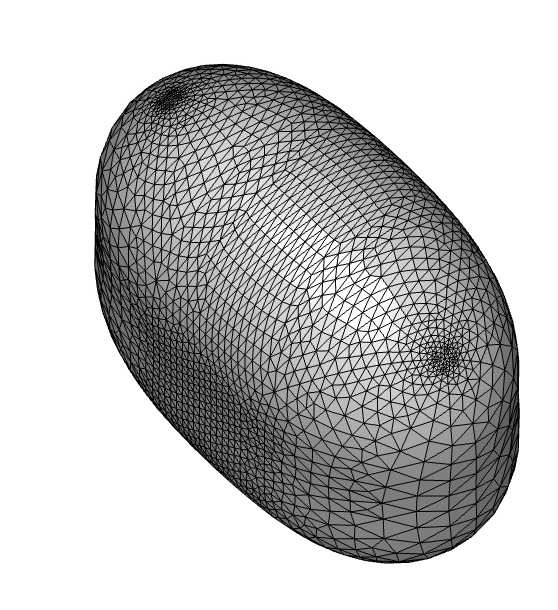}}                 
    \subfigure[Our scheme:    $t=1$]{\includegraphics[width=0.26\textwidth, trim=250 160 200 150, clip]{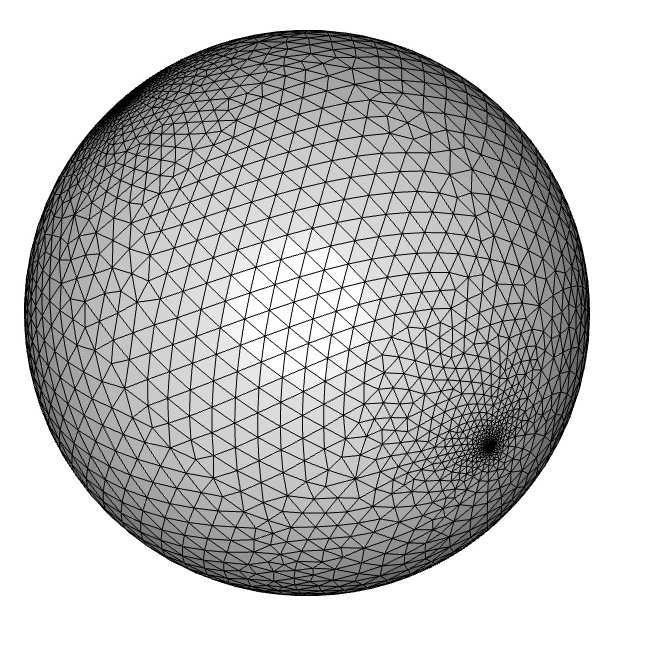}} \\

    \caption{Numerical results of difference schemes for \Cref{ex-SDF-H}.}
    \label{fig-H-SDF}
\end{figure}

\begin{figure}[!htp]
    \centering
    \subfigure[Surface area                      ]{\includegraphics[width=0.3\textwidth]{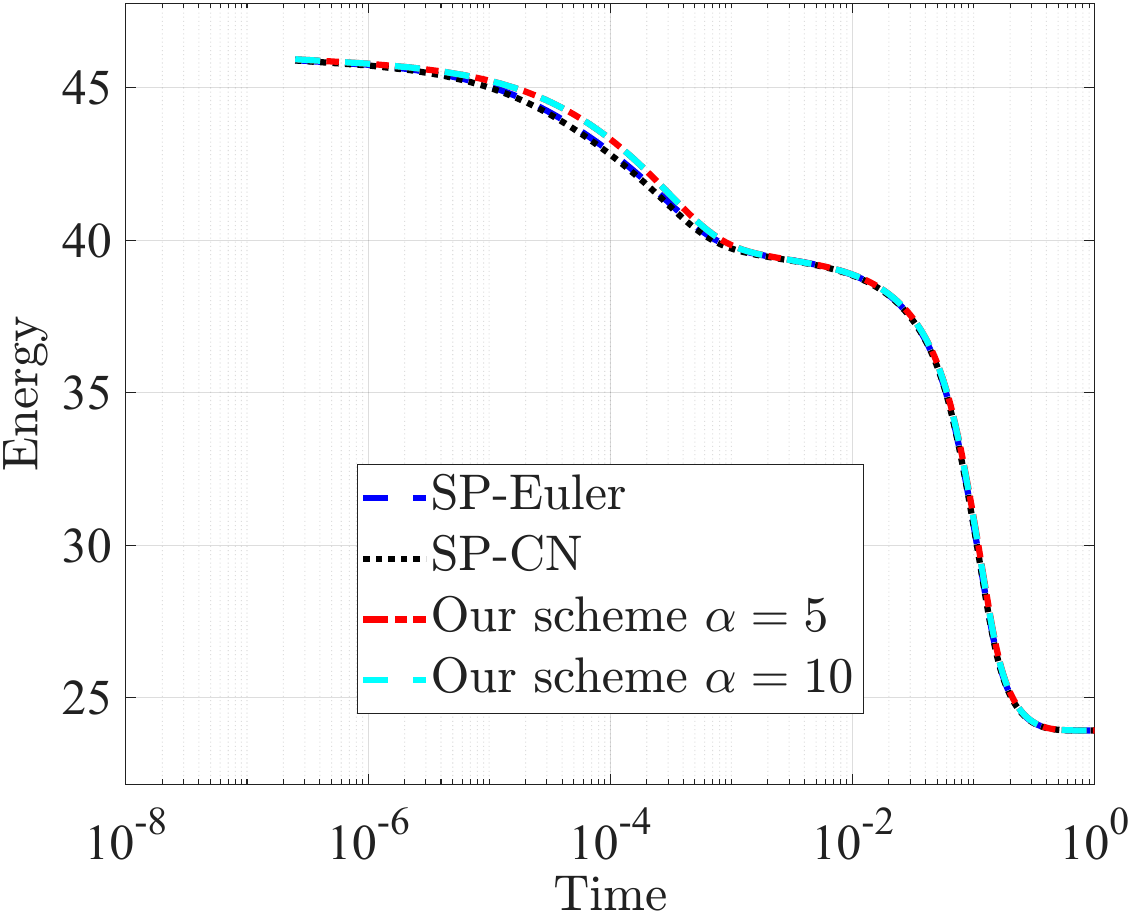}}
    \subfigure[Volume                 ]{\includegraphics[width=0.3\textwidth]{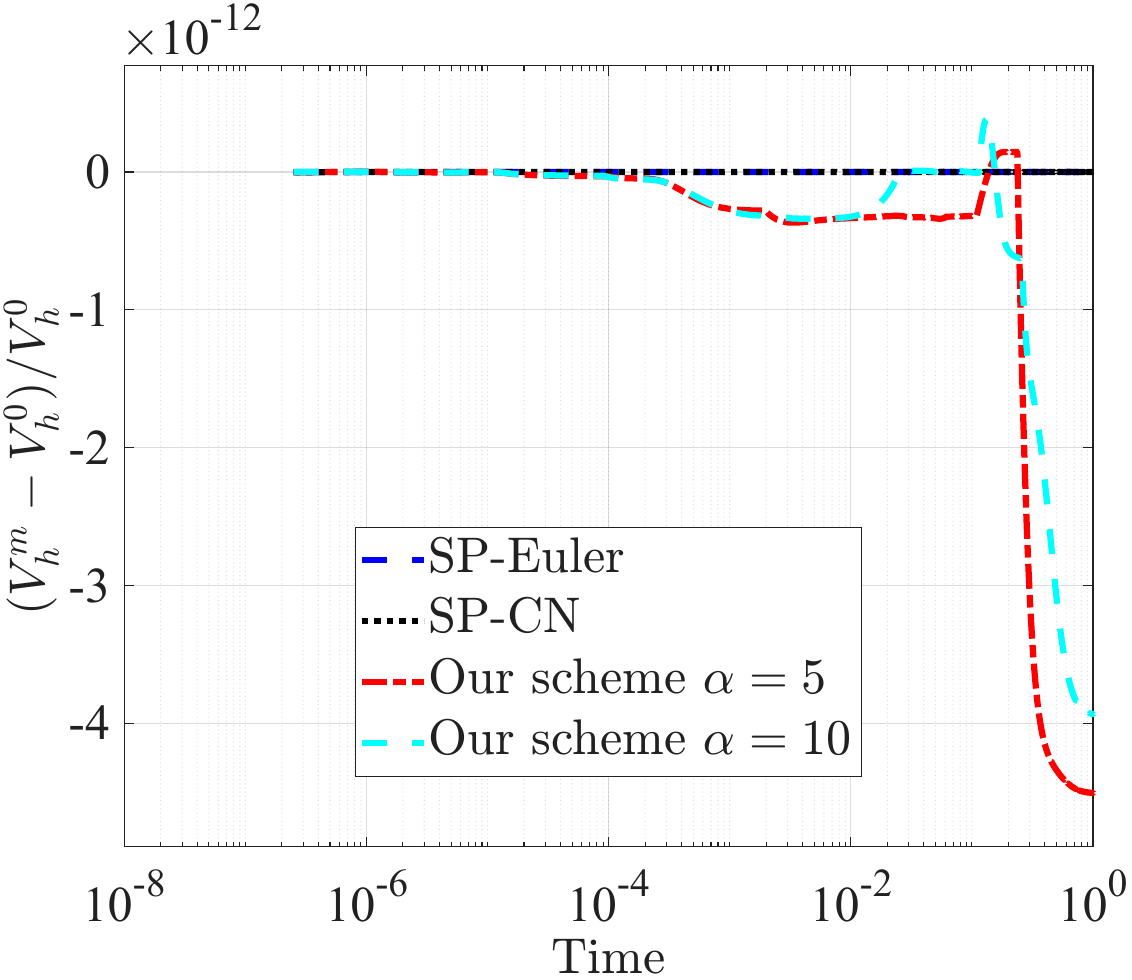}}
    \subfigure[Mesh quality]{\includegraphics[width=0.3\textwidth]{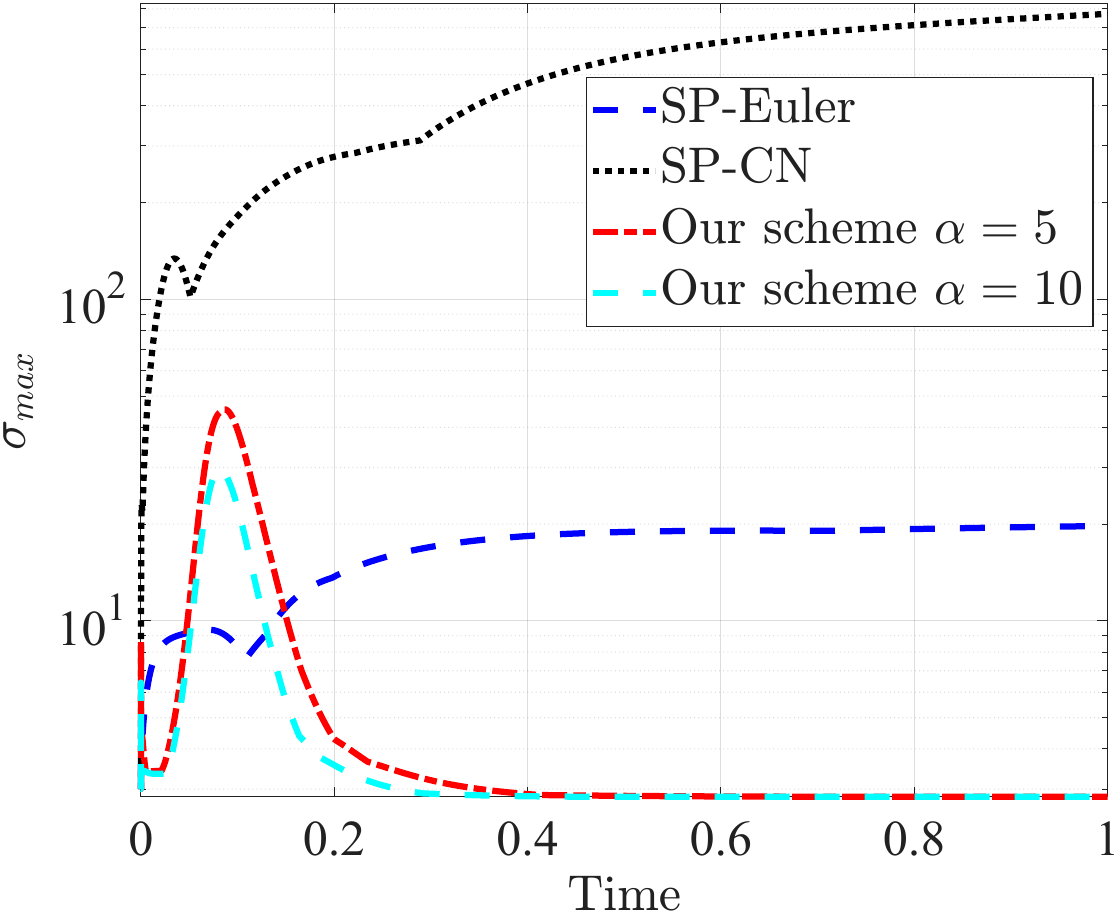}}

    \caption{Comparison of different schemes for \Cref{ex-SDF-H}.}
    \label{fig-H-cmp}
\end{figure}
\end{example}
\begin{example}\label{ex-SDF-cuboid}
    We consider the surface diffusion of a $1:1:6$ box  to compare our scheme with the SP-Euler and SP-CN schemes proposed in \cite{Garcke-Jiang-2025}. The initial mesh is shown in \Cref{fig-cuboid-initial} with $(N_p,N_T)=(804,1604)$. The time nodes are chosen as $t_m = T(\frac{m}{M})^2, m=0,1,\cdots, M$ with $T=1$ and $M=500$. The iterative tolerance is $tol=5\times 10^{-12}$.

    Numerical results of different schemes are shown in \Cref{fig-cuboid-SDF}, indicating the advantage of our schemes on mesh distribution.  The surface area, enclosed volume, and mesh quality with respect to time are plotted in \Cref{fig-cuboid-cmp} (a)-(c), respectively.  \Cref{fig-cuboid-cmp} (b) presents the relative volume loss which shows ours is $\mO(10^{-13})$ and theirs is around the machine error. This is because, as we have mentioned, we use the averaged-in-time normal vector  to implicitly maintain the volume which may get iteration error involved, while their schemes explicitly require $V_h^m=V_h^0$.  \Cref{fig-cuboid-cmp} (c) presents $\sigma_{{\rm max}}$ from which one can see that compared with their schemes our method can preserve the mesh as well as $\Gamma_h^0$.

\begin{figure}[!htp]
    \centering
    \includegraphics[width=0.4\textwidth, trim=180 290 180 240, clip]{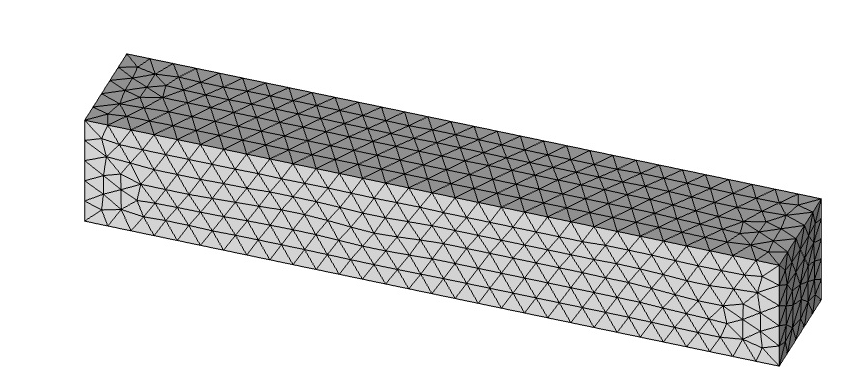}
    \caption{Initial mesh of cuboid shape for \Cref{ex-SDF-cuboid}.}\label{fig-cuboid-initial}
\end{figure}
\begin{figure}[!htp]
    \centering
    \centering
    \subfigure[SP-CN:   $t=0.0576$]{\includegraphics[width=0.38\textwidth, trim=200 260 200 260, clip]{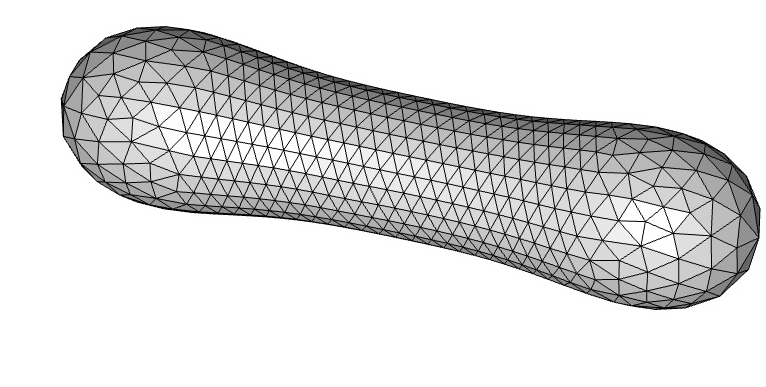}} 
    \subfigure[SP-CN:     $t=0.25$]{\includegraphics[width=0.34\textwidth, trim=200 260 200 260, clip]{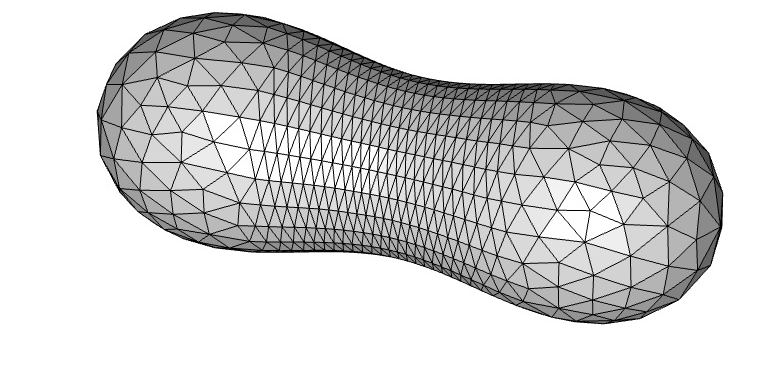}}              
    \subfigure[SP-CN:        $t=1$]{\includegraphics[width=0.26\textwidth, trim=160 160 160 150, clip]{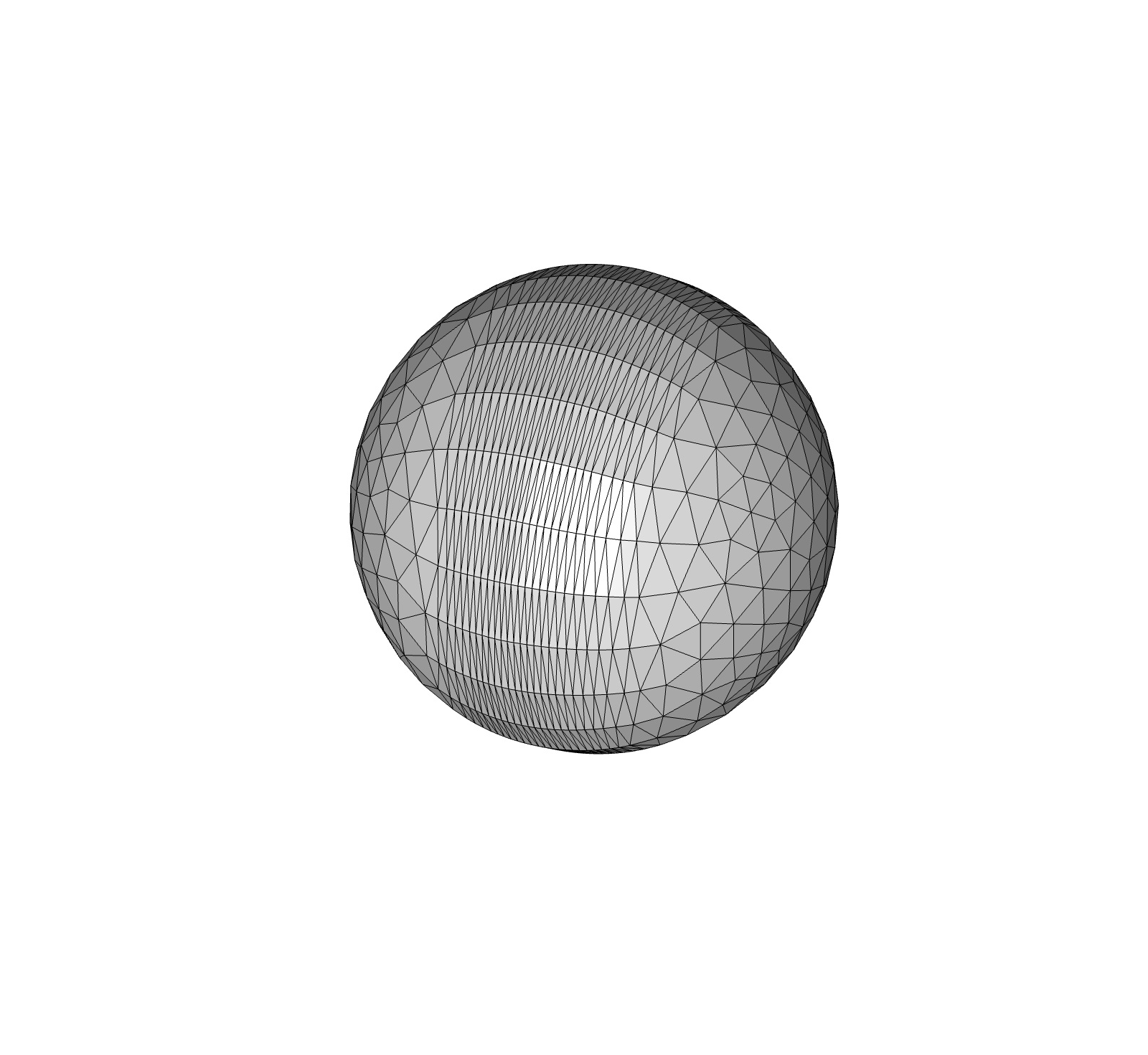}} \\
    \subfigure[Our scheme:   $t=0.0576$]{\includegraphics[width=0.38\textwidth, trim=200 250 200 250, clip]{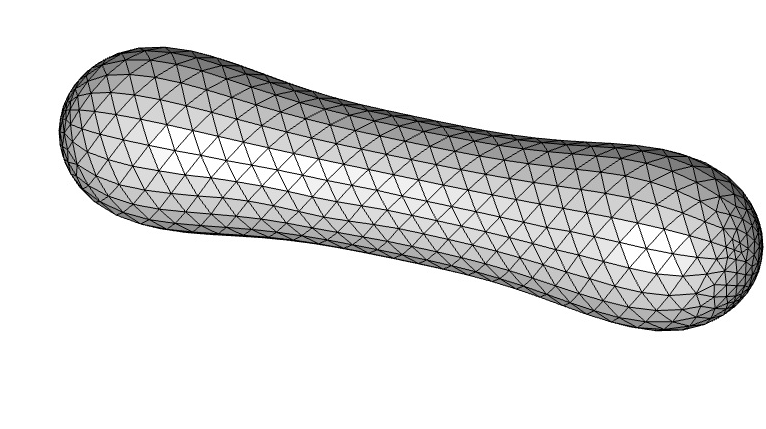}}
    \subfigure[Our scheme:     $t=0.25$]{\includegraphics[width=0.34\textwidth, trim=200 250 200 250, clip]{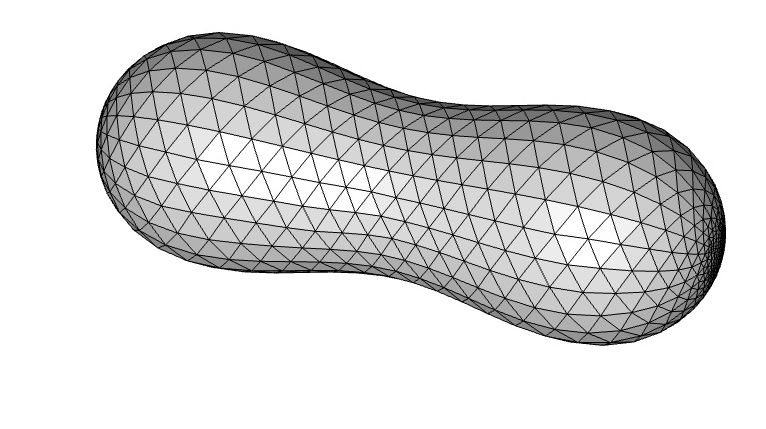}}              
    \subfigure[Our scheme:        $t=1$]{\includegraphics[width=0.26\textwidth, trim=160 160 160 100, clip]{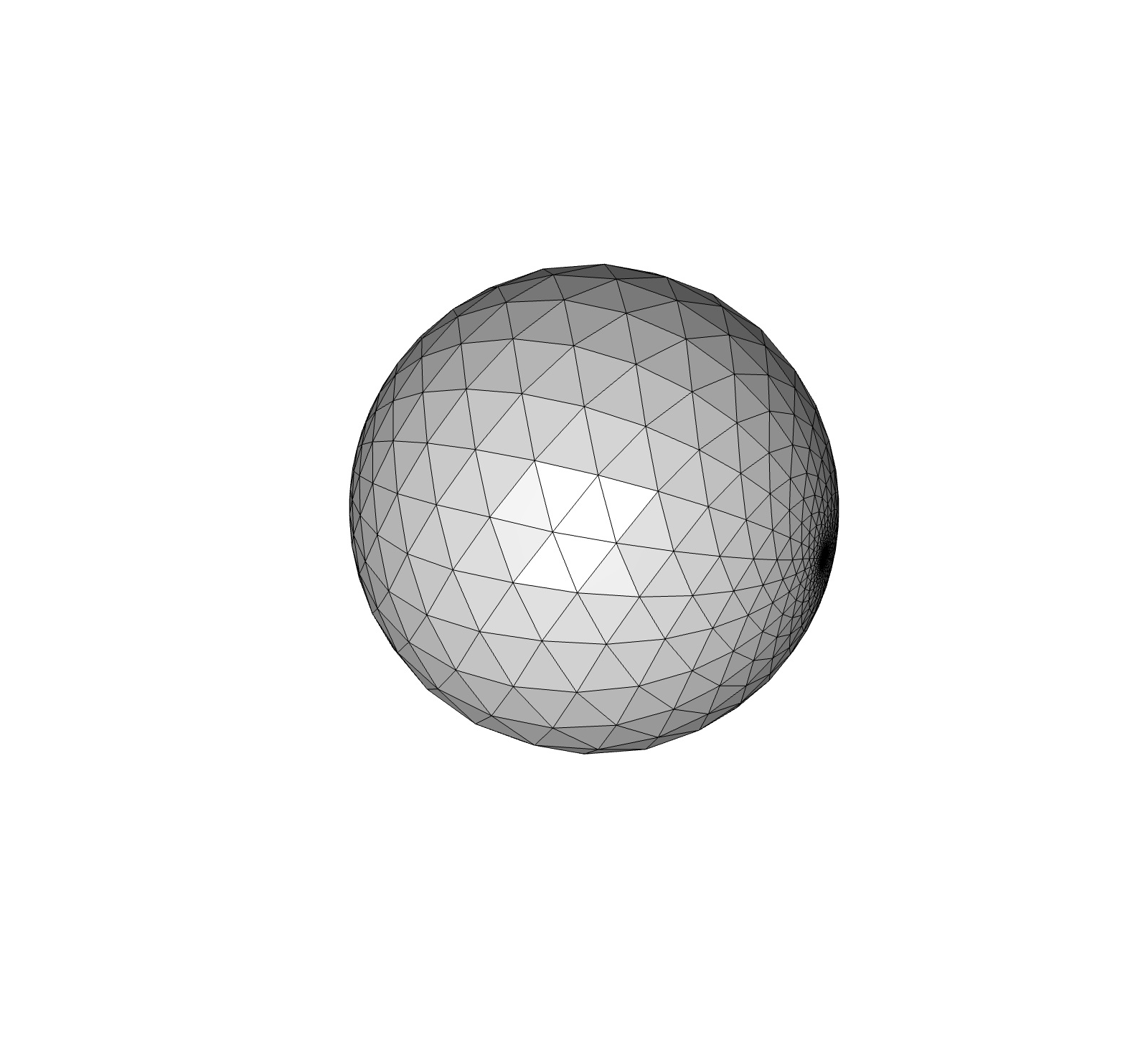}}
    \caption{Surface diffusion of a $1:1:6$ box using different schemes (SP-CN in the first row and ours with $\alpha=5$ in the second row) for \Cref{ex-SDF-cuboid}. }
    \label{fig-cuboid-SDF}
\end{figure}

\begin{figure}[!htp]
    \centering
\subfigure[Surface area        ]{\includegraphics[width=0.3\textwidth]{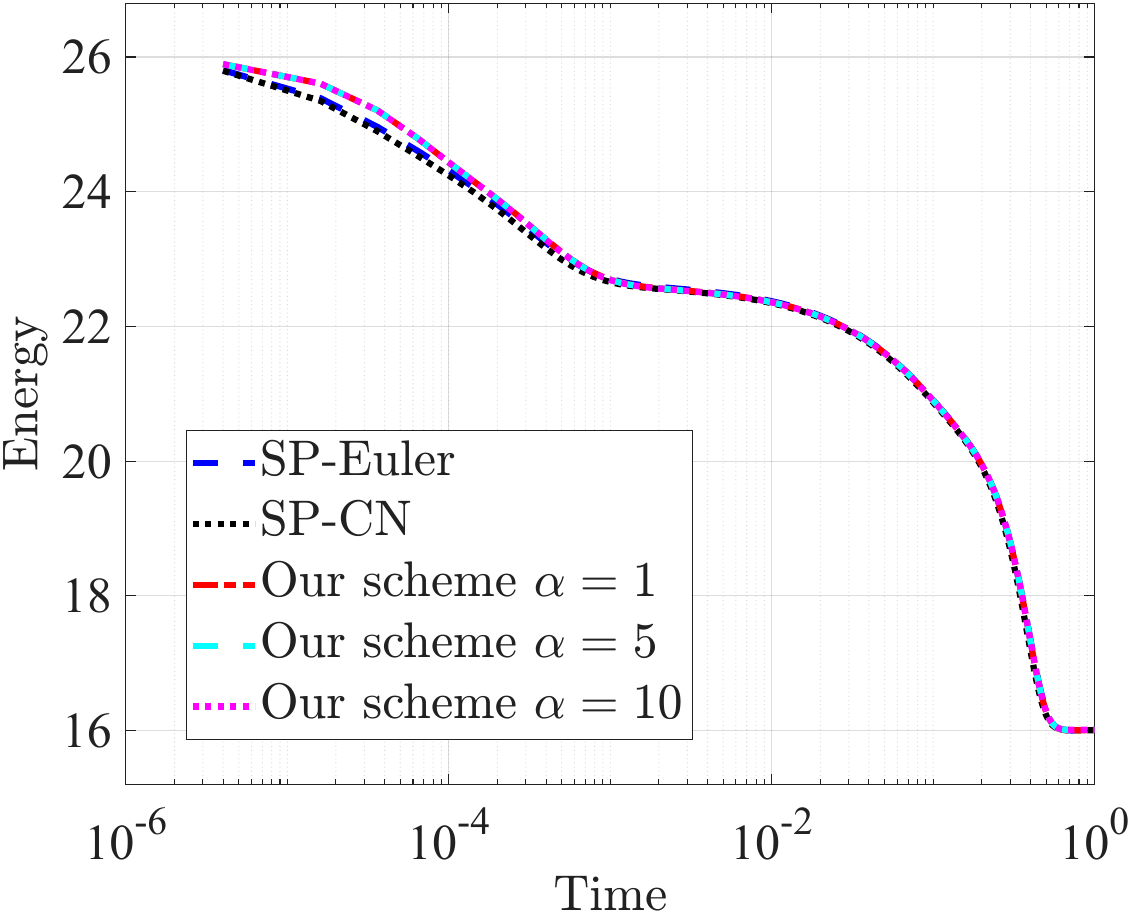}}
\subfigure[Volume                         ]{\includegraphics[width=0.3\textwidth]{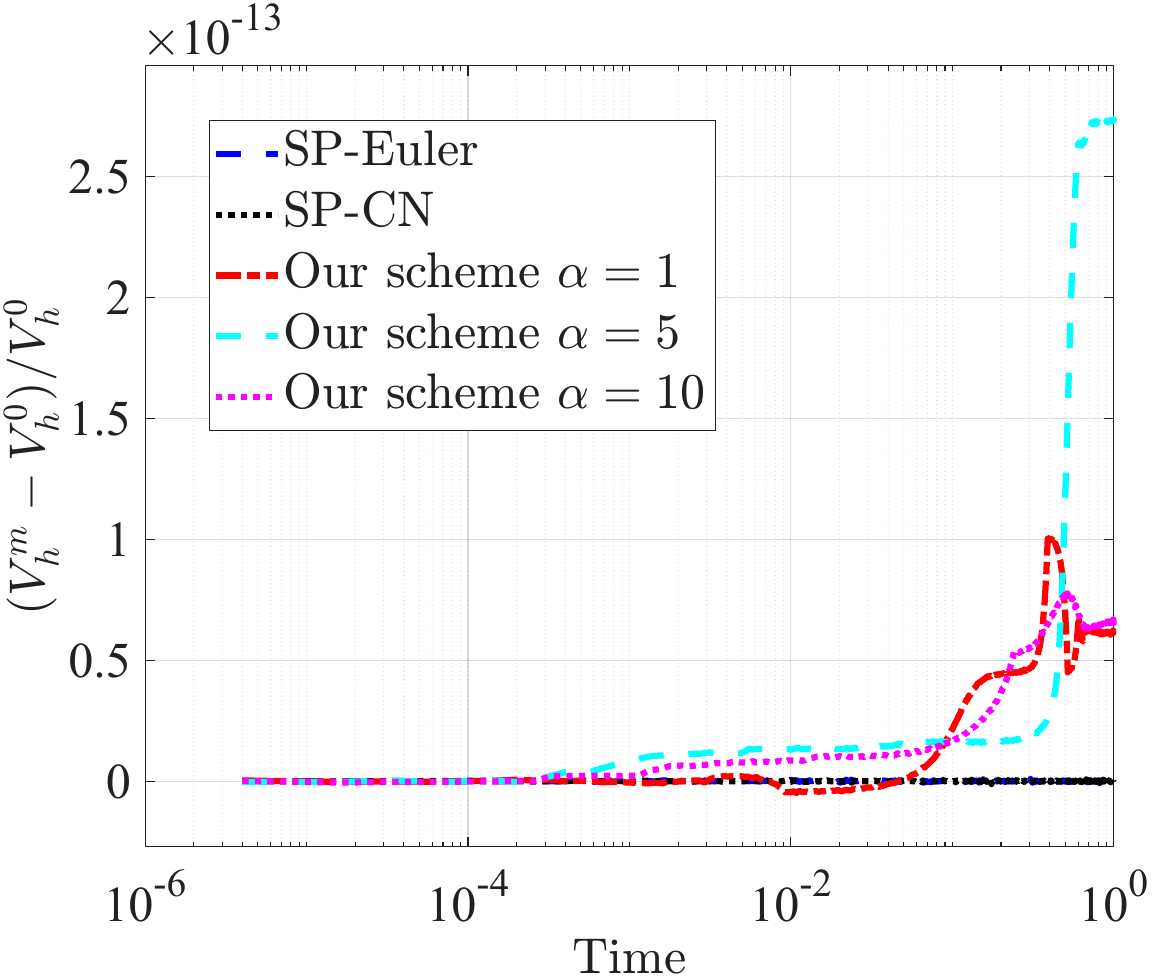}}
\subfigure[Mesh Quality        ]{\includegraphics[width=0.3\textwidth]{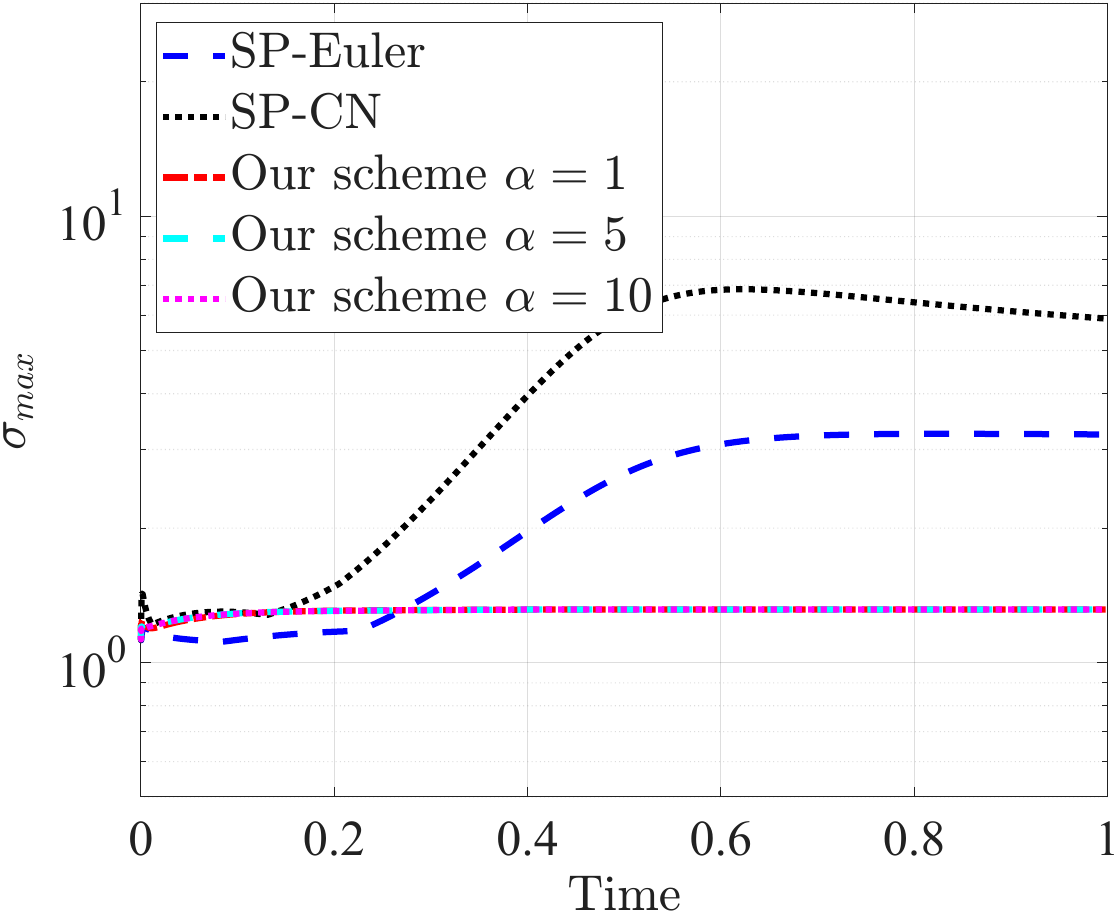}}
\caption{Comparison of different schemes on a $1:1:6$ box for \Cref{ex-SDF-cuboid}.}
\label{fig-cuboid-cmp}
\end{figure}
\end{example}

\subsection{Volume-preserving mean curvature flow}
\begin{example}\label{ex-con-MCF}
    We first examine the temporal convergence rates of scheme \eqref{CN-scheme-VCMCF}. The same reference and initial surfaces as in \Cref{ex-con_SDF} are used, and ${\rm Error_t}(t;\tau)$ is computed with $tol=5\times 10^{-12}$ for different time step sizes at $t=1$. The convergence rates are then calculated by \eqref{rate_t} and listed in \Cref{con-t-ex_MCF}, from which a second-order temporal accuracy is observed. Next, we adopt the same procedure as in the case of surface diffusion  to test the convergence rate of $\lambda_h$ with respect to the mesh size of $\Gamma_h^0$ at $t=1$. The mesh sequence is given in \eqref{sq-mesh}, and the step size is fixed as $\tau=2\times 10^{-3}$. \Cref{con-h-lam-MCF} indicates that the convergence rate of $\lambda_h$ is $\mO(h_0^2)$.

    \begin{table}[htp!]
	\centering
	\caption{\Cref{ex-con-MCF}: {${\rm Error}_t(1;\tau)$} under different $\tau$, $\tau_0=2\times 10^{-2}$.}\label{CT-MCF-t}
	\begin{tabular}{cccccc}
		\toprule
		$\tau$             & $\tau_0$        & $\tau_0/2$     & $\tau_0/4$   & $\tau_0/8$ & $\tau_0/16$\\ 
		\toprule
		Error                & 8.451e-05  & 2.131e-05 &  5.355e-06  & 1.342e-06  & 3.358e-07\\ 
		Convergence rate      & --     &1.99 &   2.00 & 2.00  & 2.00   \\ 
		\bottomrule
	\end{tabular}\label{con-t-ex_MCF}

\end{table}    
\begin{table}[htp!]
\centering
\caption{\Cref{ex-con-MCF}: {${\rm Error}_\lambda(h)$} under different spatial meshes at $t=1$.}
\begin{tabular}{ccccc}
	\toprule
	$h_0$             &  2.35e-01  &   1.44e-01  &   1.05e-01  &  8.55e-02 \\ 
	\toprule
	Error    &5.330e-03   & 2.676e-03  &  1.362e-03  & 8.545e-04 \\ 
	Convergence rate  & --     &1.40 &   2.15  & 2.33    \\ 
	\bottomrule
\end{tabular}\label{con-h-lam-MCF}
\end{table}    
\end{example}

\begin{example}\label{ex-Torus-MCF}
    We consider the evolution of a flower type torus defined by the parameterization
    \begin{equation}\label{eq-torus-65}
        \vx=\left[
        \begin{matrix}
            (1+0.4\cos\varphi)\cos\theta \\
            (1+0.4\cos\varphi)\sin\theta \\
            0.4\sin\varphi+0.2\sin6\theta
        \end{matrix}
        \right],
        \quad \theta\in [0,2\pi) ,\quad\varphi \in [0,2\pi) . 
    \end{equation}
We take $\Gamma(0)=\{\vx:=[x_1,x_2,x_3]\}$ as the image of the following mapping defined on a $(1,0.4)$-torus $\cM$
\begin{equation}\label{torus-flower}
    x_1=z_1,\quad x_2=z_2,\quad  x_3=z_3+0.2\sin\left(6\arctan\frac{z_2}{z_1}\right),\qquad [z_1,z_2,z_3]\in \cM.
\end{equation}
The initial triangulation $\Gamma_h^0$ is constructed by mapping the triangulated $(1,0.4)$-torus through \eqref{torus-flower}. See \Cref{fig-Torus-MCF} (a) for the reference surface $\cM_h$ and \Cref{fig-Torus-MCF} (b) for the initial surface $\Gamma_h^0$, where 
 $(N_p, N_T)=(4217,8434)$. \Cref{fig-Torus-MCF} (c)-(f) display the numerical surfaces at $t=0.02$, $t=0.05$, $t=0.1$ and $t=0.2$, respectively. The initially undulating surface gradually flattens and eventually develops a singularity at the center. The energy-decaying and volume-preserving properties are verified in \Cref{fig-Torus-MCf-EVI} (a) and (b), respectively. \Cref{fig-Torus-MCf-EVI} (c) presents the mesh quality index $\sigma_{\rm max}$ with respect to time, indicating that good mesh quality is maintained for evolution far from singularities. In fact, $\sigma_{{\rm max}}(0.2)\approx \sigma_{{\rm max}}(0)=19.58$ for $\alpha=5$ or $10$. 
\begin{figure}[!htp]
    \centering
    \subfigure[Reference surface]{\includegraphics[width=.31\textwidth,trim=0 200 0 190, clip]{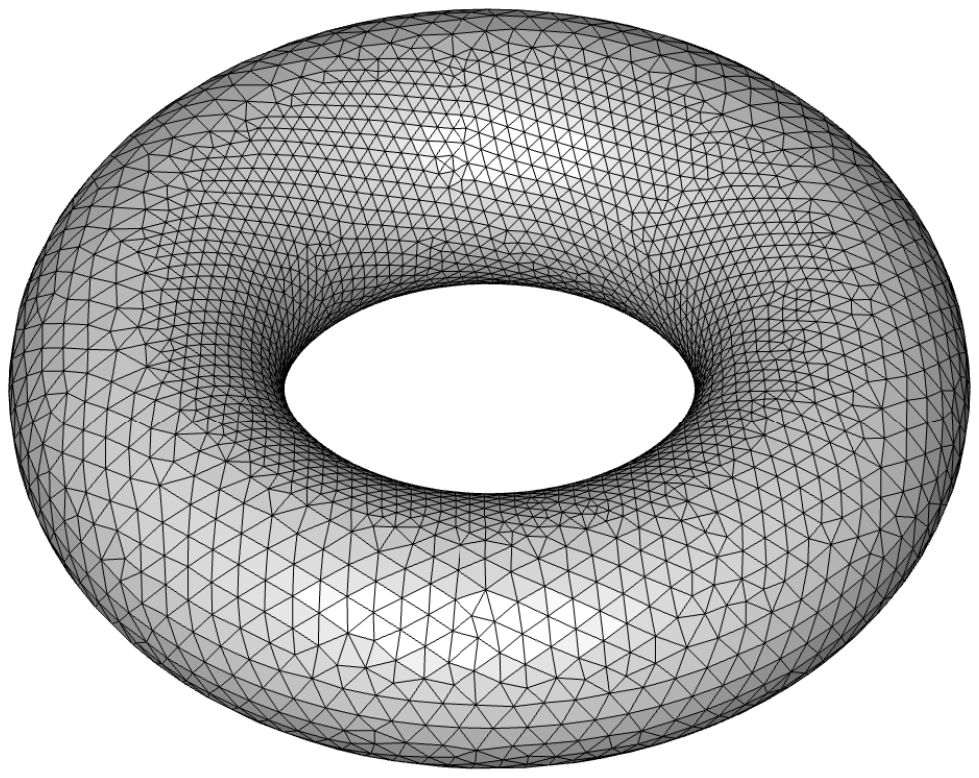}}
    \subfigure[Initial surface]{\includegraphics[width=.31\textwidth,trim=0 200 0 190, clip]{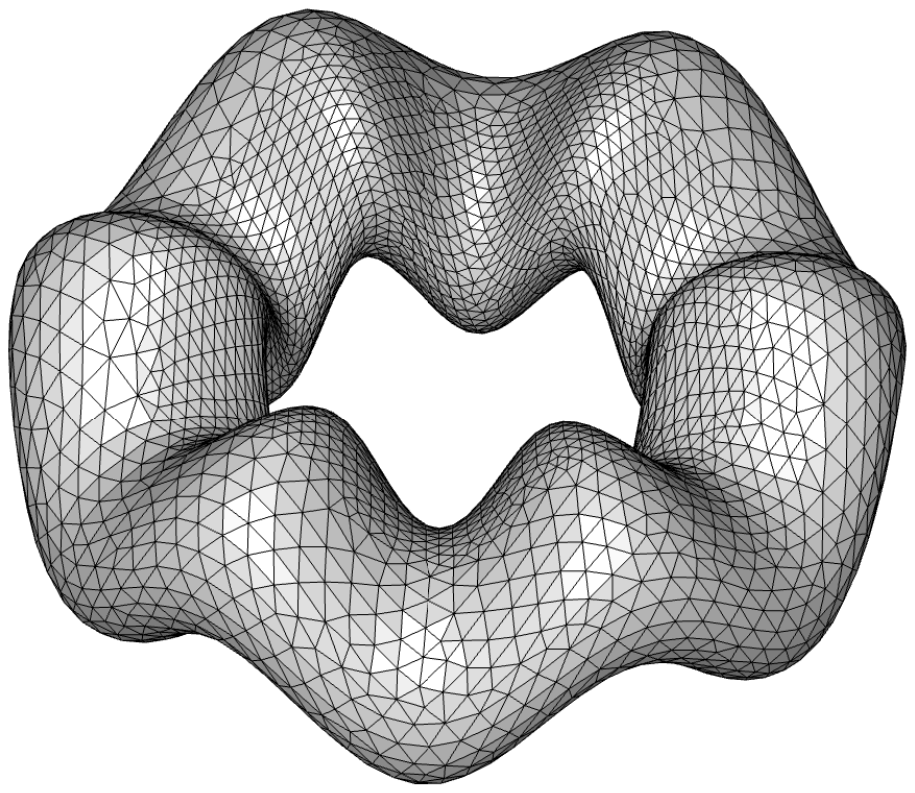}}
    \subfigure[$t=0.02$]{\includegraphics[width=.31\textwidth,trim=0 200 0 190, clip]{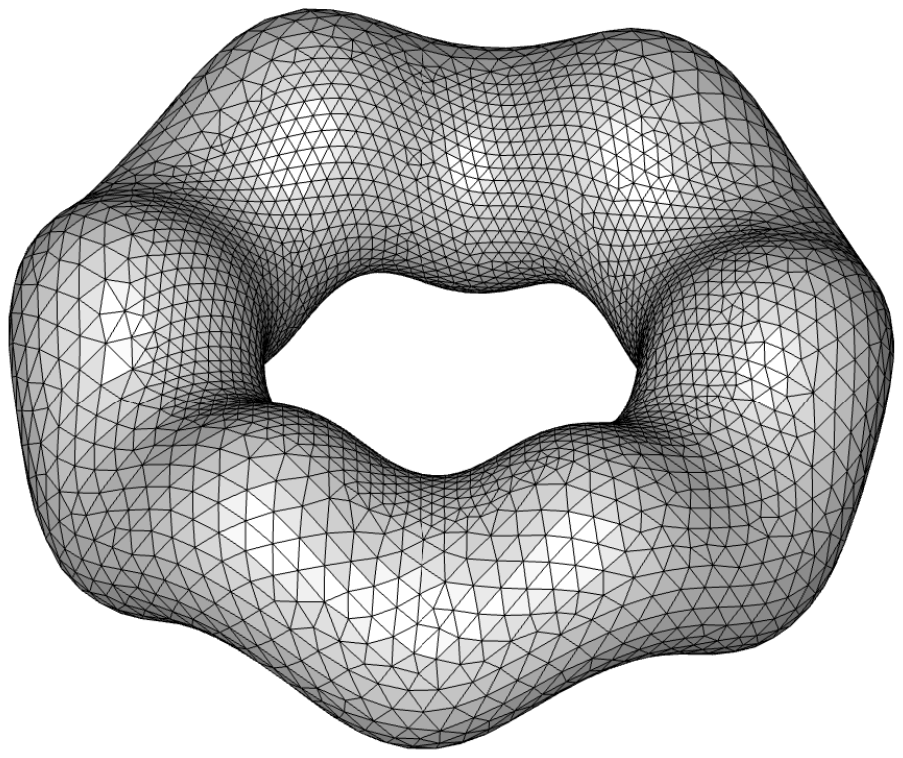}}\\
    \subfigure[$t=0.05$]{\includegraphics[width=.31\textwidth,trim=0 200 0 190, clip]{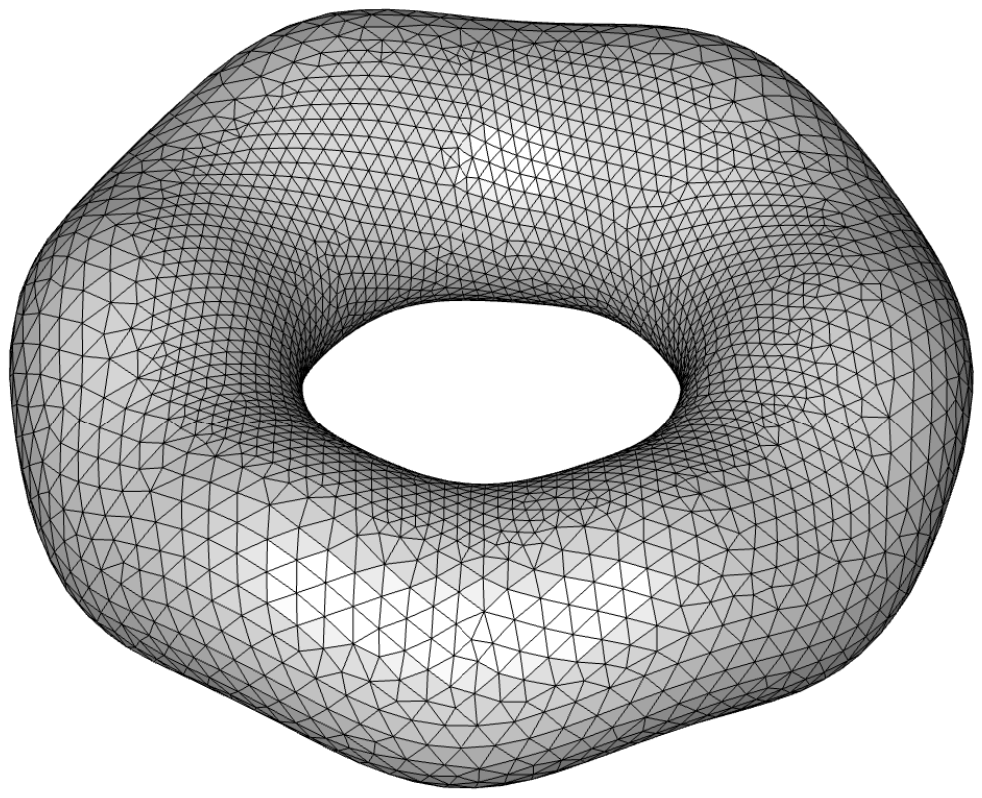}}
    \subfigure[$t=0.1$]{\includegraphics[width=.31\textwidth,trim=0 200 0 190, clip]{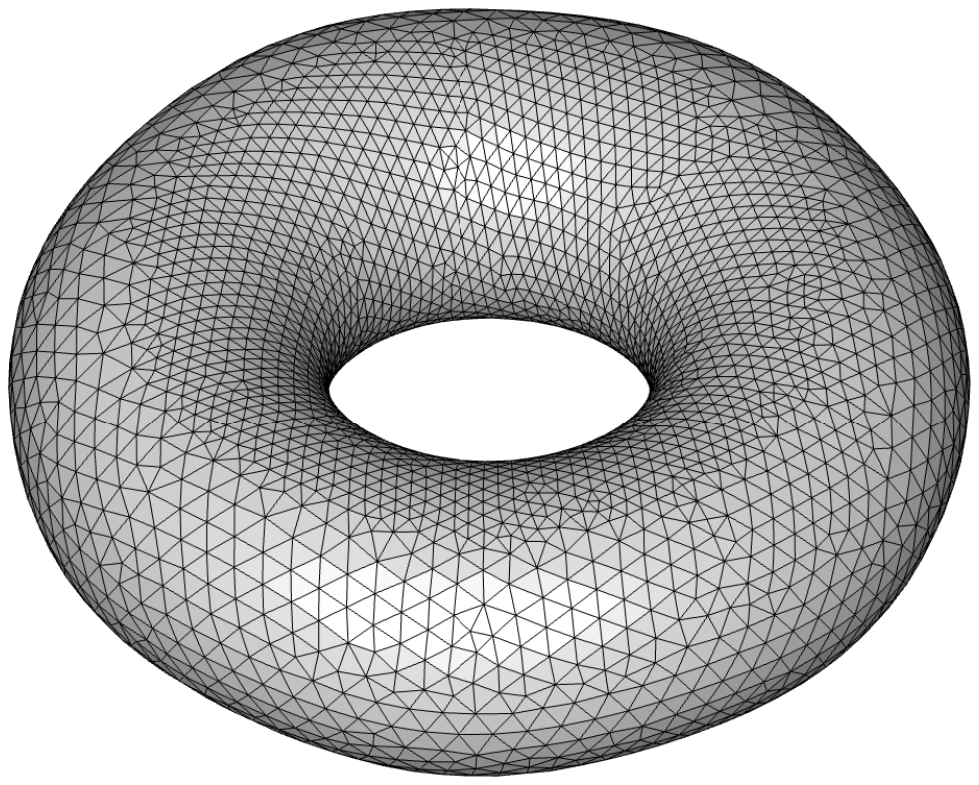}}
    \subfigure[$t=0.2$]{\includegraphics[width=.31\textwidth,trim=0 200 0 190, clip]{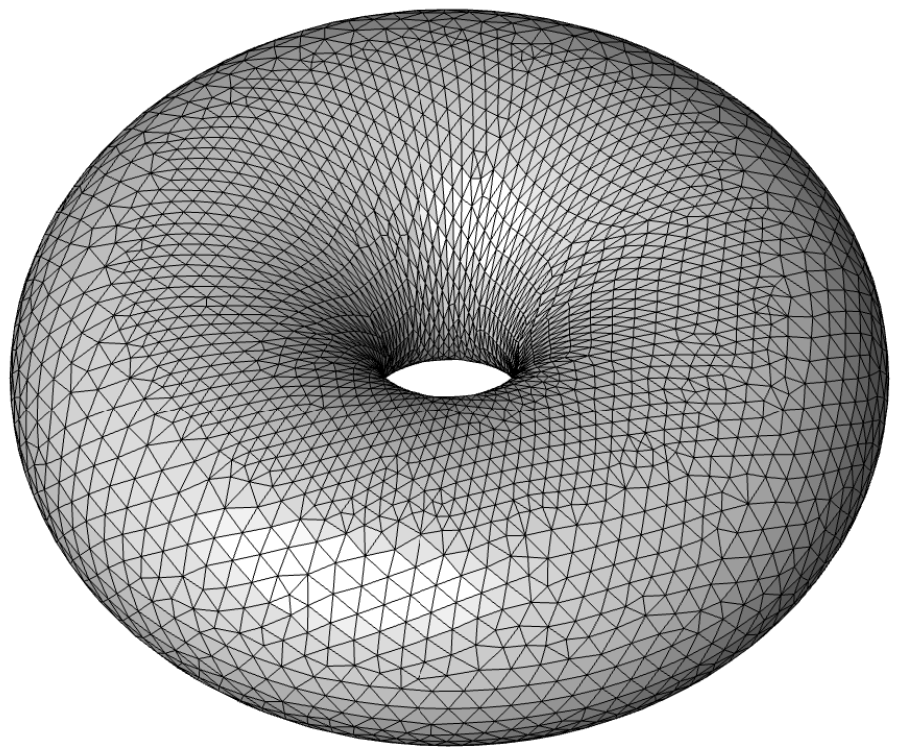}}
    \caption{Reference surface (a), initial surface (b). Plots (c)-(f) are numerical surfaces at $t=0.02$, $t=0.05$, $t=0.1$ and $t=0.2$, respectively.}
    \label{fig-Torus-MCF}
\end{figure}
\begin{figure}[!htp]
    \centering
    \subfigure[Surface area]{\includegraphics[width=.32\textwidth,trim=0 200 0 200, clip]{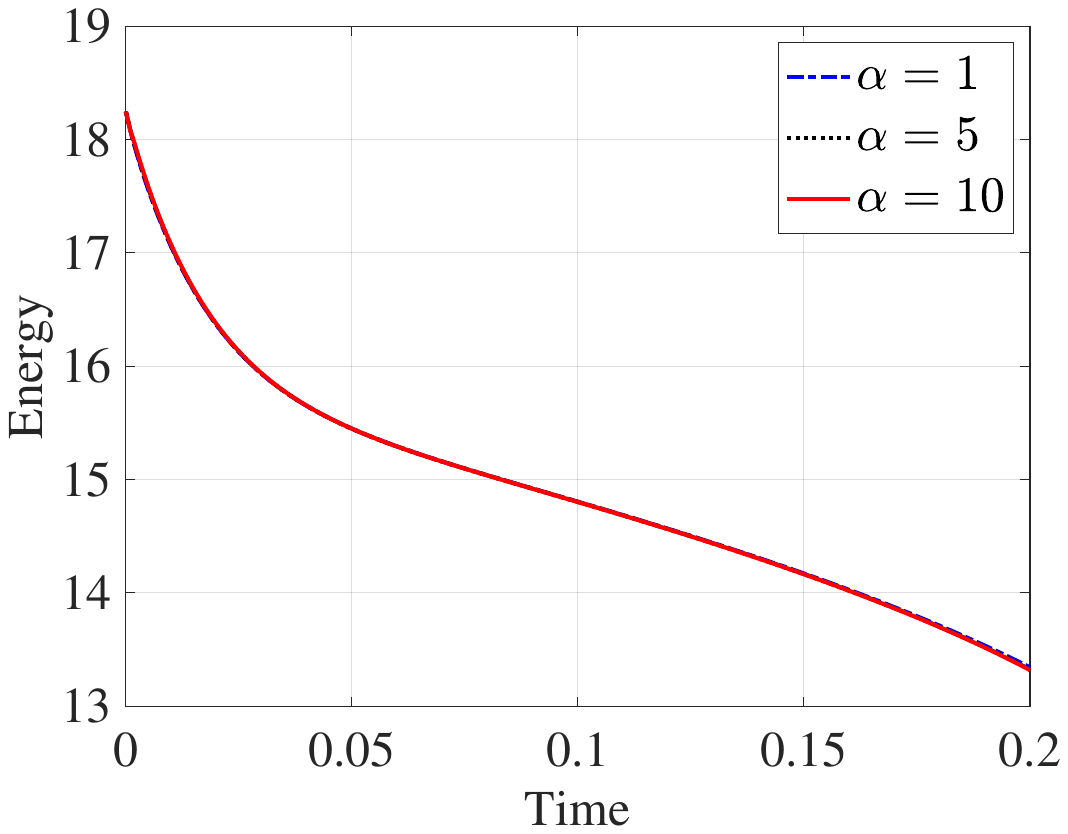}}
    \subfigure[Volume ]{\includegraphics[width=.32\textwidth,trim=0 200 0 200, clip]{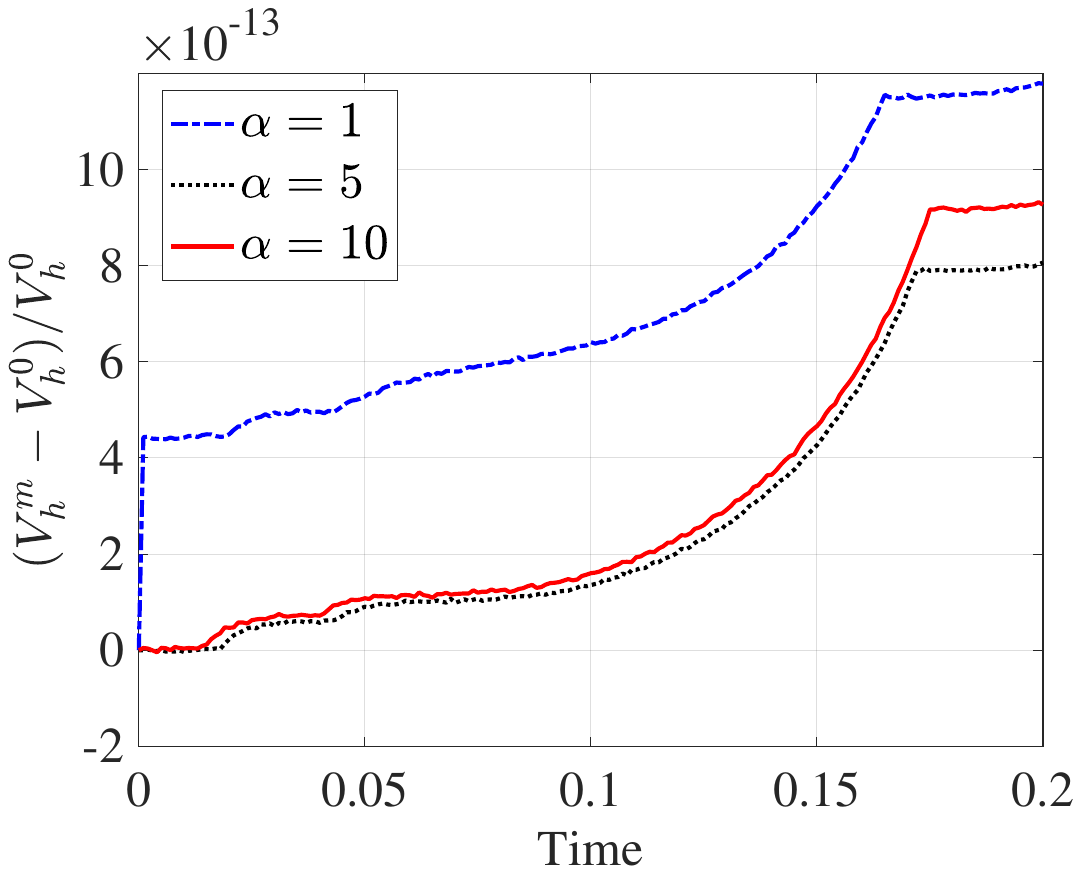}}
    \subfigure[Mesh quality]{\includegraphics[width=.32\textwidth,trim=0 200 0 200, clip]{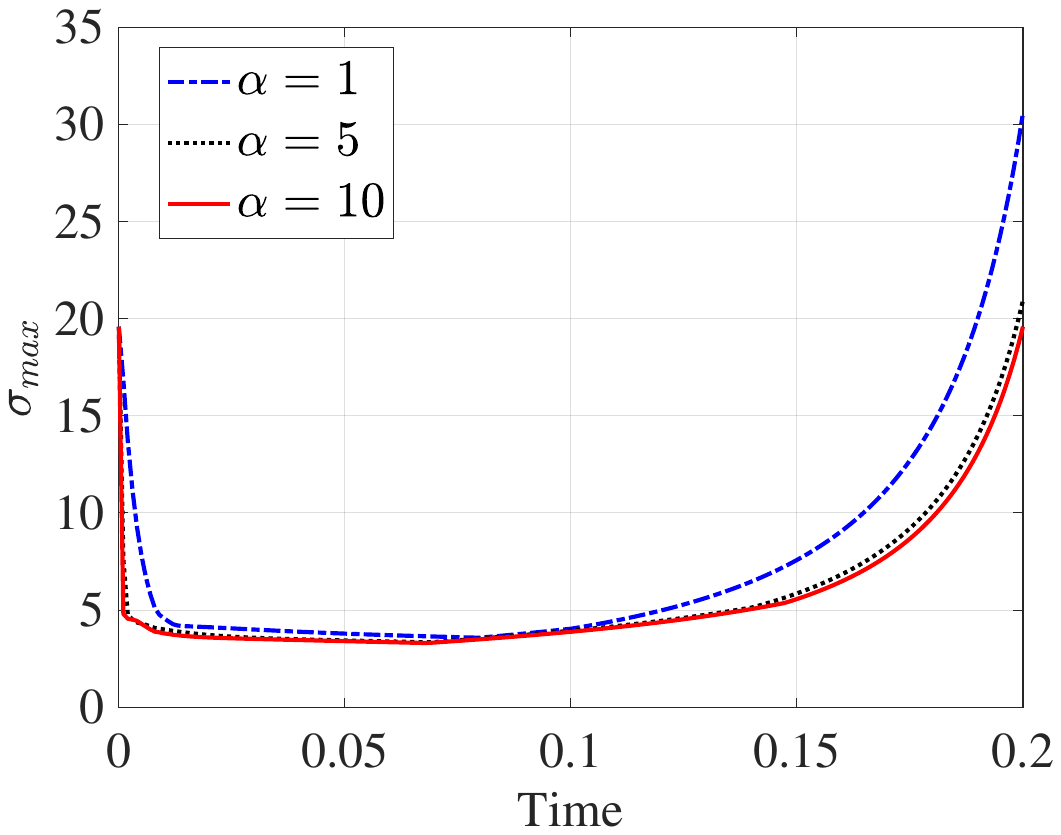}}
    \caption{Surface area with respect to time (a), relative volume loss (b)  and mesh quality (c) for \Cref{ex-Torus-MCF} with  $\tau=1\times 10^{-3}$ and $tol=5\times 10^{-12}$.}
    \label{fig-Torus-MCf-EVI}
\end{figure}
\end{example}
\begin{example}\label{ex-box-MCF}
    We consider the evolution of a $1:1:10$ box. The initial mesh is given in \Cref{fig-box-MCF} (a) with $(N_p,N_T)=(1346,2688)$. We take the initial mesh as $\cM_h$ and run our scheme by setting $\alpha=5$, $t_m=T\big(\frac{m}{M})^2,\,m=0,1,\cdots,M$ and $tol=5\times 10^{-12}$ with $T=5,\,M=1000$. \Cref{fig-box-MCF} (b)-(e) presents the numerical solutions at various time. Since the initial surface is uniformly convex, we can see no topological change occurs.   \Cref{fig-box-MCf-EVI} plots the energy, relative volume loss and $\sigma_{{\rm max}}$ under $\alpha=1,\,5,\, 10$. One can see that despite the large deformation of the surface,  different $\alpha$ does not have significant influence on the evolution in this example. The relative volume loss is on the same level with the iterative tolerance. The robustness of our scheme on mesh distribution is again verified. 
    \begin{figure}[!htp]
        \centering
        \subfigure[Initial surface]{\includegraphics[width=.31\textwidth,trim=0 250 0 250, clip]{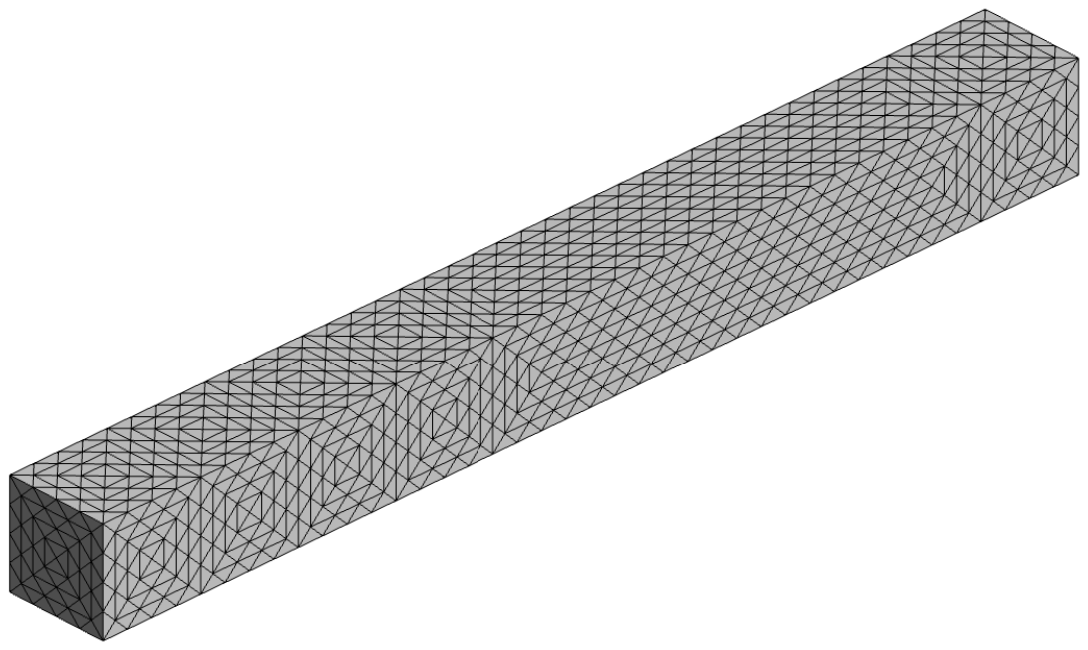}}\\
        \subfigure[$t=0.3125$]{\includegraphics[width=.26\textwidth,trim=20 90 20 80, clip]{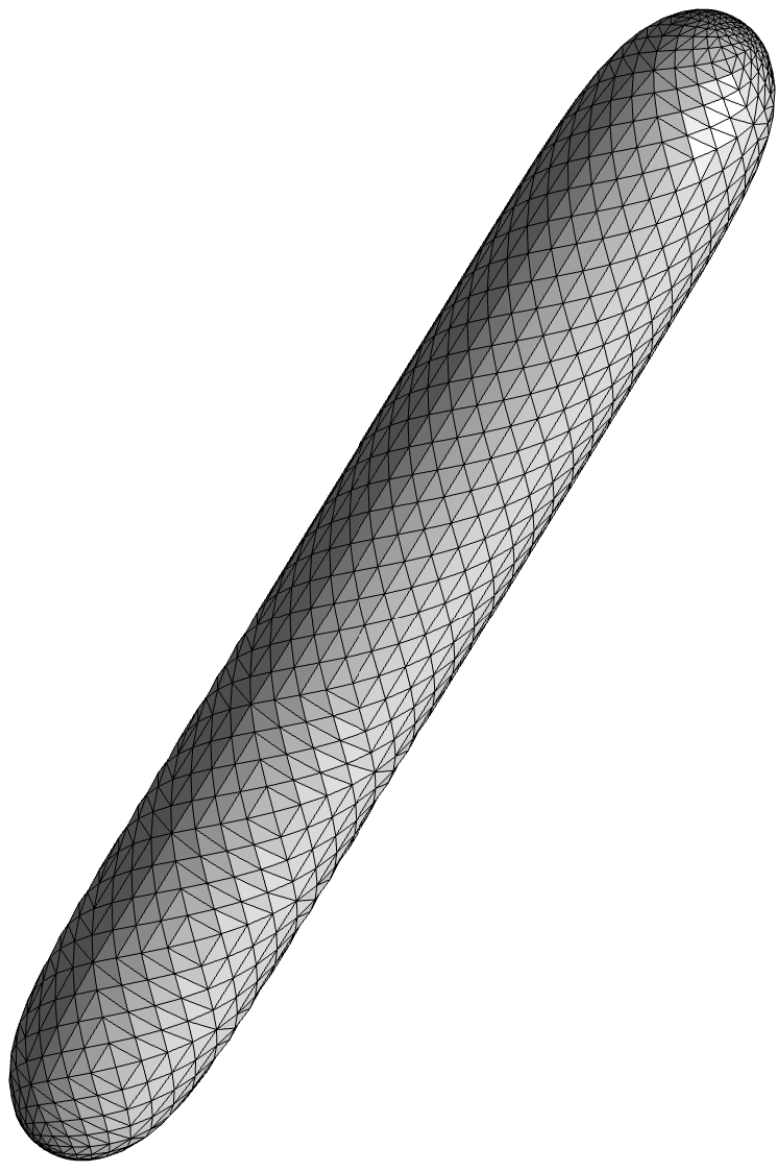}}
        \subfigure[$t=0.8$]{\includegraphics[width=.22\textwidth,trim=20 90 20 80, clip]{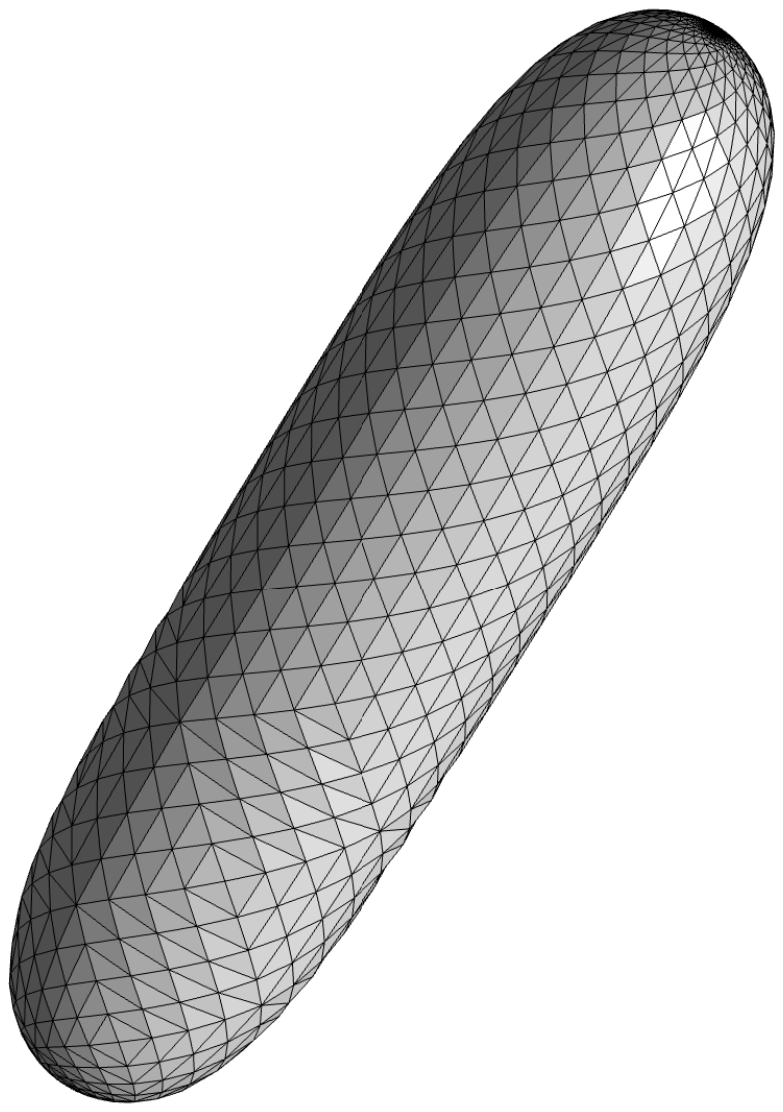}}
        \subfigure[$t=1.8$]{\includegraphics[width=.16\textwidth,trim=0 90 0 40, clip]{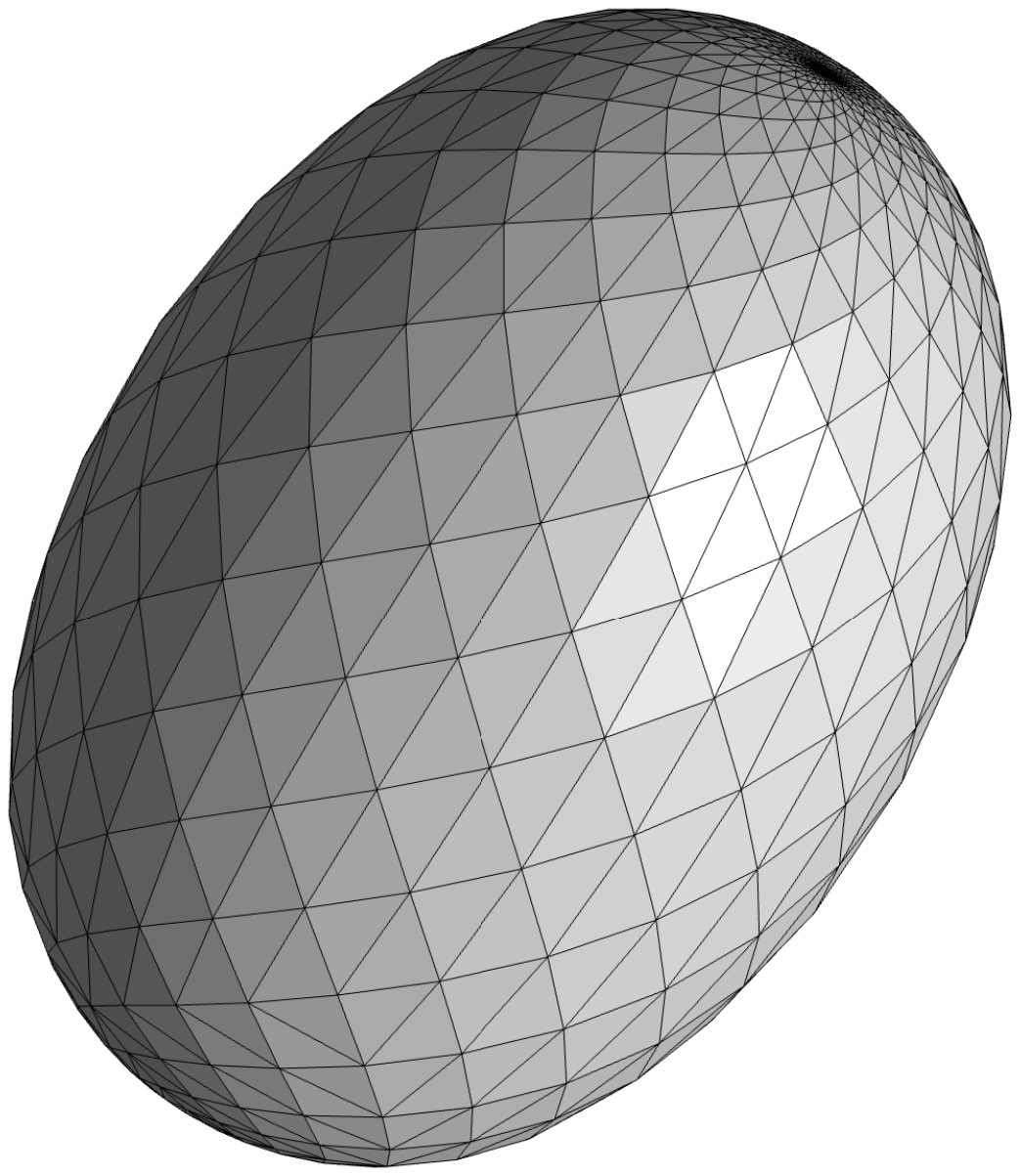}}
        \hspace{1.0cm}
        \subfigure[$t=5$]{\includegraphics[width=.16\textwidth,trim=0 90 0 40, clip]{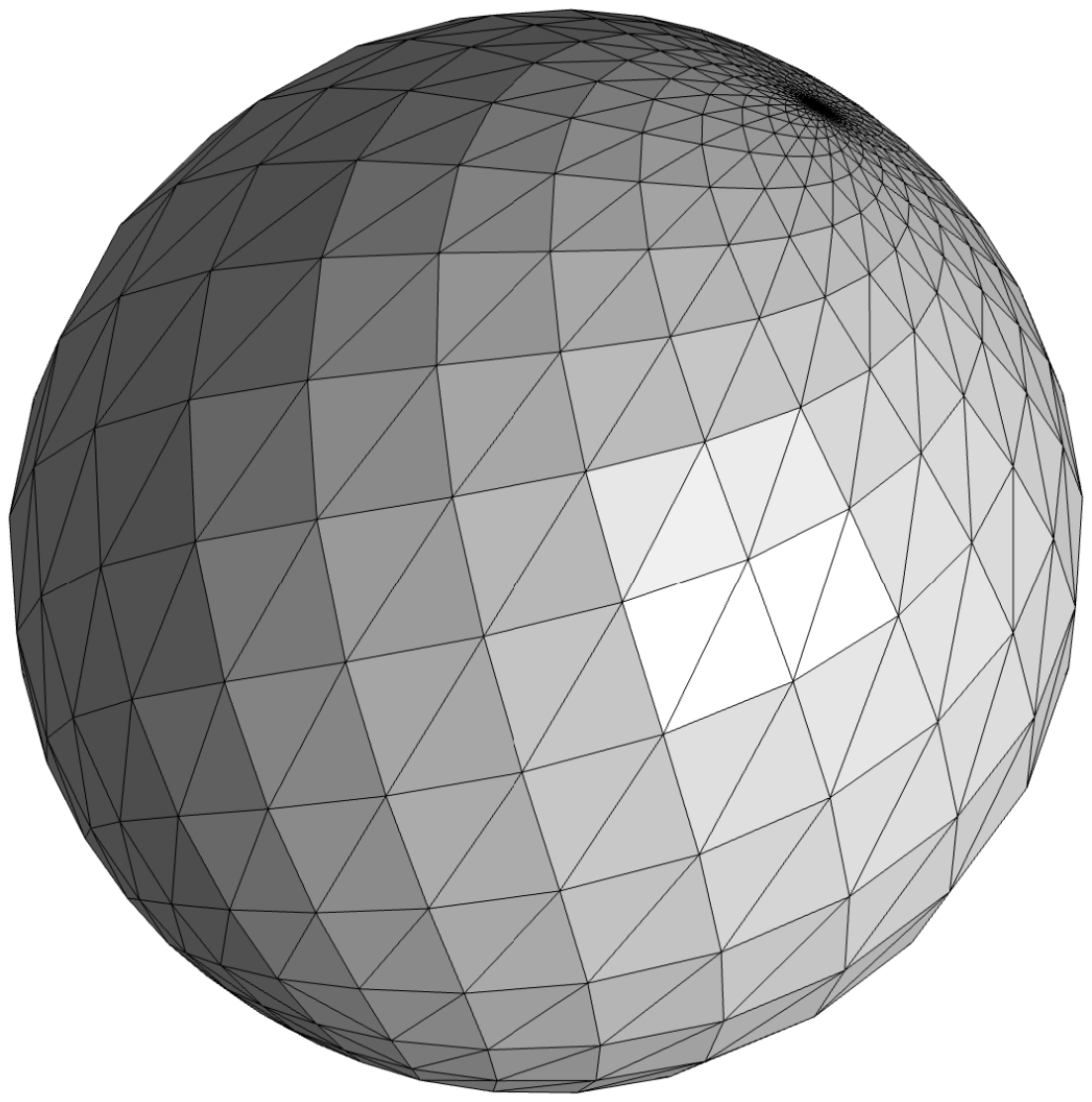}}
        \caption{Initial surface (a),  Plots (b)-(e) are numerical surfaces at $t=0.3125$, $t=0.8$, $t=1.8$ and $t=5$.}
        \label{fig-box-MCF}
    \end{figure}

\begin{figure}[!htp]
    \centering
    \subfigure[Surface area]{\includegraphics[width=.32\textwidth,trim=0 200 0 200, clip]{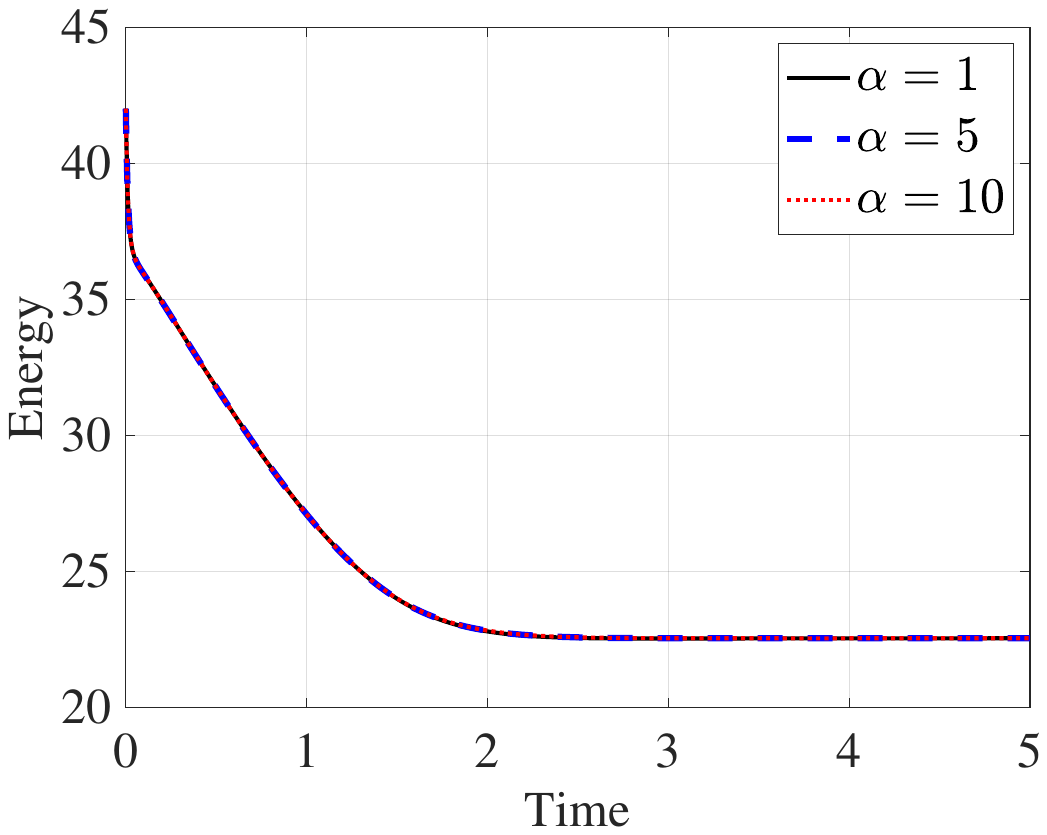}}
    \subfigure[Volume]{\includegraphics[width=.32\textwidth,trim=0 200 0 200, clip]{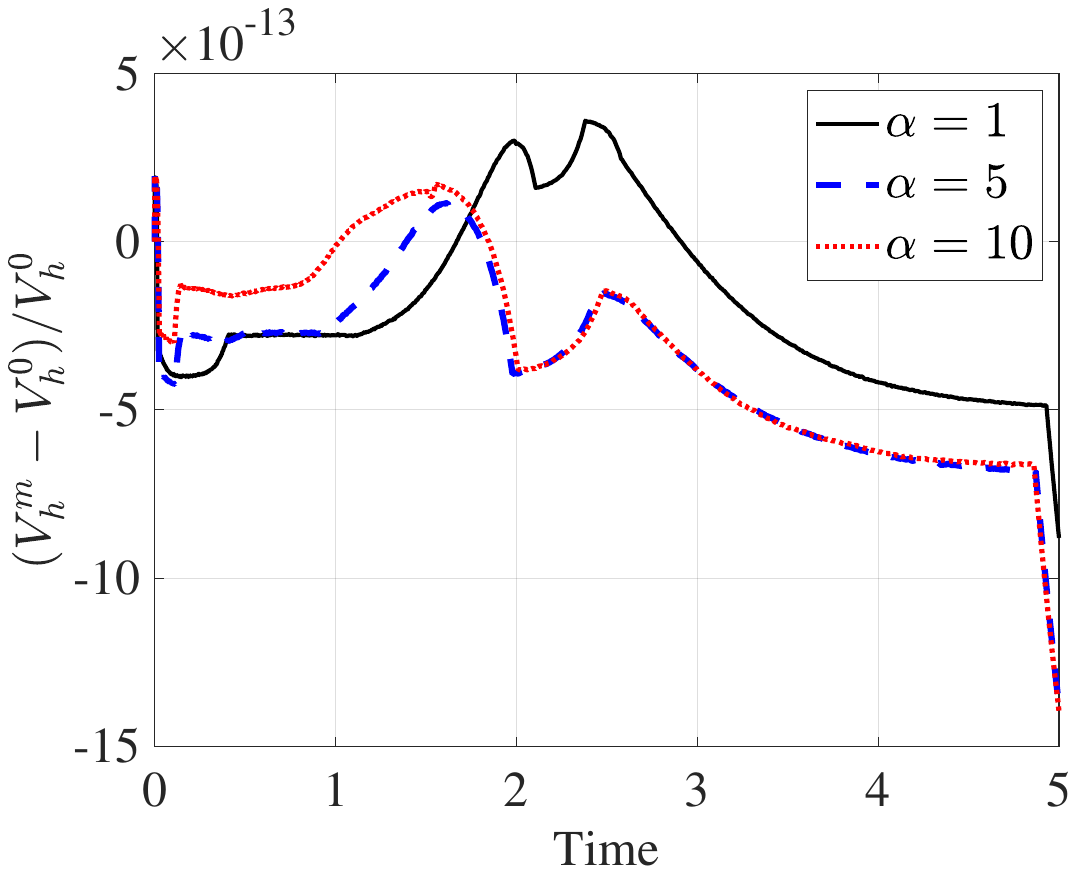}}
    \subfigure[Mesh quality]{\includegraphics[width=.32\textwidth,trim=0 200 0 200, clip]{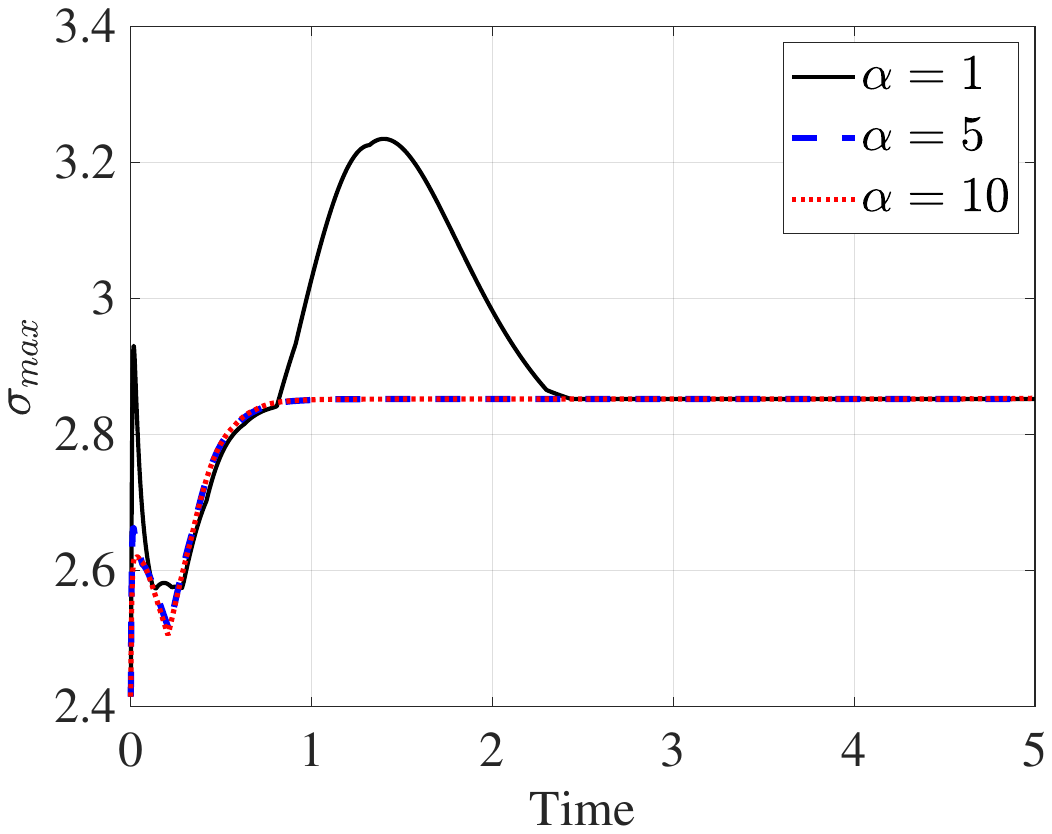}}
    \caption{Surface area with respect to time (a), relative volume loss (b)  and mesh quality (c) for \Cref{ex-box-MCF} with $tol=5\times 10^{-12}$.}
    \label{fig-box-MCf-EVI}
\end{figure}
\end{example}
\section{Conclusion}
In this paper, we have presented a structure-preserving Crank-Nicolson method for surface diffusion and the  volume-preserving mean curvature flow. The discretization is based on a reformulation of the original evolution equation, in which the tangential motion is governed by a harmonic map heat flow. Since the heat flow decreases the harmonic energy continuously, the mesh can always be maintained well. Volume conservation is ensured by a careful design of the scheme itself, and the energy decrease is enforced via a Lagrange multiplier. Numerical experiments verified the structure-preserving properties and also demonstrated the advantages of our scheme in terms of mesh distribution.

\bibliographystyle{siamplain}
\bibliography{References_GFs}    
\end{document}